\definecolor{newcolor}{rgb}{.8,.349,.1}
\def\R{\mathbb{R}}
\def\N{\mathbb{N}}
\def\Omegat{\widetilde{\Omega}}
\def\Pt{\widetilde{\bm{P} }}
\def\l{\left\langle}
\def\r{\right\rangle}
\def\1{\chi}
\newcommand{\pf}[2]{\frac{\partial #1 }{\partial #2}}
\newtheorem{theorem}{Theorem}[section]
\newtheorem{lemma}[theorem]{Lemma}
\newtheorem{definition}[theorem]{Definition}
\newtheorem{remark}{Remark}
\begin{document}


\begin{frontmatter}

\title{Upwind summation by parts finite difference methods for large scale elastic wave simulations in 3D complex geometries \tnoteref{tnote1}}%
\tnotetext[tnote1]{Upwind SBP FD methods for large scale elastic wave simulations in 3D complex geometries}

\author[1]{Kenneth {Duru}\corref{cor1}}
\cortext[cor1]{Corresponding author: 
  Tel.: +0-000-000-0000;  
  fax: +0-000-000-0000;}
  \ead{kenneth.duru@anu.edu.au}
\author[1]{Frederick  {Fung}}
\ead{shilufred.feng@anu.edu.au}
\author[1]{Christopher {Williams}}
\ead{Christoper.Williams@anu.edu.au}

\address[1]{Mathematical Sciences Institute, Australian National University, Australia.}


\begin{abstract}
High-order accurate summation-by-parts (SBP) finite difference (FD) methods constitute efficient numerical methods for simulating large-scale hyperbolic wave propagation problems. Traditional SBP FD operators that approximate first-order spatial derivatives with central-difference stencils often have spurious unresolved numerical wave-modes in their computed solutions. Recently derived high order accurate upwind SBP operators based non-central (upwind) FD stencils have the potential to suppress these poisonous spurious wave-modes on marginally resolved computational grids. In this paper, we demonstrate that not all high order upwind SBP FD operators are applicable. Numerical dispersion relation analysis shows that odd-order upwind SBP FD operators also support spurious unresolved high-frequencies on marginally resolved meshes. Meanwhile, even-order upwind SBP FD operators (of order $2, 4, 6$) do not support spurious unresolved high frequency wave modes and also have better numerical dispersion properties.

For all the upwind SBP FD operators we discretise the three space dimensional (3D) elastic wave equation on boundary-conforming curvilinear meshes.
Using the energy method we prove that the semi-discrete approximation is stable and energy-conserving.
We derive a priori error estimate and prove the convergence of the numerical error.
Numerical experiments for  the 3D elastic wave equation in complex geometries corroborate the theoretical analysis.
Numerical simulations of the 3D elastic wave equation in heterogeneous media with complex non-planar free surface topography are given, including numerical simulations of community developed seismological benchmark problems. 
Computational results show that even-order upwind SBP FD operators are more efficient, robust and less prone to numerical dispersion errors on marginally resolved meshes when compared to the odd-order upwind and traditional SBP FD operators.
Finally, scaling tests demonstrate nearly perfect strong scaling and verify the efficiency of our parallel implementation of the high order upwind SBP methods.
\end{abstract}


\end{frontmatter}

\section{Introduction}
High fidelity numerical simulations of seismic (elastic) waves are prevalent in many applications such as earthquake engineering, natural minerals and energy resources exploration, strong-ground motion analysis, and underground fluid injection monitoring.
Seismic waves emanating from geophysical events can propagate hundreds to thousands of kilometres interacting with geological structure and complicated free-surface topography. 
Exploration seismology and natural earthquake hazard mitigation increasingly rely on multi-scale and high frequency (0--20 Hz)  simulations.
Often, surface and interface waves \cite{Rayleigh1885}  are the largest amplitude wave modes and are arguably the most important wave modes in the medium.
Therefore accurate and efficient numerical simulation of seismic surface and interface waves, and scattering of high-frequency wave modes by complex non-planar free-surface topography, are critical for assessing and quantifying seismic risks and hazards ~\cite{Graves_etal2011}.

In this study, we derive efficient high order accurate numerical methods for large scale numerical simulations of seismic waves in complex geometries. 
We consider the elastic wave equation in the first-order form, where the unknowns are the particle velocity vector and stress fields.
The main classes of numerical methods for the solution of time-dependent PDEs are the spectral method, finite element (FE) method, finite difference (FD) method,  finite volume (FV) method and the discontinuous Galerkin (DG) finite element method. 
They all have different strengths and weaknesses. 
Computational efficiency has continued to make the use of FD methods on structured grids attractive. 
However, the presence of boundary conditions and complex non-planar free-surface topography make the design of stable and accurate FD methods challenging. 
For computational seismology, staggered grids FD methods on  Cartesian meshes are the industry standard because they are efficient and have optimal numerical dispersion properties. 
However, the design of high order accurate and stable staggered FD methods for the elastic wave equation in complex geometries is a challenge. 
Although some progress is being made in this direction \cite{OssianAnders2020}.

For well-posed initial boundary value problems (IBVP), the summation-by-parts (SBP) FD \cite{BStrand1994} with the simultaneous approximation term (SAT) \cite{CarpenterGottliebAbarbanel1994,Mattsson2003,NordstromCarpenter2001} technique for implementing boundary conditions enables the development of stable numerical approximations on smooth geometries. The methods can be extended to complex geometries using curvilinear coordinate transforms and multi-block schemes \cite{DuruandDunham2016,Svard2004}. 
Traditional SBP FD operators are based on central finite difference formula with special one-sided boundary closures, designed such that the operator preserves the integration by parts principle \cite{BStrand1994}.
Often, traditional  SBP FD operators which approximate the spatial derivative suffer from spurious unresolved wave-modes in their numerical solutions. 
For marginally resolved solutions, these spurious wave-modes have the potential to destroy the accuracy of numerical solutions for a first-order hyperbolic partial differential equation, such as the elastic wave equation.

To ensure the accuracy of numerical solutions of elastic wave equations in complex geometries, we  discretise  the  3D  elastic  wave  equation  with  a  pair  of  non-central  (upwind)  FD  stencils  \cite{Mattsson2017}, on  boundary-conforming  curvilinear  meshes. The main benefit for these operators \cite{Mattsson2017,DovgilovichSofronov2015} is that they have the potential to suppress poisonous spurious oscillations from unresolved wave-modes, which can destroy the accuracy of numerical simulations.
However,  these operators are asymmetric and dissipative, can potentially destroy symmetries that exist in the continuum problem.  For example, the linear elastic wave equation with free-surface boundary conditions preserves the mechanical energy for all time.  It is imperative that a stable numerical approximation must preserve these symmetries, by mimicking the corresponding continuous energy estimate at the discrete level.  Otherwise numerical simulations on marginally resolved meshes can be polluted by numerical artefacts, in particular for long time simulations.  A good case in point is that important information in the medium propagated by scattered high frequency surface waves could be corrupted through numerical dissipation.



Our first objective is to carefully combine the upwind SBP operator pairs which have good dispersion relation properties so that we preserve the discrete anti-symmetric property and invariants of the underlying IBVP. 
We then achieve energy stability by imposing boundary conditions weakly with penalty terms, in a manner that leads to bounded discrete energy estimate. In particular, if the IBVP is energy conserving, the numerical approximation is also energy conserving. We derive a priori error estimates and prove the convergence of the numerical error.

The second objective of this study is to reveal the numerical dispersion properties of the discrete upwind SBP FD operators \cite{Mattsson2017}. By the analysis of the numerical dispersion relation  we show that not all the upwind SBP FD operators derived in \cite{Mattsson2017} are capable of suppressing poisonous spurious unresolved wave modes for the elastic wave equation. In particular, we show that odd-order upwind SBP FD operators also support spurious unresolved high frequency wave modes on marginally resolved meshes. Even-order upwind SBP FD operators (of order $2, 4, 6$) do not support spurious unresolved wave modes and have better numerical dispersion properties than traditional SBP FD operators and odd-order upwind SBP FD operators.   The numerical dispersion relation reaches near optimal properties for the 6th order accurate upwind SBP FD operator. Beyond the 6th order accurate upwind SBP FD operator, higher order accuracy does not improve the dispersion properties of the upwind SBP operators. Numerical experiments for the 3D elastic wave equation in complex geometries verify the analysis.


Numerical solutions are integrated in time using the fourth-order accurate low-storage Runge-Kutta method \cite{CarpenterKennedy1994}. 
The numerical method is implemented in WaveQLab \cite{DuruandDunham2016}, a petascale elastic wave solver. 
Simulations of elastic waves in heterogeneous media with free surface topography are presented, including the numerical simulation of community developed seismological benchmark problems. 
 Our results show that even-order upwind SBP FD operators are more robust and less prone to numerical dispersion on marginally resolved meshes when compared to traditional SBP operators \cite{BStrand1994,DuruandDunham2016}.
Consequently, more accurate numerical solutions on marginally refined meshes can be achieved with less computational effort by carefully using certain ($6$th order acurate) upwind SBP operators in a way that mimic the continuous problem. 

We have used the perfectly matched layer (PML) \cite{DuruKozdonKreiss2016} to enable efficient domain truncation and prevent artificial numerical reflections, from the computational boundaries, from contaminating the numerical simulations.  The effective absorption properties of the PML allows us to sufficiently limit the computational domain  with only a few grid points around the computational boundaries where the PML is active. Thus saving as much as $\% 97.9017$ of  the required computational resources for the 3D seismological benchmark problem \cite{Seismowine}.
Because of the asymmetric properties of the PML and the upwind SBP operators, a stable implementation of the PML for the 3D IBVP elastic wave equation  using the upwind SBP operators is a non-trivial task. The details of the numerical treatment of the PML using upwind SBP operators will be reported in a forthcoming paper.

For large scale numerical simulations, it is important that the parallel numerical software is efficient and scalable. We have performed  strong  scaling  tests, demonstrating nearly perfect scaling and   verifying  the  efficiency  of  our  parallel implementation  of  high  order  upwind  SBP  operators  for large scale elastic  wave  simulations  in  3D  complex geometries.

 The structure of the paper is as follows. 
In the next section, we present the elastic wave equation in general curvilinear coordinates and prove anti-symmetric properties that must be preserved. 
In section 3, we introduce general linear well-posed boundary conditions and derive continuous energy estimates. 
In section 4, we discretise in space, introduce upwind SBP operators, and approximate the elastic wave equation in space. 
Numerical boundary conditions are derived in section 5 as well as the derivation of  semi-discrete energy estimates, proving stability. 
In section 6, we provide an error analysis for our discretised schemes. 
We also provide a dispersion relation analysis, showing that even-order upwind SBP operators have dispersion relations which better mimic the continuous problem when compared to their odd-order and traditional counterparts. 
In section 7, we present numerical simulations verifying the accuracy and demonstrating the efficacy of the numerical method in complex geometry, with geologically constrained non-planar free-surface topography. 
We also provide scaling tests that indicate our code achieves near-perfect strong scaling. 
 In section 8, we summarise our findings and speculate on the directions for future work.

\section{Preliminary}
In this section we introduce  the elastic wave equation in general curvilinear coordinates and derive invariants and anti-symmetric properties that must be preserved.
\subsection{Physical model}

    Let $\Omega$ be a connected compact subset of $\R^3$ with a piecewise-smooth boundary $\Gamma$, $t>0$ be the time variable, $\bm{\sigma} \coloneqq (\sigma_{xx},\sigma_{yy},\sigma_{zz}\sigma_{xy},\sigma_{xz},\sigma_{yz})^T$ be the vector of stresses, and $\mathbf{v} \coloneqq (v_x,v_y,v_z)^T$ be the particle velocities.
    Throughout,  we assume that the velocities and stresses are bounded functions of space and time. 
    The first order time-dependent elastic wave equations in a source free, heterogeneous medium are
    \begin{align}\label{eq:elastic_sys}
        \begin{pmatrix}
        \rho \frac{\partial v_x}{\partial t }\\
        \rho \frac{\partial v_y}{\partial t }\\
        \rho \frac{\partial v_z}{\partial t } \vspace{0.2em} \\
        \mathbf{S} 
        \begin{pmatrix}
        \frac{\partial \sigma_{xx} }{\partial t }\\
        \frac{\partial \sigma_{yy} }{\partial t }\\
        \frac{\partial \sigma_{zz} }{\partial t }\\
        \frac{\partial \sigma_{xy} }{\partial t }\\
        \frac{\partial \sigma_{xz} }{\partial t }\\
        \frac{\partial \sigma_{yz} }{\partial t }\\
        \end{pmatrix}
        \end{pmatrix}
        = 
        \begin{pmatrix}
         \frac{\partial \sigma_{xx} }{\partial x } +  \frac{\partial \sigma_{xy} }{\partial y } +  \frac{\partial \sigma_{xz} }{\partial z }\\
         \frac{\partial \sigma_{xy} }{\partial x } +  \frac{\partial \sigma_{yy} }{\partial y } +  \frac{\partial \sigma_{yz} }{\partial z }\\
         \frac{\partial \sigma_{xz} }{\partial x } +  \frac{\partial \sigma_{yz} }{\partial y } +  \frac{\partial \sigma_{zz} }{\partial z }\\
         \frac{\partial v_{x} }{\partial x }\\
         \frac{\partial v_{y} }{\partial y }\\
         \frac{\partial v_{z} }{\partial z }\\
         \frac{\partial v_{x} }{\partial y } + \frac{\partial v_{y} }{\partial x } \\
         \frac{\partial v_{x} }{\partial z } + \frac{\partial v_{z} }{\partial x } \\
         \frac{\partial v_{y} }{\partial z } + \frac{\partial v_{z} }{\partial y } 
        \end{pmatrix},
    \end{align}
    where $\mathbf{S}= \mathbf{S}^T > 0$ is the compliance matrix given in Equation \eqref{eq:S matrix} and $\rho: \Omega \mapsto \R_+$ is the density of the medium. 
    The first three equations in \eqref{eq:elastic_sys} describe the conservation of momentum, and the latter six encode time derivative of Hookes law in three space dimensions. \\
    It is convenient to work with Equation \eqref{eq:elastic_sys} in its general basis form. Let $\bm{e} \coloneqq \{\bm{e}_{x}, \bm{e}_{y},\bm{e}_{z}\}$ be a basis for $\R^3$, then we may recast this PDE system into its conservative and non-conservative components by introducing the anti-symmetric form \cite{Duru_exhype_2_2019}
    \begin{align}\label{eq:anti_sys}
        \bm{P}^{-1} \pf{\bm{Q}}{t} = \nabla \cdot \bm{F} (\bm{Q} ) + \sum_{ \xi \in \{x,y,z \} } \bm{B}_{\xi} (\nabla \bm{Q}), 
    \end{align}
    where $\bm{Q} \coloneqq (\bm{v}, \bm{\sigma} )^T$, 
    \begin{align}\label{eq:S matrix}
        \bm{P} = \begin{pmatrix} \rho^{-1} \bm{1} & \bm{0} \\ \bm{0}^T & \bm{C} \end{pmatrix}, && \bm{S}^{-1} \coloneqq \bm{C} \coloneqq  \begin{pmatrix} 
        2 \mu + \lambda & \lambda & \lambda & 0 & 0 & 0 \\
         \lambda & 2 \mu + \lambda & \lambda & 0 & 0 & 0 \\
         \lambda & \lambda & 2 \mu + \lambda & 0 & 0 & 0 \\
        0 & 0 & 0 & \mu & 0 & 0 \\
        0 & 0 & 0 & 0 & \mu & 0 \\
        0 & 0 & 0 & 0 & 0 & \mu 
        \end{pmatrix}
        ,
    \end{align}
    with $\mu, \lambda \in \R$ are independent Lam\'e parameters which describe an isotropic medium and 
    \begin{align}\label{eq:split_form}
         \bm{F}_{\xi } (\bm{Q} )  \coloneqq 
         \begin{pmatrix}
            e_{\xi x} \sigma_{xx} + e_{\xi y} \sigma_{xy} + e_{\xi z} \sigma_{xz}\\
            e_{\xi x} \sigma_{xy} + e_{\xi y} \sigma_{yy} + e_{\xi z} \sigma_{yz}\\
            e_{\xi x} \sigma_{xz} + e_{\xi y} \sigma_{yz} + e_{\xi z} \sigma_{zz}\\
            0 \\
            0 \\
            0 \\
            0 \\
            0 \\
            0 
         \end{pmatrix},
         &&
         \bm{B}_{\xi} (\nabla \bm{Q}) :=
         \begin{pmatrix}
            0 \\
            0 \\
            0 \\
            e_{\xi x} \pf{v_x}{\xi} \\
            e_{\xi y} \pf{v_y}{\xi}\\
            e_{\xi z} \pf{v_z}{\xi}\\
            e_{\xi y} \pf{v_x}{\xi} + e_{\xi x} \pf{v_y}{\xi} \\
            e_{\xi z} \pf{v_x}{\xi} + e_{\xi x} \pf{v_z}{\xi} \\
            e_{\xi z} \pf{v_y}{\xi} + e_{\xi y} \pf{v_z}{\xi} 
         \end{pmatrix},
    \end{align}
    with $\bm{F} \coloneqq (\bm{F}_{x}, \bm{F}_{y}, \bm{F}_{z})^T$ and the basis vectors $\bm{e}_{\xi} = \left(e_{\xi x}, e_{\xi y}, e_{\xi z}\right)^T$.  
    For instance in the standard Cartesian co-ordinate systems we have 
    \begin{align}\label{eq:canonical_basis}
        \bm{e}_{x} = (1,0,0)^T, &&  \bm{e}_{y} = (0,1,0)^T, && \bm{e}_{z} = (0,0,1)^T,
    \end{align}
    which recovers Equation \ref{eq:elastic_sys}. 
    
    \begin{lemma}\label{lem:anti_sym_prop}
        Consider the anti-symmetric form given in Equation \eqref{eq:anti_sys}. 
        For any basis $\bm{e}$ that spans $\Omega$ we have
        \begin{align*}
            \left(\left(\frac{\partial \bm{Q}}{\partial \xi}\right)^T \bm{F}_{\xi}\left(\bm{Q}\right)- \bm{Q}^T\bm{B}_{\xi} (\nabla \bm{Q}) \right) = 0.
        \end{align*}
    \end{lemma}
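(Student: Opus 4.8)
The plan is to exploit the block structure of the two vector fields and reduce the claim to a purely algebraic identity. Writing $\bm{Q} = (v_x, v_y, v_z, \sigma_{xx}, \sigma_{yy}, \sigma_{zz}, \sigma_{xy}, \sigma_{xz}, \sigma_{yz})^T$, I would first record the two key observations from the definitions in Equation \eqref{eq:split_form}: the flux $\bm{F}_{\xi}(\bm{Q})$ is supported only on the three velocity slots and depends only on the stresses, while $\bm{B}_{\xi}(\nabla\bm{Q})$ is supported only on the six stress slots and depends only on the velocity derivatives $\partial v_i/\partial\xi$. Consequently the two inner products in the statement never pair the same components of $\bm{Q}$ against themselves; each reduces to a sum of terms of the form (stress component) $\times$ (velocity derivative).

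Next I would compute $(\partial\bm{Q}/\partial\xi)^T \bm{F}_{\xi}(\bm{Q})$ explicitly. Because only the velocity entries of $\partial\bm{Q}/\partial\xi$ pair with the nonzero (velocity) entries of $\bm{F}_{\xi}$, this product collapses to $\sum_i (\partial v_i/\partial\xi)\,(\text{$i$-th flux row})$. Collecting the result by the three independent derivatives $\partial v_x/\partial\xi$, $\partial v_y/\partial\xi$, $\partial v_z/\partial\xi$ yields, respectively, the coefficients $e_{\xi x}\sigma_{xx}+e_{\xi y}\sigma_{xy}+e_{\xi z}\sigma_{xz}$, then $e_{\xi x}\sigma_{xy}+e_{\xi y}\sigma_{yy}+e_{\xi z}\sigma_{yz}$, and finally $e_{\xi x}\sigma_{xz}+e_{\xi y}\sigma_{yz}+e_{\xi z}\sigma_{zz}$.

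Then I would expand $\bm{Q}^T \bm{B}_{\xi}(\nabla\bm{Q})$ in the same way, where only the six stress entries of $\bm{Q}$ pair with the nonzero (stress) entries of $\bm{B}_{\xi}$. Regrouping this sum by the same three velocity derivatives, the coefficient of $\partial v_x/\partial\xi$ gathers the contributions coming from the $\sigma_{xx}$, $\sigma_{xy}$ and $\sigma_{xz}$ rows of $\bm{B}_{\xi}$ and reproduces exactly $\sigma_{xx}e_{\xi x}+\sigma_{xy}e_{\xi y}+\sigma_{xz}e_{\xi z}$; similarly the coefficient of $\partial v_z/\partial\xi$ collects $\sigma_{zz}e_{\xi z}+\sigma_{xz}e_{\xi x}+\sigma_{yz}e_{\xi y}$, and likewise for $\partial v_y/\partial\xi$. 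Comparing the two regrouped expressions shows that each of the three coefficients agrees term-for-term, so the difference vanishes identically, and independently of the choice of basis $\bm{e}$ since the entries $e_{\xi\cdot}$ enter both products through the same products with the stresses.

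The computation is routine and there is no genuine analytic obstacle. The only point requiring care is the bookkeeping in the regrouping step: one must match the symmetric off-diagonal stresses $\sigma_{xy},\sigma_{xz},\sigma_{yz}$ against the paired velocity-derivative rows of $\bm{B}_{\xi}$ correctly. The structural reason the identity holds is that the linear map sending stresses to the flux $\bm{F}_{\xi}$ and the linear map sending velocity gradients to $\bm{B}_{\xi}$ are mutually adjoint (transpose) operators, which is the discrete shadow of the classical fact that in the continuum the divergence-of-stress operator in the momentum balance and the symmetric-gradient operator in Hooke's law are formal adjoints. I would close by noting that this adjointness is exactly what makes the splitting in Equation \eqref{eq:anti_sys} ``anti-symmetric,'' and is the property on which the later energy estimate will rely.
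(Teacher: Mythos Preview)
Your proposal is correct and is precisely the approach the paper takes: the paper's own proof reads in its entirety ``Expanding the matrix multiplication and simplifying yields the result,'' and your write-up simply carries out that expansion in detail, with the regrouping by $\partial v_x/\partial\xi$, $\partial v_y/\partial\xi$, $\partial v_z/\partial\xi$ matching term-for-term as you describe. Your closing remark about the adjointness of the stress-to-flux and velocity-gradient-to-$\bm{B}_\xi$ maps is a nice structural explanation that goes a bit beyond what the paper states explicitly.
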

    \begin{proof}
        Expanding the matrix multiplication and simplifying yields the result.
    \end{proof}
    We will find Lemma \ref{lem:anti_sym_prop} useful in proving stability in our discretisation scheme in complex geometries. 
    It will often be the case that our basis will be defined locally, through finding a set of functions whose partial derivatives evaluated at each point in $\Omega$ span $\R^3$. 
    For instance, consider the functions $\text{proj}_{\xi}: \Omega \mapsto \R$ for $\xi \in \{x,y,z\}$ given through
    \begin{align}\label{Eq:indicator}
        \text{proj}_{\xi}(x,y,z) \coloneqq \sum_{\eta \in \{x,y,z\}} \eta \1_{ \{ \xi \}} (\eta),  && \1_{B} (v) \coloneqq 
        \begin{cases}
            1 & \text{if } v \in B, \\
            0 & \text{otherwise.}
        \end{cases}
    \end{align}
    These functions form the (local) basis vectors
    \begin{align}
        \bm{e}_{\xi}|_{(x_0,y_0,z_0)} =  \left( \pf{}{x} \text{proj}_{\xi} , \pf{}{y} \text{proj}_{\xi}, \pf{}{z} \text{proj}_{\xi} \right) \Big|_{(x_0,y_0,z_0)},
    \end{align}
    so $\bm{e}_{\xi}$  are the contravariant basis defined in \eqref{eq:canonical_basis}. 
    
\subsection{Curvilinear coordinates}
Assume $\Omega \subset \R^3$ to be sufficiently smooth such that it can be mapped to the unit cube $\Omegat \coloneqq [0,1]^3$. See also Figure \ref{fig:my_label}. 
If $\Omega$ is piece-wise smooth we can partition it into locally smooth sub-blocks and map each sub-block to the unit cube.  
For simplicity, we will only consider one sub-block here. 
Let $\Phi : \Omega \mapsto \Omegat$ be a diffeomorphism and adopt the notation
\begin{align*}
    \Phi (x,y,z) \coloneqq (q(x,y,z), r(x,y,z), s(x,y,z)),
\end{align*}
where ${\xi}: \Omega \mapsto \R$ for $\xi \in \{q,r,s \}$. 
Define for $S \subset {\Omega}$ 
\begin{align*}
	\Phi (S) \coloneqq \{ \Phi(s) \in \Omegat \ | \   s \in S  \}.
\end{align*}
Assume that $\Phi({\Omega}) = \Omegat$, and furthermore the boundary interaction 
\begin{align*}
	\Phi^{-1}(\{0\} \times [0,1]^2  ) =  \bigcup_{y ,z}  \left(\widehat{X}(y,z),y,z\right) , 
\end{align*}
for a smooth function $\widehat{X}:\R^2 \mapsto \R $. For example $\widehat{X}(y,z)$ could describe a complex free-surface topography.

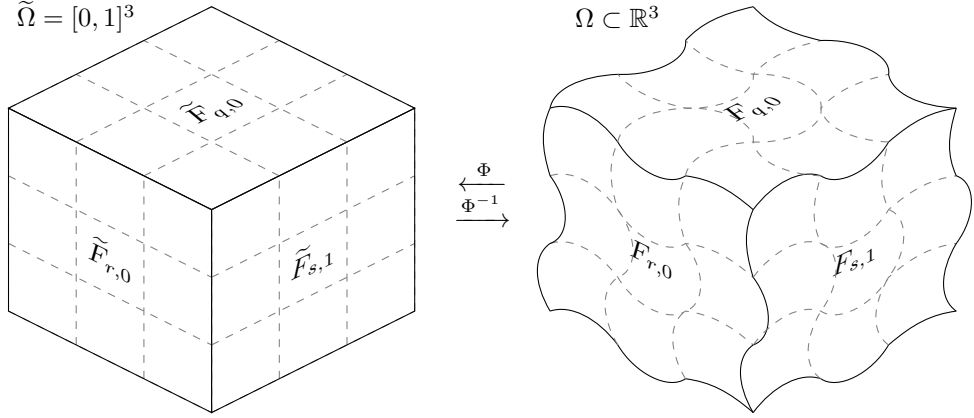
\begin{figure}[H]
    \centering
    \begin{tikzpicture}[every node/.style={minimum size=1cm},on grid, scale = 0.9]
    
\begin{scope}[every node/.append style={yslant=-0.5},yslant=-0.5]
  \node at (0.5,2.5) {};
  \node at (1.5,2.5) {};
  \node at (2.5,2.5) {};
  \node at (0.5,1.5) {};
  \node at (1.5,1.5) {$\widetilde{F}_{r,0}$};
  \node at (2.5,1.5) {};
  \node at (0.5,0.5) {};
  \node at (1.5,0.5) {};
  \node at (2.5,0.5) {};
  \draw[dashed, gray] (0,0) grid (3,3);
  \draw (0,0) rectangle (3,3);
\end{scope}
\begin{scope}[every node/.append style={yslant=0.5},yslant=0.5]
  \node at (3.5,-0.5) {};
  \node at (4.5,-0.5) {};
  \node at (5.5,-0.5) {};
  \node at (3.5,-1.5) {};
  \node at (4.5,-1.5) {$\widetilde{F}_{s,1}$};
  \node at (5.5,-1.5) {};
  \node at (3.5,-2.5) {};
  \node at (4.5,-2.5) {};
  \node at (5.5,-2.5) {};
  \draw[dashed, gray] (3,-3) grid (6,0);
  \draw (3,-3) rectangle (6,0);
\end{scope}
\begin{scope}[every node/.append style={
    yslant=0.5,xslant=-1},yslant=0.5,xslant=-1
  ]
  \node at (3.5,2.5) {};
  \node at (3.5,1.5) {};
  \node at (3.5,0.5) {};
  \node at (4.5,2.5) {};
  \node at (4.5,1.5) {$\widetilde{F}_{q,0}$};
  \node at (4.5,0.5) {};
  \node at (5.5,2.5) {};
  \node at (5.5,1.5) {};
  \node at (5.5,0.5) {};
  \draw[dashed, gray] (3,0) grid (6,3);
  \draw (3,0) rectangle (6,3);
\end{scope}

\node at (1,4.35) {$\Omegat = [0,1]^3$};

\node at (7,2) {$\xleftarrow[]{ \ \Phi \  } $};
\node at (7,1.5) {$\xrightarrow[]{\Phi^{-1} }$};

\begin{scope}[shift = {(8,0)}]
\node at (1,4.35) {$\Omega \subset \R^3$};
\begin{scope}[every node/.append style={yslant=-0.5},yslant=-0.5]

    \coordinate (A1) at (0,0);
    \coordinate (A2) at (1,0);
    \coordinate (A3) at (2,0);
    \coordinate (A4) at (3,0);
    
    \coordinate (B1) at (0,1);
    \coordinate (B2) at (1,1);
    \coordinate (B3) at (2,1);
    \coordinate (B4) at (3,1);
    
    \coordinate (C1) at (0,2);
    \coordinate (C2) at (1,2);
    \coordinate (C3) at (2,2);
    \coordinate (C4) at (3,2);
    
    \coordinate (D1) at (0,3);
    \coordinate (D2) at (1,3);
    \coordinate (D3) at (2,3);
    \coordinate (D4) at (3,3);

  \node at (0.5,2.5) {};
  \node at (1.5,2.5) {};
  \node at (2.5,2.5) {};
  \node at (0.5,1.5) {};
  \node at (1.5,1.5) {$F_{r,0}$};
  \node at (2.5,1.5) {};
  \node at (0.5,0.5) {};
  \node at (1.5,0.5) {};
  \node at (2.5,0.5) {};
  \draw[color=black] (A1) to [bend left=40] (A2) to [bend right=50] (A3) to [bend left=20] (A4) ; 
  \draw[dashed, gray] (B1) to [bend left=40] (B2) to [bend right=50] (B3) to [bend left=20] (B4) ; 
  \draw[dashed, gray] (C1) to [bend left=40] (C2) to [bend right=50] (C3) to [bend left=20] (C4) ; 
  \draw[color=black] (D1) to [bend left=40] (D2) to [bend right=50] (D3) to [bend left=20] (D4) ; 
  
  \draw[color=black] (A1) to [bend left=40] (B1) to [bend right=50] (C1) to [bend left=20] (D1) ; 
  \draw[dashed, gray] (A2) to [bend left=40] (B2) to [bend right=50] (C2) to [bend left=20] (D2) ; 
  \draw[dashed, gray] (A3) to [bend left=40] (B3) to [bend right=50] (C3) to [bend left=20] (D3) ; 
  \draw[color=black] (A4) to [bend left=40] (B4) to [bend right=50] (C4) to [bend left=20] (D4) ; 
\end{scope}
\begin{scope}[every node/.append style={yslant=0.5},yslant=0.5, shift = {(3,-3)}]
    \coordinate (A1) at (0,0) + (-3,3);
    \coordinate (A2) at (1,0);
    \coordinate (A3) at (2,0);
    \coordinate (A4) at (3,0);
    
    \coordinate (B1) at (0,1);
    \coordinate (B2) at (1,1);
    \coordinate (B3) at (2,1);
    \coordinate (B4) at (3,1);
    
    \coordinate (C1) at (0,2);
    \coordinate (C2) at (1,2);
    \coordinate (C3) at (2,2);
    \coordinate (C4) at (3,2);
    
    \coordinate (D1) at (0,3);
    \coordinate (D2) at (1,3);
    \coordinate (D3) at (2,3);
    \coordinate (D4) at (3,3);

  \node at (0.5,2.5) {};
  \node at (1.5,2.5) {};
  \node at (2.5,2.5) {};
  \node at (0.5,1.5) {};
  \node at (1.5,1.5) {$F_{s,1}$};
  \node at (2.5,1.5) {};
  \node at (0.5,0.5) {};
  \node at (1.5,0.5) {};
  \node at (2.5,0.5) {};
  
  \draw[color=black] (A1) to [bend left=40] (A2) to [bend right=50] (A3) to [bend left=20] (A4) ; 
  \draw[dashed, gray] (B1) to [bend left=40] (B2) to [bend right=50] (B3) to [bend left=20] (B4) ; 
  \draw[dashed, gray] (C1) to [bend left=40] (C2) to [bend right=50] (C3) to [bend left=20] (C4) ; 
  \draw[color=black] (D1) to [bend left=40] (D2) to [bend right=50] (D3) to [bend left=20] (D4) ; 
  
  \draw[dashed, gray] (A2) to [bend left=40] (B2) to [bend right=50] (C2) to [bend left=20] (D2) ; 
  \draw[dashed, gray] (A3) to [bend left=40] (B3) to [bend right=50] (C3) to [bend left=20] (D3) ; 
  \draw[color=black] (A4) to [bend left=40] (B4) to [bend right=50] (C4) to [bend left=20] (D4) ; 
\end{scope}
\begin{scope}[every node/.append style={
    yslant=0.5,xslant=-1},yslant=0.5,xslant=-1,
    shift = {(3,0)}
  ]
    \coordinate (A1) at (0,0);
    \coordinate (A2) at (1,0);
    \coordinate (A3) at (2,0);
    \coordinate (A4) at (3,0);
    
    \coordinate (B1) at (0,1);
    \coordinate (B2) at (1,1);
    \coordinate (B3) at (2,1);
    \coordinate (B4) at (3,1);
    
    \coordinate (C1) at (0,2);
    \coordinate (C2) at (1,2);
    \coordinate (C3) at (2,2);
    \coordinate (C4) at (3,2);
    
    \coordinate (D1) at (0,3);
    \coordinate (D2) at (1,3);
    \coordinate (D3) at (2,3);
    \coordinate (D4) at (3,3);

  \node at (0.5,2.5) {};
  \node at (1.5,2.5) {};
  \node at (2.5,2.5) {};
  \node at (0.5,1.5) {};
  \node at (1.5,1.5) {$F_{q,0}$};
  \node at (2.5,1.5) {};
  \node at (0.5,0.5) {};
  \node at (1.5,0.5) {};
  \node at (2.5,0.5) {};
  
  \draw[dashed, gray] (B1) to [bend left=40] (B2) to [bend right=50] (B3) to [bend left=20] (B4) ; 
  \draw[dashed, gray] (C1) to [bend left=40] (C2) to [bend right=50] (C3) to [bend left=20] (C4) ; 
  \draw[color=black] (D1) to [bend left=40] (D2) to [bend right=50] (D3) to [bend left=20] (D4) ; 
  
  \draw[dashed, gray] (A2) to [bend left=40] (B2) to [bend right=50] (C2) to [bend left=20] (D2) ; 
  \draw[dashed, gray] (A3) to [bend left=40] (B3) to [bend right=50] (C3) to [bend left=20] (D3) ; 
  \draw[color=black] (A4) to [bend left=40] (B4) to [bend right=50] (C4) to [bend left=20] (D4) ; 
\end{scope}
\end{scope}

\end{tikzpicture}
    \caption{Curvilinear coordinate transform and boundary faces of the computational space $\Omegat$ and modelling space $\Omega$. }
    \label{fig:my_label}
\end{figure}

The Jacobian determinant for $\Phi^{-1}$ can be written as
\begin{align*}
	J = x_q (y_r z_s - z_r y_s) - y_q(x_r z_s - z_r x_s) + z_q(x_r y_s - y_r x_s).
\end{align*}
Here $\xi_{\eta}$ is the partial derivative ${\partial \xi}/{\partial \eta}$ for $\xi,\eta \in \{x,y,z,q,r,s\}$.
Similarly, 
\begin{align*}
	q_x = \frac{1}{J} (y_r z_s - z_r y_s) && r_x = \frac{1}{J} (z_q y_s - y_q z_s) && s_x = \frac{1}{J} (y_q z_r - z_q y_r), \\
	q_y = \frac{1}{J} (z_r x_s - x_r z_s) && r_y = \frac{1}{J} (x_q z_s - z_q x_s) && s_y = \frac{1}{J} (z_q x_r - x_q z_r), \\
	q_z = \frac{1}{J} (z_r y_s - y_r x_s) && r_z = \frac{1}{J} (y_q x_s - y_s x_q) && s_z = \frac{1}{J} (x_q y_r - x_r y_q) .
\end{align*}
The spatial derivatives in the transformed coordinates have: the conservative form
\begin{align}\label{eq:conservative}
	J \frac{\partial v}{\partial x} =  \frac{\partial}{\partial q} (Jq_{x} v) + \frac{\partial}{\partial r} (J r_x v) + \frac{\partial}{\partial s} (J s_x v) ,
\end{align}
and the non-conservative form 
 \begin{align}\label{eq:non-conservative}
\frac{\partial v}{\partial x} = q_x\frac{\partial v}{\partial q} + r_x\frac{\partial v}{\partial r}  + s_x\frac{\partial v}{\partial s}.
\end{align}
Although the conservative \eqref{eq:conservative} and non-conservative \eqref{eq:non-conservative} transformations of the  spatial derivatives are mathematically equivalent, when discretised they give different approximations.
Specifically,  in the discrete setting, the conservative form \eqref{eq:conservative} approximated with an SBP operator  preserves the divergence theorem. 

For each $(x_0,y_0,z_0) \in \Omega$ and function $\xi \in \{q,r,s\}$ let $\xi_\eta = \pf{\xi}{\eta}$ for $\xi \in \{q,r,s\}, \ \eta \in \{x,y,z\}$, and choose the co-ordinate basis vectors
\begin{align}
    \bm{e}_{\xi} = J(\xi_x, \xi_y, \xi_{z})\Big|_{(x_0,y_0,z_0)},
\end{align}
so Equation \eqref{eq:elastic_sys} is transformed to the curvilinear coordinates $(q, r, s)$, with the gradient operator redefined as $\nabla \coloneqq (\pf{}{q},\pf{}{r},\pf{}{s})$, to
\begin{align}\label{eq:transformedEQ}
    \Pt^{-1} \pf{}{t} \bm{Q} = \nabla \cdot \bm{F} (\bm{Q}) + \sum_{\xi \in \{q,r,s\} } \bm{B}_{\xi} (\nabla \bm{Q} ),
\end{align}
where $\Pt = J^{-1} \bm{P}$ and 
 \begin{align}
         \bm{F}_{\xi} (\bm{Q} )  \coloneqq 
         \begin{pmatrix}
            J( { \xi_x} \sigma_{xx} +  { \xi_y} \sigma_{xy} +  { \xi_z} \sigma_{xz})\\
            J( { \xi_x} \sigma_{xx} +  { \xi_y} \sigma_{xy} +  { \xi_z} \sigma_{xz})\\
            J( { \xi_x} \sigma_{xx} +  { \xi_y} \sigma_{xy} +  { \xi_z} \sigma_{xz})\\
            0 \\
            0 \\
            0 \\
            0 \\
            0 \\
            0 
         \end{pmatrix},
         &&
         \bm{B}_{\xi} (\nabla \bm{Q}) \coloneqq
         \begin{pmatrix}
            0 \\
            0 \\
            0 \\
            J  { \xi_x} \pf{v_x}{\xi} \\
            J  { \xi_y} \pf{v_y}{\xi}\\
            J  { \xi_z} \pf{v_z}{\xi}\\
            J( { \xi_y} \pf{v_x}{\xi} +  { \xi_x} \pf{v_y}{\xi}) \\
            J( { \xi_z} \pf{v_x}{\xi} +  { \xi_x} \pf{v_z}{\xi}) \\
            J( { \xi_z} \pf{v_y}{\xi} +  { \xi_y} \pf{v_z}{\xi}) 
         \end{pmatrix}
         .
    \end{align}
    Note that with the basis vectors  $\bm{e}_{\xi} = \left(J  { \xi_x}, J  { \xi_y}, J  { \xi_z}\right)^T$ we get the anti-symmetric form \eqref{eq:split_form}.
    
    \begin{remark}
    The coordinate transformation in Equation \eqref{eq:transformedEQ} is structure preserving, that is Lemma \ref{lem:anti_sym_prop} holds  and 
         we have
        \begin{align*}
            \left(\left(\frac{\partial \bm{Q}}{\partial \xi}\right)^T \bm{F}_{\xi}\left(\bm{Q}\right)- \bm{Q}^T\bm{B}_{\xi} (\nabla \bm{Q}) \right) = 0,
        \end{align*}
        for all $\xi \in \{ q, r, s \} $. This will be crucial in deriving high order accurate, structure preserving and provably energy stable scheme for the elastic wave equation in complex geometries.
    \end{remark}
\section{Boundary Conditions}



In this section, we formulate linear well-posed  boundary conditions in complex geometries. 
As shown in Figure \ref{fig:my_label}, define the faces of the boundary $\Gamma$ as 
\begin{align*}
    F_{\xi, i } \coloneqq \{ (x_0,y_0,z_0) \in \Omega  \ | \ \xi(x_0,y_0,z_0) = i \}
\end{align*}
for $\xi \in \{q,r,s\}$ and $i \in \{0,1\}$. 
Each of the $F_{\xi, i }$ are Lebesgue-almost disjoint and form the boundary of $\Omega$, that is 
\begin{align}
    \Gamma = \bigcup_{\xi , i } F_{\xi, i }. 
\end{align}
Similarly the faces of the computational boundary $\widetilde{\Gamma}$ are given through 
\begin{align}\label{eq:boundary_faces}
    \widetilde{F}_{\xi, i} \coloneqq \Phi( F_{\xi, i}) = \{ (q,r,s)|_{(x_0,y_0,z_0)} \ | \ \xi(x_0,y_0,z_0) = i , (x_0,y_0,z_0) \in F_{\xi, i} \},
\end{align}
and the boundary $\widetilde{\Gamma}$ of $\Omegat$ is made from the level sets of the functions $q,r,s$,
\begin{align}
    \widetilde{\Gamma} =  \Phi (\Gamma) = \bigcup_{\xi \in \{q,r,s\} } \{ (q,r,s)|_{(x_0,y_0,z_0)} \ | \ \xi(x_0,y_0,z_0) \in \{ 0 , 1 \}, (x_0,y_0,z_0) \in \Gamma \} .
\end{align}

For a point $(x_0,y_0,z_0) \in F_{\xi, i }$, the unit normal vector to the surface $F_{\xi, i }$ is given by 
\begin{align}\label{eq:normal_vector}
    \bm{n} (x_0,y_0,z_0) =  \frac{1}{\sqrt{\xi_x^2+\xi_y^2+\xi_z^2 }} \begin{pmatrix} \xi_x \\ \xi_y \\ \xi_z \end{pmatrix} \Big|_{(x_0,y_0,z_0)},
\end{align}
where $\xi_x, \xi_y, \xi_z$ are the partial derivatives of $\xi$ with respect to $x,y,z$ respectively. 

For each normal vector $\bm{n}(x_0,y_0,z_0)$, we can form a locally spanning orthonormal basis with the vectors $\bm{m}(x_0,y_0,z_0)$ and $\bm{l}(x_0,y_0,z_0)$ given through a change of variable to the computational space
\begin{align*}
    \bm{m}\left(x_0,y_0,z_0\right) \coloneqq 
     \frac{\bm{m_0} - \l \bm{n} , \bm{m}_0 \r \bm{n} }{|\bm{m_0} - \l \bm{n} , \bm{m}_0 \r \bm{n}|} \Big|_{\left(x_0,y_0,z_0\right)}, 
    \quad 
    \bm{l} \left(x_0,y_0,z_0\right) \coloneqq  \bm{n} \times \bm{m}\Big|_{\left(x_0,y_0,z_0\right)},
\end{align*}
where $\bm{m}_0$ is a vector not in the span of $\bm{n}$. 
For brevity, we often drop the evaluation point when this is clear from context.

Denote the  local impedances by $Z_\eta$ for $\eta \in \{ l, m,n \}$,
where $Z_n = \rho c_n$ is the p--wave impedance and $Z_m =\rho c_m$, $Z_l= \rho c_l$ are   the s--wave impedances. Here, $c_n, c_m, c_l$ are the corresponding effective wave speeds defined 
We consider specifically an isotropic medium  the effective wavespeeds  are given by
$
c_n = c_p, \quad c_m =c_l = c_s.
$

On the boundary surface, we extract the particle velocity vector and the traction vector, and the local rotation matrix  
\begin{align}\label{eq:velocity_traction_rotation}
\mathbf{v} = \begin{pmatrix}
v_{x}\\
v_{y}\\
v_{z}
\end{pmatrix},
 \quad
\mathbf{T} = \begin{pmatrix}
T_{x}\\
T_{y}\\
T_{z}
\end{pmatrix} = \begin{pmatrix}
 \sigma_{xx}&\sigma_{xy} & \sigma_{xz} \\
 \sigma_{xy}&\sigma_{yy} & \sigma_{yz} \\
 \sigma_{xz}&\sigma_{yz} & \sigma_{zz} \\
\end{pmatrix}\begin{pmatrix}
 n_x   \\
n_y\\
n_z 
\end{pmatrix}, \quad \mathbf{R} = \begin{pmatrix}
 \bm{n}^T  \\
\bm{m}^T\\
\bm{l}^T 
\end{pmatrix},
\end{align}
where $\det(\mathbf{R}) \ne 0$ and $ \mathbf{R}^{-1} = \mathbf{R}^T$. 

Next, rotate the particle velocity and traction vectors into the local orthonormal basis, $\mathbf{l}$ ,  $\mathbf{m}$  and  $\mathbf{n}$, having 
\begin{align}\label{eq:local_velocity_tractions}
v_\eta  = \left(\mathbf{R}\mathbf{v}\right)_\eta , \quad T_\eta  = \left(\mathbf{R}\mathbf{T}\right)_\eta , \quad \eta \in \{ l, m, n \}.
\end{align}
Plane p--waves and plane s--waves propagating along the normal vector $\mathbf{n}$ on the boundary are given by
 \begin{align}\label{eq:characteristics}
{q}_\eta  = \frac{1}{2}\left({Z}_\eta {v}_\eta  + {T}_\eta \right), \quad {p}_\eta  = \frac{1}{2}\left({Z}_\eta {v}_\eta  - {T}_\eta \right), \quad Z_\eta  > 0.
\end{align}

At the boundary faces ${F}_{\xi, i}$ defined in \eqref{eq:boundary_faces} we consider the linear boundary conditions, 
{
\begin{equation}\label{eq:BC_General2}
\begin{split}
& \frac{Z_{\eta}}{2}\left({1-\gamma_\eta }\right){v}_\eta  -\frac{1+\gamma_\eta }{2} {T}_\eta  = 0,  \quad (x, y, z) \in {F}_{\xi, 0}, \\
& \frac{Z_{\eta}}{2} \left({1-\gamma_\eta }\right){v}_\eta  + \frac{1+\gamma_\eta }{2}{T}_\eta  = 0,  \quad (x, y, z) \in {F}_{\xi, 1}.
 \end{split}
\end{equation}
}
Here  $\gamma_\eta $ are real parameters with $ 0 \le |\gamma_\eta |\le 1$. 
%
%
 The boundary condition \eqref{eq:BC_General2}, can describe several physical situations.
 We have a {free-surface boundary condition}  if $\gamma_\eta  = 1$, an {absorbing boundary condition}  if $\gamma_\eta  = 0$ and a {clamped boundary condition}  if $\gamma_\eta  = -1$.
Note that the boundary condition \eqref{eq:BC_General2} satisfy the inequalities
  \begin{align}\label{eq:simplify_3}
&  v_\eta T_\eta  > 0, \quad \forall |\gamma_\eta | < 1, \quad \text{and} \quad  v_\eta T_\eta  = 0, \quad \forall |\gamma_\eta | = 1, \quad \xi \equiv 0, \nonumber
 \\
 & v_\eta T_\eta   < 0, \quad \forall |\gamma_\eta | < 1, \quad \text{and} \quad  v_\eta T_\eta  = 0, \quad \forall |\gamma_\eta | = 1, \quad \xi \equiv 1.
  \end{align}
 
We introduce the boundary terms which are surface integrals encoding the work done by the traction force on the boundary  
\begin{align}\label{eq:boundaryterm_101}
    BTs (\bm{v},\bm{T}) \coloneqq \oint_{\Gamma} \bm{v}^T \bm{T} dS = \sum_{\substack{\xi \in \{q,r,s\} \\ i \in \{0, 1 \} } } (-1)^{i+1} \int_0^1 \int_0^1 J \sqrt{\xi_x^2+\xi_y^2+\xi_z^2 }  \bm{v}^T \bm{T} \frac{dq dr ds}{d \xi}.
\end{align}
 
\begin{lemma}\label{Lem:BTs}
Consider the well-posed boundary conditions \eqref{eq:BC_General2} with $|\gamma_\eta | \le 1$. The boundary term $\mathrm{BTs}$ defined in \eqref{eq:boundaryterm_101}   is negative semi-definite, $\mathrm{BTs} \le 0$, for all $Z_\eta > 0$.
\end{lemma}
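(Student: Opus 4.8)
The plan is to exploit the rotational invariance of the integrand $\bm{v}^T\bm{T}$ together with the sign structure of the boundary conditions \eqref{eq:BC_General2}, so that each surface integral in \eqref{eq:boundaryterm_101} acquires a definite sign that cooperates with the orientation factor $(-1)^{i+1}$.

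First I would rewrite the integrand in the local orthonormal frame. Since $\mathbf{R}$ in \eqref{eq:velocity_traction_rotation} is orthogonal, $\mathbf{R}^{-1}=\mathbf{R}^T$, and hence $\bm{v}^T\bm{T} = (\mathbf{R}\bm{v})^T(\mathbf{R}\bm{T}) = \sum_{\eta\in\{l,m,n\}} v_\eta T_\eta$, using the rotated quantities \eqref{eq:local_velocity_tractions}. This decouples the boundary work into a sum of scalar products $v_\eta T_\eta$, one for each local polarisation $\eta$, which are precisely the quantities constrained by \eqref{eq:BC_General2}.

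Next I would establish the sign of each $v_\eta T_\eta$ face by face, which is exactly the content of the inequalities \eqref{eq:simplify_3}. On $F_{\xi,0}$ the first relation in \eqref{eq:BC_General2} gives, for $\gamma_\eta\neq-1$, $T_\eta = Z_\eta\frac{1-\gamma_\eta}{1+\gamma_\eta}v_\eta$, so $v_\eta T_\eta = Z_\eta\frac{1-\gamma_\eta}{1+\gamma_\eta}v_\eta^2 \ge 0$ whenever $|\gamma_\eta|<1$ and $Z_\eta>0$; the degenerate cases $\gamma_\eta=\pm1$ force $T_\eta=0$ or $v_\eta=0$ respectively, giving $v_\eta T_\eta=0$. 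The analogous computation on $F_{\xi,1}$, using the second relation in \eqref{eq:BC_General2}, produces the opposite sign $v_\eta T_\eta\le0$. Summing over $\eta$ then yields $\bm{v}^T\bm{T}\ge0$ on every $F_{\xi,0}$ and $\bm{v}^T\bm{T}\le0$ on every $F_{\xi,1}$.

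Finally I would assemble the estimate. In \eqref{eq:boundaryterm_101} the weight $J\sqrt{\xi_x^2+\xi_y^2+\xi_z^2}$ is strictly positive, and the orientation factor is $(-1)^{i+1}=-1$ on the faces $F_{\xi,0}$ and $+1$ on the faces $F_{\xi,1}$. Multiplying the nonnegative integrand on $F_{\xi,0}$ by $-1$, and the nonpositive integrand on $F_{\xi,1}$ by $+1$, makes every term in the sum over $\xi\in\{q,r,s\}$ and $i\in\{0,1\}$ nonpositive, so $\mathrm{BTs}\le0$. The main point to watch is matching the orientation sign $(-1)^{i+1}$ against the face-dependent sign of $v_\eta T_\eta$; the only mild subtlety is the case split at $|\gamma_\eta|=1$, where the linear boundary relation degenerates and one must argue $v_\eta T_\eta=0$ directly rather than by dividing through by $1+\gamma_\eta$.
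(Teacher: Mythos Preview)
Your proposal is correct and follows essentially the same approach as the paper: rewrite $\bm{v}^T\bm{T}$ in the rotated frame as $\sum_\eta v_\eta T_\eta$, invoke the sign relations \eqref{eq:simplify_3} on each face, and match them against the orientation factor $(-1)^{i+1}$ in \eqref{eq:boundaryterm_101}. The paper simply cites \eqref{eq:simplify_3} as a given identity, whereas you additionally spell out its derivation (including the degenerate cases $\gamma_\eta=\pm1$), but this is just added detail rather than a different route.
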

\begin{proof}
 Consider the boundary term $ \mathrm{BTs}\left(v , T \right)$ defined in \eqref{eq:boundaryterm_101}.
With $\mathbf{v}^T\mathbf{T}  = \left(\mathbf{R}\mathbf{v}\right)^T\left(\mathbf{R}\mathbf{T}\right) = \sum_{\eta \in \{ l,m,n \} }{v_\eta T_\eta }$,  we have
  \begin{align}\label{eq:boundaryterm_2}
 \mathrm{BTs}\left(\bm{v} , \bm{T} \right)  
  =&\int_0^1 \int_0^1 \left(\left(J\sqrt{\xi_x^2 + \xi_y^2 + \xi_z^2}\right) \sum_{\eta \in \{ l,m,n \} } v_\eta T_\eta \right) \bigg|_{\xi = 1}\frac{dqdrds}{d\xi} \nonumber \\
  -& \sum_{\xi \in  \{ q, r, s \} } \int_0^1 \int_0^1 \left(\left(J\sqrt{\xi_x^2 + \xi_y^2 + \xi_z^2}\right)\sum_{\eta \in \{ l,m,n \} } v_\eta T_\eta \right)\bigg|_{\xi = 0} \frac{dqdrds}{d\xi} .
 %
  \end{align}
 Finally, the identity  \eqref{eq:simplify_3}  completes the proof of the lemma. 
\end{proof}

In the continuous setting, we can show that our PDE has finite energy controlled by the boundary terms, $\mathrm{BTs}$. 
To begin, for real functions we introduce the  $L^2$ inner product,
\begin{align}\label{eq:weighted_scalar_product}
\l \mathbf{Q}, \mathbf{F} \r \coloneqq \int_{\Omega}{\left(\mathbf{Q}^T\mathbf{F}\right) dxdydz},
\end{align}
and the corresponding energy-norm
\begin{align}\label{eq:physical_energy}
\|\mathbf{Q}\left(\cdot, \cdot, \cdot, t\right)\|_P^2 = \l\mathbf{Q}, \frac{1}{2} \mathbf{P}^{-1} \mathbf{Q}\r = \int_{\Omega}\left(\sum_{\eta \in \{x, y, z\} } \frac{\rho}{2} v_\eta^2  + \frac{1}{2}\boldsymbol{\sigma}^T{S}\boldsymbol{\sigma}\right)dxdydz. 
\end{align}
The weighted $L^2$-norm $\|\mathbf{Q}\left(\cdot, \cdot, \cdot, t\right)\|_P^2$ is the mechanical energy, which is the sum of the kinetic energy and the strain energy.
\begin{theorem}\label{theo:energy_estimate_cont}
    The transformed elastic wave equation \eqref{eq:transformedEQ} in curvilinear coordinates subject to the boundary conditions  \eqref{eq:BC_General2}
    satisfies the energy equation
    \begin{align*}
        \frac{d}{dt} \|\mathbf{Q}\left(\cdot, \cdot, \cdot, t\right)\|_P^2 = BTs \le 0.
    \end{align*}
    \begin{proof}
    Consider
    \begin{align}
        \frac{d}{dt} \|\mathbf{Q}\left(\cdot, \cdot, \cdot, t\right)\|_P^2 = \l \bm{Q} , P^{-1} \pf{}{t}  \bm{Q} \r  = \l \bm{Q}, \nabla \cdot \bm{F} (\bm{Q}) + \sum_{\xi\in \{q,r,s \} } \bm{B}_{\xi } (\nabla \bm{Q} ) \r.
    \end{align}
    Expanding the right hand side and applying integration by parts yields
{\small
    \begin{align}
       \sum_{\xi \in \{q,r,s \}} \l \bm{Q}, \pf{}{\xi} \bm{F}_{\xi} (\bm{Q}) \r  + \l \bm{Q}, \bm{B}_{\xi} (\nabla \bm{Q} ) \r &= \sum_{\xi \in \{q,r,s \} }  \left( \l\bm{Q},  \bm{B}_{\xi} (\nabla \bm{Q} ) \r- \l \pf{}{\xi} \bm{Q}, \bm{F}_{\xi} (\bm{Q}) \r \right) \\
       &\ + BTs(\bm{v}, \bm{T} ),
    \end{align}
    }
    which from Lemma \ref{lem:anti_sym_prop} and \ref{Lem:BTs} gives the result. 
    \end{proof}
\end{theorem}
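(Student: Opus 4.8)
The plan is to differentiate the energy norm \eqref{eq:physical_energy} in time, substitute the transformed equation \eqref{eq:transformedEQ}, integrate by parts once in each of the three coordinate directions, and then let the two lemmas do the work: Lemma \ref{lem:anti_sym_prop} to annihilate the volume contributions and Lemma \ref{Lem:BTs} to sign the surviving surface term. First I would use that $\mathbf{P}^{-1}$ is symmetric, positive definite and time-independent to write
\[
\frac{d}{dt}\|\mathbf{Q}\|_P^2 = \left\langle \mathbf{Q}, \mathbf{P}^{-1}\frac{\partial \mathbf{Q}}{\partial t}\right\rangle_\Omega .
\]
Since the integral is over the physical domain with measure $dx\,dy\,dz$, I would change variables to the computational cube $\widetilde\Omega$. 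The Jacobian of $\Phi^{-1}$ supplies $dx\,dy\,dz = J\,dq\,dr\,ds$, and because $\widetilde{\mathbf{P}}^{-1} = J\mathbf{P}^{-1}$ this factor is absorbed exactly, so the energy rate equals $\int_{\widetilde\Omega}\mathbf{Q}^T\widetilde{\mathbf{P}}^{-1}\partial_t\mathbf{Q}\,dq\,dr\,ds$. Substituting \eqref{eq:transformedEQ} then gives
\[
\frac{d}{dt}\|\mathbf{Q}\|_P^2 = \sum_{\xi\in\{q,r,s\}}\left(\left\langle \mathbf{Q}, \frac{\partial}{\partial \xi}\mathbf{F}_\xi(\mathbf{Q})\right\rangle + \left\langle \mathbf{Q}, \mathbf{B}_\xi(\nabla\mathbf{Q})\right\rangle\right),
\]
with all inner products now over $\widetilde\Omega$.

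The central step is integration by parts on the conservative term. For each $\xi$ I would write $\langle \mathbf{Q}, \partial_\xi\mathbf{F}_\xi\rangle = -\langle\partial_\xi\mathbf{Q},\mathbf{F}_\xi\rangle + \text{(boundary contribution)}$, where the boundary contribution is $\mathbf{Q}^T\mathbf{F}_\xi$ evaluated on the faces $\xi=0,1$. Because the six stress rows of $\mathbf{F}_\xi$ vanish, only the three velocity components of $\mathbf{Q}$ survive, and the face integrand is $v_x\,J(\xi_x\sigma_{xx}+\xi_y\sigma_{xy}+\xi_z\sigma_{xz})+\cdots$; recognising this as $J\sqrt{\xi_x^2+\xi_y^2+\xi_z^2}\,\mathbf{v}^T\mathbf{T}$ through the definitions \eqref{eq:normal_vector} of $\mathbf{n}$ and \eqref{eq:velocity_traction_rotation} of $\mathbf{T}$, the sum over $\xi\in\{q,r,s\}$ and $i\in\{0,1\}$ reconstructs exactly the boundary functional $BTs(\mathbf{v},\mathbf{T})$ of \eqref{eq:boundaryterm_101}. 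Regrouping yields
\[
\frac{d}{dt}\|\mathbf{Q}\|_P^2 = \sum_{\xi}\left(\langle \mathbf{Q}, \mathbf{B}_\xi(\nabla\mathbf{Q})\rangle - \langle \partial_\xi\mathbf{Q}, \mathbf{F}_\xi(\mathbf{Q})\rangle\right) + BTs(\mathbf{v},\mathbf{T}).
\]

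Finally, the Remark following \eqref{eq:transformedEQ} guarantees that the anti-symmetry identity of Lemma \ref{lem:anti_sym_prop} holds verbatim in curvilinear coordinates, so the integrand $(\partial_\xi\mathbf{Q})^T\mathbf{F}_\xi(\mathbf{Q}) - \mathbf{Q}^T\mathbf{B}_\xi(\nabla\mathbf{Q})$ vanishes pointwise and each volume bracket is zero. What remains is $\frac{d}{dt}\|\mathbf{Q}\|_P^2 = BTs$, and Lemma \ref{Lem:BTs} then delivers $BTs\le 0$. I expect the main obstacle to be the bookkeeping in the integration-by-parts step: one must confirm that the metric factors built into $\mathbf{F}_\xi$ combine with the traction and normal definitions so that the face integrand collapses to precisely $J\sqrt{\xi_x^2+\xi_y^2+\xi_z^2}\,\mathbf{v}^T\mathbf{T}$ — matching the weight in $BTs$ — rather than some larger contraction. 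Once that identification is made, the cancellation of the volume terms via Lemma \ref{lem:anti_sym_prop} is purely algebraic and routine.
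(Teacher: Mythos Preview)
Your proposal is correct and follows essentially the same approach as the paper's own proof: differentiate the energy, substitute \eqref{eq:transformedEQ}, integrate by parts in each coordinate direction, and invoke Lemma~\ref{lem:anti_sym_prop} to kill the volume terms and Lemma~\ref{Lem:BTs} to sign the boundary term. You supply more explicit bookkeeping on the change of variables $dx\,dy\,dz = J\,dq\,dr\,ds$ and on the identification of the face integrand with $J\sqrt{\xi_x^2+\xi_y^2+\xi_z^2}\,\mathbf{v}^T\mathbf{T}$ than the paper does, but the logical skeleton is identical.
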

In the next section, we will derive a numerical approximation of the transformed elastic wave equation \eqref{eq:transformedEQ} in curvilinear coordinates subject to the boundary conditions  \eqref{eq:BC_General2}. We will approximate spatial derivatives using upwind SBP operators and imposed boundary conditions using penalties. To guarantee numerical stability we will prove numerical results analogous to Theorem \ref{theo:energy_estimate_cont}. 
\section{Discretisation}\label{sec:spatial_approximation}
In this section, stable discrete numerical approximation of the transformed elastic wave equation \eqref{eq:transformedEQ} in curvilinear coordinates subject to the boundary conditions  \eqref{eq:BC_General2} is derived. 
We use upwind SBP operators \cite{Mattsson2017} to discretise the spatial derivatives, and keep the time variable continuous. 
The upwind SBP operators come in pairs, the forward difference operator $D_{+}$ and the backward difference operator $D_{-}$. 
Our ultimate goal is to carefully combine this pair of upwind SBP operators to preserve, in the discrete setting, the anti-symmetric property given in Lemma \ref{lem:anti_sym_prop}, and derive a conservative scheme. 
Boundary conditions are then  enforced weakly using penalties in a manner that leads to energy stability.

\subsection{Discrete spatial derivative}
    We will use a reference mesh that is uniform across each of the axes to discretise the reference computational cube $\Omegat = [0,1]^3$. 
    For each $\xi \in \{q,r,s\}$, consider the uniform discretisation of the unit interval $\xi \in [0, 1]$ 
    \begin{align}
        \xi_{i} \coloneqq i/n_{\xi} && i \in \{0,\dots, n_{\xi} \}, 
    \end{align}
    where $n_{\xi} + 1$ is the number of grid-points on the $\xi$-axis. 
    
    We will use upwind SBP operators introduced in \cite{Mattsson2017} to approximate the spatial derivatives, $\pf{}{\xi}$. By combining the operators $D_{-},D_{+}$ we can respect the integration by parts formula:
    \begin{align}\label{eq:IBP_cts}
		 \int_0^1 \pf{}{\xi} (f) g d \xi + \int_0^1 f \pf{}{\xi} (g) d \xi = f(1)g(1) - f(0)g(0).
	\end{align} 
	This will be critical in deriving a stable and conservative numerical approximation of the elastic wave equation in complex geometries.

	For each $\xi \in \{q,r,s\}$ define $H_{\xi} \coloneqq \text{diag}\left(h_0^{(\xi)} , \dots, h_{n_{\xi}}^{(\xi)}\right)$, with $h_j^{(\xi)} >0$ for all $j \in \{ 0, 1, \dots, n_\xi \}$.
	We mimic the integration by parts property through finding the dual pairing of linear operators $D_{+\xi},D_{-\xi} : \R^{n_{\xi} +1 } \mapsto \R^{n_{\xi} +1 }$ so that
	\begin{align}\label{eq:upw_SBP}
        (D_{+\xi} \bm{f} )^T H_{\xi} \bm{g} +  \bm{f}^T H_{\xi} (D_{-\xi }\bm{g}) = f(\xi_{n_{\xi}})g(\xi_{n_{\xi}}) - f(\xi_0)g(\xi_0),
    \end{align}
    for vectors $\bm{f} = (f(\xi_0), \dots , f(\xi_{n_{\xi}}) )^T$, $\bm{g} = (g(\xi_0), \dots , g(\xi_{n_{\xi}}) )^T$ sampled from weakly differentiable functions of the $\xi$ variable. 
    Furthermore, the matrix $D_{+\xi} + D_{+\xi}^T$ (or $D_{-\xi} + D_{-\xi}^T$) is negative semi-definite to introduce efficient numerical suppression of unresolved high frequency wave modes, for more details see \cite{Mattsson2017,DovgilovichSofronov2015}.
    
    We make the discussion more formal.
    \begin{definition}
		Let $D_{-\xi}$, $D_{+\xi} : \R^{n_\xi} \mapsto \R^{n_\xi}$ be linear operators that solve Equation \ref{eq:upw_SBP} for a diagonal weight matrix $H_\xi \in \R^{n_\xi \times n_\xi}$. 
		If the matrix $S_{+} \coloneqq {D_{+\xi} + D_{+\xi}^T}$ or $S_{-} \coloneqq {D_{-\xi} + D_{-\xi}^T}$ is also negative semi-definite, then the 3-tuple $(H_\xi,D_{-\xi},D_{+\xi})$ is called an upwind diagonal-norm dual-pair SBP operator.  
	\end{definition}
    We call $(H_{\xi},D_{-\xi},D_{+\xi})$ an upwind diagonal-norm dual-pair SBP operator of order $m$ if the accuracy conditions
	\begin{align} \label{eq:acc}
		    D_{\eta\xi}( \bm{\xi}^i) = i \bm{\xi}^{i-1}
	\end{align}
	are satisfied for all $i \in \{0,\dots,m\}$ and $\eta \in \{-,+\}$ where $\bm{\xi}^i \coloneqq (\xi_0^i, \dots, \xi_{n_\xi}^i)^T$. 
	
	Throughout this study, we will use diagonal diagonal-norm dual-pair SBP operators, and they will be simply referred to as upwind SBP operators. 
	
	Similar to traditional SBP operators, upwind SBP operators have higher accuracy in the interior, away from the boundaries. The accuracy of the operators is lowered close to the boundaries where special boundary closures are used. Upwind SBP operators with even-order $\left(2p\right)$-th accuracy in the interior are closed with $p$-th order accurate stencils close to boundaries. Upwind SBP operators with odd-order $\left(2p+1\right)$-th accuracy in the interior are closed with $p$-th order accurate stencils close to boundaries. These operators can yield $\left(p+1\right)$-th global order of accuracy.

	The 1D SBP operators can be extended to higher space dimensions using tensor products $\otimes$. Let $f(q,r,s)$ denote a 3D scalar function, and $f_{ijk} \coloneqq {f}(q_i,r_j, s_k)$ denote the corresponding 3D grid function.
	The  3D scalar grid function $f_{ijk}$ is rearranged row-wise as a vector $\bm{f}$ of length $n_qn_rn_s$. For $\xi \in \{q,r,s\}$ and $\eta \in \{-,+\}$ define:
    \begin{align}
        \bm{D}_{\eta\xi} \coloneqq  \bigotimes_{k \in \{q,r,s\} } ( \1_{k = \xi} D_{\eta k} + \1_{k \neq \xi} I_{n_k}), && \bm{H} \coloneqq  \bigotimes_{k \in \{q,r,s\} } H_{k},
    \end{align}
    where $I_{n_{\xi}}$ is the identity matrix of size $n_{\xi} \times n_{\xi}$, and we take $\1_{ k = \xi } \coloneqq \1_{ \{ \xi \} } (k)$ and $\1_{ k \neq \xi } \coloneqq 1 -\1_{ k = \xi }$. 
    So $\bm{D}_{\xi\eta}$  approximates the partial derivative operator in the $\xi$ direction.
    An inner product on $\R^{n_{q} + 1} \times \R^{n_{r} + 1} \times \R^{n_{s} + 1}$ is induced by $\bm{H}$ through 
     \begin{align}
        \l \bm{g} , \bm{f} \r_{\bm{H}} \coloneqq \bm{g}^T \bm{H} \bm{f} = \sum_{i=0}^{n_q} \sum_{j=0}^{n_r} \sum_{k=0}^{n_s}f_{ijk}g_{ijk} h_i^{(q)} h_j^{(r)} h_k^{(s)}, \end{align}
        where  the corresponding discrete energy-norm is given by
     \begin{align}\label{eq:physical_energy_discrete}
         \|\mathbf{Q}\left(\cdot, \cdot, \cdot, t\right)\|_{HP}^2 = \l\mathbf{Q}, \frac{1}{2} \mathbf{P}^{-1} \mathbf{Q}\r_{\bm{H}}. 
     \end{align}
    Further, we have the multi-dimensional SBP property 
    \begin{align}\label{eq:multiDSBP}
        \sum_{\xi \in \{q,r,s\} } \left( \l \bm{D}_{-\xi} (\bm{f}), \bm{g}  \r_{\bm{H}} + \l \bm{f} , \bm{D}_{+\xi} (\bm{g}) \r_{\bm{H}} \right) =  \sum_{\xi \in \{q,r,s\} }S_{\xi}\left(\bm{f}\bm{g}\right), 
    \end{align}
    where $S_{\xi}\left(\bm{f}, \bm{g}\right)$ in the right hand side is the surface cubature, defined by
    \begin{align}\label{eq:bt_q}
    S_{q}\left(\bm{f}\bm{g}\right) = \sum_{i \in \{ 0, n_q \} } \left(-1\right)^{q_i+1}\sum_{j=0}^{n_r} \sum_{k=0}^{n_s}f_{ijk}g_{ijk}  h_j^{(r)} h_k^{(s)},
	\end{align}
	\begin{align}\label{eq:bt_r}
    S_{r}\left(\bm{f} \bm{g}\right) = \sum_{j \in \{ 0, n_r \} } \left(-1\right)^{r_j+1}\sum_{i=0}^{n_q} \sum_{k=0}^{n_s}f_{ijk}g_{ijk}  h_i^{(q)} h_k^{(s)},
	\end{align}
	\begin{align}\label{eq:bt_s}
    S_{s}\left(\bm{f} \bm{g}\right) = \sum_{k \in \{ 0, n_s \} } \left(-1\right)^{s_k+1}\sum_{i=0}^{n_q} \sum_{j=0}^{n_r}f_{ijk}g_{ijk}  h_i^{(q)} h_j^{(r)} .
	\end{align}
	Note that $\xi_0 = 0$ and $\xi_{n_\xi} = 1$, for all $\xi \in \{ q, r, s \}$.

    \subsection{Numerical approximation in space}
    Consider the transformed elastic wave equation \eqref{eq:transformedEQ} in curvilinear coordinates, and approximate the spatial operators using the upwind SBP operators. Note that every 3D scalar grid function is rearranged row-wise as a vector of length $n_qn_rn_s$. 
    Therefore the unknown vector field $\mathbf{Q}$ is a vector of length $9 n_qn_rn_s$.
    
    The semi-discrete approximation reads 
    \begin{align}\label{eq:gen_hyp_transformed_discrete}
    \widetilde{\mathbf{P}}^{-1} \frac{d }{d t} {\mathbf{Q}} = \grad_{D_{-}}\bullet {\mathbf{F} \left({\mathbf{Q}} \right)} + \sum_{\xi \in \{q, r, s\} }\mathbf{B}_\xi\left(\grad_{D_{+}}{\mathbf{Q}}\right),
    \end{align}
where the discrete operator  $\grad_{D_{\eta}} = \left(\mathbf{D}_{\eta q}, \mathbf{D}_{\eta r}, \mathbf{D}_{\eta s}\right)^T$, with $\eta \in \{+, -\}$, is analogous to the continuous gradient operator $\grad = \left(\partial/\partial q, \partial/\partial r, \partial/\partial s\right)^T$. 
In  $\grad_{D_{\eta}}$ we have replaced the continuous derivative operators in $\grad$ with their discrete counterparts. 

\begin{remark}
The backward difference operator $D_{-}$ is used to approximate the spatial derivative for the conservative flux term, whilst the forward difference operator $D_{+}$ is an approximant for the non-conservative product term. 
This combination of upwind operators and the specific choice of the anti-symmetric form \eqref{eq:transformedEQ} is critical to deriving a conservative and energy stable scheme for the elastic wave equation in complex geometries.
\end{remark}
Note that we have not imposed boundary conditions yet. 
Numerical boundary treatment will be discussed in the next subsection. 

We will now prove the discrete equivalence of Lemma \ref{lem:anti_sym_prop}.

 \begin{lemma}\label{lem:anti_sym_prop_disc}
        Consider the semi-discrete approximation given in Equation \ref{eq:gen_hyp_transformed_discrete}. We have the discrete anti-symmetric form
       {\small
        \begin{align*}
         \left(\left(\left(I_9\otimes\bm{D}_{+\xi}\right)\bm{Q}\right)^T \bm{F}_{\xi}\left(\bm{Q}\right)
            - \bm{Q}^T\mathbf{B}_\xi\left(\grad_{D_{+}}\bar{\mathbf{Q}}\right)\right)= 0.
        \end{align*}
        }
    \end{lemma}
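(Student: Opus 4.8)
The plan is to exploit the fact that the continuous identity of Lemma~\ref{lem:anti_sym_prop} is not an analytic statement but a purely \emph{algebraic} one: it holds for arbitrary values of the stress components and of the symbols $\partial v_\eta/\partial\xi$, without ever invoking a property of differentiation. First I would record the two structural facts that drive the cancellation. The flux $\bm{F}_\xi(\bm{Q})$ has nonzero entries only in its first three (momentum) slots, each a linear combination of the stresses $\sigma_{\cdot\cdot}$ with coefficients $e_{\xi x},e_{\xi y},e_{\xi z}$; the non-conservative term $\bm{B}_\xi$ has nonzero entries only in its last six (Hooke) slots, each a linear combination of the velocity derivatives with the same coefficients. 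Consequently, in $\left(\partial\bm{Q}/\partial\xi\right)^T\bm{F}_\xi(\bm{Q})$ only the velocity-derivative entries of $\partial\bm{Q}/\partial\xi$ survive, while in $\bm{Q}^T\bm{B}_\xi$ only the stress entries of $\bm{Q}$ survive, and the two expressions agree monomial-by-monomial.

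Second, I would observe that the discretisation feeds \emph{the same} discrete velocity derivative into both terms of the claimed identity. The momentum slots of $\left(I_9\otimes\bm{D}_{+\xi}\right)\bm{Q}$ carry precisely $\bm{D}_{+\xi}v_x,\bm{D}_{+\xi}v_y,\bm{D}_{+\xi}v_z$, and these are exactly the quantities appearing in the $\xi$-slot of $\grad_{D_{+}}\bm{Q}$ that enters $\bm{B}_\xi$ in \eqref{eq:gen_hyp_transformed_discrete}. Likewise the stresses entering $\bm{F}_\xi(\bm{Q})$ in the first term and multiplying $\bm{B}_\xi$ through $\bm{Q}^T$ in the second are the same undifferentiated grid values. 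Hence, setting at each grid node $w_\eta \coloneqq \left(\bm{D}_{+\xi}\bm{v}\right)_\eta$ for $\eta\in\{x,y,z\}$ and keeping the nodal stresses, both discrete terms arise from the continuous ones by the single consistent substitution $\partial v_\eta/\partial\xi \mapsto w_\eta$.

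The proof then reduces to invoking Lemma~\ref{lem:anti_sym_prop} with $\partial v_\eta/\partial\xi$ replaced by $w_\eta$: at every node the first discrete term
\[
  w_x\!\left(e_{\xi x}\sigma_{xx}+e_{\xi y}\sigma_{xy}+e_{\xi z}\sigma_{xz}\right)
  + w_y\!\left(e_{\xi x}\sigma_{xy}+e_{\xi y}\sigma_{yy}+e_{\xi z}\sigma_{yz}\right)
  + w_z\!\left(e_{\xi x}\sigma_{xz}+e_{\xi y}\sigma_{yz}+e_{\xi z}\sigma_{zz}\right)
\]
coincides term-for-term with $\bm{Q}^T\bm{B}_\xi$ evaluated at the same node, so the node-wise difference vanishes; reading the statement node-wise (or summing over nodes) then gives the result. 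I expect the only genuine obstacle to be bookkeeping: verifying that it is the forward operator $\bm{D}_{+\xi}$, and not its backward partner, that acts on the velocities in \emph{both} places. It is exactly this consistent pairing, built into the anti-symmetric split form and the choice of $\grad_{D_{+}}$ for the non-conservative product in \eqref{eq:gen_hyp_transformed_discrete}, that makes the substitution legitimate; had $\bm{D}_{-\xi}$ appeared in one term and $\bm{D}_{+\xi}$ in the other, the monomials $w_\eta\sigma_{\cdot\cdot}$ would no longer match and a nonzero remainder would survive. The crux is therefore to make this matching explicit rather than to carry out any nontrivial estimate.
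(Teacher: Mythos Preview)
Your proposal is correct and is essentially the same approach as the paper's own proof, which simply reads ``As before, expanding the matrix multiplication and simplifying yields the result.'' You have merely made explicit the bookkeeping behind that expansion---namely that the identity is purely algebraic and that the same forward operator $\bm{D}_{+\xi}$ supplies the velocity derivatives in both terms---so your argument is a faithful (and more detailed) rendering of the paper's one-line proof.
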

    \begin{proof}
        As before, expanding the matrix multiplication and simplifying yields the result.
    \end{proof}
    
    Further, for a 3D scalar field $f_{ijk} = f(x_i, y_j, z_k)$ we also introduce the surface cubature
\begin{align}\label{eq:surface_cubature_1D}
    &\mathbb{I}_{q_i}\left(\mathbf{f}\right) = \sum_{j=0}^{n_r} \sum_{k=0}^{n_s} \left(J_{ijk}\sqrt{q_{xijk}^2+ q_{yijk}^2 + q_{zijk}^2} f_{ijk}\right)  h_j^{(r)} h_k^{(s)}, \\
    & \mathbb{I}_{r_j}\left(\mathbf{f}\right) = \sum_{i=0}^{n_q} \sum_{k=0}^{n_s}\left(J_{ijk}\sqrt{r_{xijk}^2+ r_{yijk}^2 + r_{zijk}^2} f_{ijk}\right)   h_i^{(q)} h_k^{(s)}, \\
    &\mathbb{I}_{s_k}\left(\mathbf{f}\right) = \sum_{i=0}^{n_q} \sum_{j=0}^{n_r}\left(J_{ijk}\sqrt{s_{xijk}^2+ s_{yijk}^2 + s_{zijk}^2} f_{ijk}\right)   h_i^{(q)} h_j^{(r)}
	\end{align}
	and
	\begin{align}\label{eq:surface_cubature_3D}
   \mathbb{I}\left(\mathbf{f}\right) = \sum_{\xi \in \{ q, r, s \} }\sum_{i \in \{ 0, n_\xi \} }\left(-1\right)^{\xi_i} \mathbb{I}_{\xi_i}\left(\mathbf{f}\right).
	\end{align}
	Therefore we have
	\begin{align}\label{eq:disc_boundary_term}
	    \mathbb{I}\left(\bm{v}^T\bm{T}\right) = \sum_{\xi \in \{q,r,s\} }S_{\xi}\left(\bm{J}\sqrt{\bm{\xi}_x^2+\bm{\xi}_y^2+\bm{\xi}_z^2 },  \bm{v}^T \bm{T}\right),
	\end{align}
	where the surface cubature $S_{\xi}$ is defined in \eqref{eq:bt_q}--\eqref{eq:bt_s}.
	Here $S_{\xi}$ approximates integrals on faces of $\Omegat$, whilst $\mathbb{I}_{\xi_\eta}$ approximates integrals over $\Omega$ along the slice $\xi_\eta$. 
	Thus the boundary surface term $\mathbb{I}\left(\bm{v}^T\bm{T}\right)$ is a numerical approximation of continuous counterpart $BTs\left(\bm{v}, \bm{T}\right)$ defined in \eqref{eq:boundaryterm_101}.
    \begin{theorem}\label{theo_sbp_sem_discrete}
    Consider the semi-discrete approximation \eqref{eq:gen_hyp_transformed_discrete} of the elastic wave equation. 
    We have
    \begin{align*}
        \frac{d}{dt} \|\mathbf{Q}\left(\cdot, \cdot, \cdot, t\right)\|_{HP}^2 =  \mathbb{I}\left(\bm{v}^T\bm{T}\right),
    \end{align*}
    where $\mathbb{I}\left(\bm{v}^T\bm{T}\right)$ is the surface term defined in \eqref{eq:disc_boundary_term}.
    \begin{proof}
    Consider
    {\small
    \begin{align}
        \frac{d}{dt} \|\mathbf{Q}\left(\cdot, \cdot, \cdot, t\right)\|_{HP}^2 = \l \bm{Q} , P^{-1} \pf{}{t}  \bm{Q} \r_{H}  = \l \bm{Q}, \grad_{D_{-}}\bullet {\mathbf{F} \left({\mathbf{Q}} \right)} + \sum_{\xi \in \{ q, r, s \} }\mathbf{B}_\xi\left(\grad_{D_{+}}{\mathbf{Q}}\right) \r_{H}.
    \end{align}
    }
    Expanding the right hand side and applying the multi-dimensional SBP property \eqref{eq:multiDSBP} yields
    \begin{align*}
       &\sum_{\xi \in \{q,r,s\}} \left(\l \bm{Q},  \left(I_9\otimes\bm{D}_{-\xi}\right)\bm{F}_{\xi} \left(\bm{Q}) \r_{H}  + \l \bm{Q},  \mathbf{B}_\xi\left(\grad_{D_{+}}\bar{\mathbf{Q}}\right)\right) \r_{H}\right) \\
       &=  \mathbb{I}\left(\bm{v}^T\bm{T}\right) + \sum_{\xi \in \{q,r,s\}} \left( \l\bm{Q},  \bm{B}_{\xi} (\grad_{D_{+}} \bm{Q} ) \r_{H}- \l \left(I_9\otimes\bm{D}_{+\xi}\right) \bm{Q}, \bm{F}_{\xi} (\bm{Q}) \r_{H}\right) ,
    \end{align*}
    which from Lemma \ref{lem:anti_sym_prop_disc} gives the result. 
    \end{proof}
\end{theorem}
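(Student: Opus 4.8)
The plan is to replay the continuous energy argument of Theorem~\ref{theo:energy_estimate_cont} at the discrete level, with the multi-dimensional SBP relation \eqref{eq:multiDSBP} playing the role of integration by parts and the discrete anti-symmetry identity of Lemma~\ref{lem:anti_sym_prop_disc} playing the role of Lemma~\ref{lem:anti_sym_prop}.

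First I would differentiate the discrete energy. Since $\widetilde{\bm{P}}$ is symmetric and time-independent (being built from the material parameters and the Jacobian only), and since $\bm{H}$ is diagonal and acts on the grid indices while $\widetilde{\bm{P}}^{-1}$ acts on the nine-component index, the operator $\bm{H}\widetilde{\bm{P}}^{-1}$ is symmetric; the factor one-half in the energy norm then cancels the two cross terms of the product rule and leaves a single term,
\[
\frac{d}{dt}\|\bm{Q}\|_{HP}^2 = \l \bm{Q}, \widetilde{\bm{P}}^{-1}\frac{d}{dt}\bm{Q}\r_{\bm{H}}.
\]
Substituting the semi-discrete system \eqref{eq:gen_hyp_transformed_discrete} and expanding the discrete divergence gives
\[
\frac{d}{dt}\|\bm{Q}\|_{HP}^2 = \sum_{\xi\in\{q,r,s\}}\Big(\l \bm{Q}, (I_9\otimes\bm{D}_{-\xi})\bm{F}_\xi(\bm{Q})\r_{\bm{H}} + \l \bm{Q}, \bm{B}_\xi(\grad_{D_+}\bm{Q})\r_{\bm{H}}\Big).
\]

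Next I would apply the SBP property, lifted component-wise through $I_9\otimes(\cdot)$, to each conservative flux term. This moves the backward operator $\bm{D}_{-\xi}$ onto $\bm{Q}$ as the forward operator $\bm{D}_{+\xi}$ and produces the surface cubature $S_\xi$,
\[
\l \bm{Q}, (I_9\otimes\bm{D}_{-\xi})\bm{F}_\xi\r_{\bm{H}} = -\l (I_9\otimes\bm{D}_{+\xi})\bm{Q}, \bm{F}_\xi\r_{\bm{H}} + S_\xi(\bm{Q}^T\bm{F}_\xi).
\]
The key algebraic observation is that the block structure of $\bm{F}_\xi$ forces $\bm{Q}^T\bm{F}_\xi = J\sqrt{\xi_x^2+\xi_y^2+\xi_z^2}\,\bm{v}^T\bm{T}$, so that summing the surface terms over $\xi$ and comparing with \eqref{eq:disc_boundary_term} identifies them exactly with $\mathbb{I}(\bm{v}^T\bm{T})$.

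The remaining volume contribution is $\sum_\xi\big(\l \bm{Q}, \bm{B}_\xi(\grad_{D_+}\bm{Q})\r_{\bm{H}} - \l (I_9\otimes\bm{D}_{+\xi})\bm{Q}, \bm{F}_\xi\r_{\bm{H}}\big)$, whose $\xi$-summand is precisely the pointwise-vanishing quantity of Lemma~\ref{lem:anti_sym_prop_disc}; weighting by the diagonal matrix $\bm{H}$ and summing over the grid therefore annihilates the whole bracket, leaving $\frac{d}{dt}\|\bm{Q}\|_{HP}^2 = \mathbb{I}(\bm{v}^T\bm{T})$. The step I expect to require the most care is the interface between the abstract SBP surface term $S_\xi$ and the concrete boundary functional: one must verify both that the scalar relation \eqref{eq:upw_SBP} lifts correctly through the tensor product and the nine-component inner product, and the identity $\bm{Q}^T\bm{F}_\xi = J\sqrt{\xi_x^2+\xi_y^2+\xi_z^2}\,\bm{v}^T\bm{T}$ that recasts $S_\xi(\bm{Q}^T\bm{F}_\xi)$ as the physically meaningful traction work $\mathbb{I}(\bm{v}^T\bm{T})$.
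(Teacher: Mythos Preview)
Your proposal is correct and follows essentially the same route as the paper: differentiate the discrete energy, substitute the semi-discrete system, apply the multi-dimensional SBP property \eqref{eq:multiDSBP} to shift $\bm{D}_{-\xi}$ onto $\bm{Q}$ as $\bm{D}_{+\xi}$ and spawn the surface cubature, then invoke Lemma~\ref{lem:anti_sym_prop_disc} to kill the residual volume terms. In fact you are slightly more explicit than the paper in justifying the identification $S_\xi(\bm{Q}^T\bm{F}_\xi)=\mathbb{I}(\bm{v}^T\bm{T})$ via $\bm{Q}^T\bm{F}_\xi = J\sqrt{\xi_x^2+\xi_y^2+\xi_z^2}\,\bm{v}^T\bm{T}$, which the paper leaves implicit.
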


\section{Boundary treatment}

We will now implement the boundary conditions \eqref{eq:BC_General2} weakly using penalties. 
The idea is to impose the boundary conditions as Simultaneous Approximating Terms (SAT) in \eqref{eq:gen_hyp_transformed_discrete} with appropriate penalty parameters such that the numerical boundary terms do not permit energy growth.
We will choose penalty parameters such that a discrete energy estimate is derived.

The semi-discrete approximation with weak enforcement of boundary conditions is
  \begin{align}\label{eq:gen_hyp_transformed_discrete_SAT}
\widetilde{\mathbf{P}}^{-1} \frac{d }{d t} {\mathbf{Q}} = \grad_{D_{-}}\bullet {\mathbf{F} \left({\mathbf{Q}} \right)} + \sum_{\xi \in \{ q, r, s \} }\mathbf{B}_\xi\left(\grad_{D_{+}}{\mathbf{Q}}\right) + \sum_{\substack{\xi \in \{q,r,s\} \\ i \in \{0, n_\xi \} } }\mathbf{SAT}_{\xi, i}\left({\mathbf{Q}}\right),
\end{align}
where $\mathbf{SAT}_{\xi, i}$ are penalty terms added to the discrete equation \eqref{eq:gen_hyp_transformed_discrete} at the boundaries to enforce the boundary conditions \eqref{eq:BC_General2}. The SAT penalty terms are not unique, they are designed such that the boundary procedure is consistent and the discrete approximation is energy stable. We will consider first the case of a free-surface boundary condition, and proceed later to the general case. 

\subsection{SAT term for the free-surface boundary condition}
We consider specifically the free-surface boundary condition at all boundary surfaces, $F_{\xi,0}, \ F_{\xi,1}$ for all $\xi \in \{q,r,s\}$.   
With the free-surface boundary condition, at $F_{\xi,0}, \ F_{\xi,1}$, the traction vector vanishes $\left(\bm{T}_x, \bm{T}_y, \bm{T}_z\right) = 0$. We set the SAT terms
{
\begin{align}\label{eq:SAT_free_surface}
  \mathbf{SAT}_{\xi, 0} \ &= \ \ \bm{H}_\xi^{-1}\mathbf{e}_{0_\xi}\bm{J}\sqrt{\bm{\xi}_x^2 + \bm{\xi}_y^2 + \bm{\xi}_z^2}\left(\bm{T}_x, \bm{T}_y, \bm{T}_z, \bm{0}, \bm{0}, \bm{0}, \bm{0}, \bm{0}, \bm{0}\right)^T,  \\
  \nonumber
  \mathbf{SAT}_{\xi, n_\xi}  &= -\bm{H}_\xi^{-1}\mathbf{e}_{n_\xi}\bm{J}\sqrt{\bm{\xi}_x^2 + \bm{\xi}_y^2 + \bm{\xi}_z^2}\left(\bm{T}_x, \bm{T}_y, \bm{T}_z, \bm{0}, \bm{0}, \bm{0}, \bm{0}, \bm{0}, \bm{0}\right)^T,
\end{align}
}
where
\begin{align*}
\centering
&\bm{H}_q = \left(I_9 \otimes H_q \otimes I_{n_r} \otimes I_{n_s}\right), \quad
\bm{H}_r = \left(I_9 \otimes I_{n_q} \otimes H_r \otimes I_{n_s}\right), \\ 
&\bm{H}_s = \left(I_9 \otimes I_{n_q} \otimes I_{n_r} \otimes H_s\right),
\end{align*}
\begin{align*}
    \mathbf{e}_{0_q} = (I_9 \otimes e_{0_q}e_{0_q}^T \otimes I_{n_r} \otimes I_{n_s}), && \mathbf{e}_{n_q} = (I_9 \otimes e_{n_q}e_{n_q}^T \otimes I_{n_r} \otimes I_{n_s}), \\
    \mathbf{e}_{0_r} = (I_9 \otimes I_{n_q} \otimes e_{0_r}e_{0_r}^T \otimes I_{n_s}), && 
    \mathbf{e}_{n_r} = (I_9 \otimes I_{n_q} \otimes e_{n_r}e_{n_r}^T \otimes I_{n_s}), \\
    \mathbf{e}_{0_s} = (I_9 \otimes I_{n_q}  \otimes I_{n_r} \otimes e_{0_s}e_{0_s}^T), &&
    \mathbf{e}_{n_s} = (I_9 \otimes I_{n_q} \otimes I_{n_r} \otimes e_{n_s}e_{n_s}^T),
\end{align*}
$$
e_{0_\xi} = \left(1, 0, 0, \cdots, 0\right)^T, \ 
e_{n_\xi} = \left(0, 0, 0, \cdots, 1\right)^T.
$$
Here $I_9$ and $I_{n_\xi}$ are identity matrices of size  $9\times 9$ and $n_\xi \times n_\xi$, respectively, and $\mathbf{e}_{0_\xi}$, $\mathbf{e}_{n_\xi}$ are boundary projection operators.

We state the first main result
\begin{theorem}\label{theo:free_surface_bc_procedure}
    Consider the semi-discrete approximation \eqref{eq:gen_hyp_transformed_discrete_SAT} of the elastic wave equation with the SAT terms $\mathbf{SAT}_{\xi, i}$ defined in \eqref{eq:SAT_free_surface}.
    We have
    \begin{align*}
        \frac{d}{dt} \|\mathbf{Q}\left(\cdot, \cdot, \cdot, t\right)\|_{HP}^2 = 0.
    \end{align*}
    \begin{proof}
    Consider
    {\small
    \begin{align*}
        \frac{d}{dt} \|\mathbf{Q}\left(\cdot, \cdot, \cdot, t\right)\|_{HP}^2 = \l \bm{Q} , P^{-1} \pf{}{t}  \bm{Q} \r_{H}  &= \l \bm{Q}, \grad_{D_{-}}\bullet {\mathbf{F} \left({\mathbf{Q}} \right)} + \sum_{\xi \in \{ q, r, s \} }\mathbf{B}_\xi\left(\grad_{D_{+}}{\mathbf{Q}}\right) \r_{H} \\
        &+  \l \bm{Q},\sum_{\substack{\xi \in \{q,r,s\} \\ i \in \{0, 1 \} } }\mathbf{SAT}_{\xi, i}\left({\mathbf{Q}}\right)\r_{H}.
    \end{align*}
    }
    By Theorem \ref{theo_sbp_sem_discrete} we have
    \begin{align}
        \frac{d}{dt} \|\mathbf{Q}\left(\cdot, \cdot, \cdot, t\right)\|_{HP}^2  =  \mathbb{I}\left(\bm{v}^T\bm{T}\right) +   \sum_{\substack{\xi \in \{q,r,s\} \\ i \in \{0, n_{\xi} \} } }\l \bm{Q},\mathbf{SAT}_{\xi, i}\left({\mathbf{Q}}\right)\r_{H},
    \end{align}
    with
    {
    $$
    \sum_{i \in \{ 0,n_{\xi} \} } \l \bm{Q}, \mathbf{SAT}_{\xi, i} \r_{H} = -\sum_{i\in \{ 0,n_{\xi} \} } (-1)^{\xi_i+1} \mathbb{I}_{\xi_i}\left(\mathbf{v}^T\mathbf{T}\right) 
    $$
    }
    where $\xi_0 =0$ and $\xi_{n_\xi} = 1$.
    We therefore have
    $$
    \sum_{\substack{\xi \in \{q,r,s\} \\ i \in \{0, n_{\xi} \} } }\l \bm{Q},\mathbf{SAT}_{\xi, i}\left({\mathbf{Q}}\right)\r_{H}= - \mathbb{I}\left(\bm{v}^T\bm{T}\right),
    $$
    which  completes the proof. 
    \end{proof}
\end{theorem}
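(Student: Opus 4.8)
The plan is to start from the time derivative of the discrete energy and invoke the already-established machinery rather than recompute it from scratch. First I would write
\begin{align*}
    \frac{d}{dt} \|\mathbf{Q}\|_{HP}^2 = \l \bm{Q}, \grad_{D_{-}}\bullet \mathbf{F}(\mathbf{Q}) + \sum_{\xi}\mathbf{B}_\xi(\grad_{D_{+}}\mathbf{Q}) \r_{H} + \sum_{\xi, i}\l \bm{Q}, \mathbf{SAT}_{\xi,i}(\mathbf{Q})\r_{H},
\end{align*}
splitting the right-hand side into the ``bulk'' contribution and the penalty (SAT) contribution. The bulk contribution is handled immediately by Theorem \ref{theo_sbp_sem_discrete}, which identifies it with the discrete surface term $\mathbb{I}(\bm{v}^T\bm{T})$. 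So the entire problem reduces to evaluating the inner product of $\bm{Q}$ against the SAT terms and showing it cancels $\mathbb{I}(\bm{v}^T\bm{T})$ exactly.

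The core computation is therefore to evaluate $\sum_{i\in\{0,n_\xi\}}\l \bm{Q}, \mathbf{SAT}_{\xi,i}\r_{H}$ for each fixed $\xi$. Here I would exploit the structure of the SAT terms in \eqref{eq:SAT_free_surface}: the factor $\bm{H}_\xi^{-1}$ cancels against the corresponding block of the norm weight $\bm{H}$, the boundary projector $\mathbf{e}_{0_\xi}$ (respectively $\mathbf{e}_{n_\xi}$) restricts the sum to the boundary slice $\xi=0$ (respectively $\xi=1$), and the metric factor $\bm{J}\sqrt{\bm{\xi}_x^2+\bm{\xi}_y^2+\bm{\xi}_z^2}$ together with the surviving transverse weights reproduces exactly the surface cubature $\mathbb{I}_{\xi_i}$. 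Since the SAT term only populates the first three (velocity) components with the traction vector $(\bm{T}_x,\bm{T}_y,\bm{T}_z)$, contracting with $\bm{Q}=(\bm{v},\bm{\sigma})^T$ picks out precisely $\bm{v}^T\bm{T} = v_xT_x+v_yT_y+v_zT_z$ on each boundary slice. Tracking the signs $(+,-)$ between the $i=0$ and $i=n_\xi$ terms and matching them to the $(-1)^{\xi_i+1}$ convention in \eqref{eq:surface_cubature_3D}, this yields the intermediate identity
\begin{align*}
    \sum_{i \in \{ 0,n_{\xi} \} } \l \bm{Q}, \mathbf{SAT}_{\xi, i} \r_{H} = -\sum_{i\in \{ 0,n_{\xi} \} } (-1)^{\xi_i+1} \mathbb{I}_{\xi_i}(\mathbf{v}^T\mathbf{T}).
\end{align*}

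Summing this over $\xi \in \{q,r,s\}$ and comparing with the definition \eqref{eq:surface_cubature_3D} of $\mathbb{I}$ gives $\sum_{\xi,i}\l \bm{Q},\mathbf{SAT}_{\xi,i}\r_{H} = -\mathbb{I}(\bm{v}^T\bm{T})$, which exactly annihilates the bulk surface term supplied by Theorem \ref{theo_sbp_sem_discrete}, leaving $\frac{d}{dt}\|\mathbf{Q}\|_{HP}^2 = 0$.

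The main obstacle is purely bookkeeping rather than conceptual: I expect the delicate step to be verifying that the scalar sign conventions line up. In particular I would need to be careful that the index convention $\xi_0=0$, $\xi_{n_\xi}=1$ used in $(-1)^{\xi_i}$ for $\mathbb{I}$ matches the $(-1)^{i+1}$ orientation factor in the continuous boundary term \eqref{eq:boundaryterm_101} and the $(-1)^{\xi_i+1}$ factors in the surface cubatures $S_\xi$ of \eqref{eq:bt_q}--\eqref{eq:bt_s}, so that the explicit $+$ sign on $\mathbf{SAT}_{\xi,0}$ and $-$ sign on $\mathbf{SAT}_{\xi,n_\xi}$ produce the correct cancellation at each face. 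Because the free-surface condition sets $\bm{T}=0$ weakly, I do not need the boundary-condition inequalities \eqref{eq:simplify_3} here; exact energy conservation follows from the penalty term reproducing the negative of the surface term identically, which is cleaner than the general case. Once the slice-by-slice identification of $\l\bm{Q},\mathbf{SAT}_{\xi,i}\r_H$ with $\mathbb{I}_{\xi_i}(\bm{v}^T\bm{T})$ is established, the result is immediate.
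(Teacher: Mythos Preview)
Your proposal is correct and follows essentially the same route as the paper: split into bulk plus SAT, invoke Theorem~\ref{theo_sbp_sem_discrete} to turn the bulk into $\mathbb{I}(\bm{v}^T\bm{T})$, then verify that $\sum_{i}\l\bm{Q},\mathbf{SAT}_{\xi,i}\r_H$ reproduces the surface cubature with the opposite sign so that the two cancel. You even supply slightly more detail than the paper on why the $\bm{H}_\xi^{-1}$, the boundary projectors, and the metric factor conspire to produce $\mathbb{I}_{\xi_i}(\bm{v}^T\bm{T})$, and you are right that the only nontrivial point is the sign bookkeeping.
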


\subsection{SAT terms for general boundary conditions}
We will now construct the SAT terms for the general boundary condition \eqref{eq:BC_General2}. Similar to the DG framework \cite{DuruGabrielIgel2017,ExaHyPE2019,Duru_exhype_2_2019}, a weak boundary procedure can be derived by constructing boundary data, $\widehat{v}_\eta, \widehat{T}_\eta$, which are the solution of a Riemann-like problem constrained to satisfy the boundary condition \eqref{eq:BC_General2} exactly. SAT penalty terms are constructed by penalizing data, that is $\widehat{v}_\eta, \widehat{T}_\eta$, against the in-going waves only. The construction of boundary data, $\widehat{v}_\eta, \widehat{T}_\eta$, can be found in Appendix \ref{sec:hat_variables}. We also refer the reader to  ~\cite{DuruGabrielIgel2017,ExaHyPE2019} for more detailed discussions. 
  
 Introduce the penalty terms
    {
    \begin{align}\label{eq:penalty_terms}
  &{G}_\eta = \frac{1}{2} {Z}_\eta \left({v}_\eta - \widehat{{v}}_\eta \right)- \frac{1}{2}\left({T}_\eta  - \widehat{{T}}_\eta \right)\Big|_{\xi = 0},   \quad \widetilde{G}_\eta \coloneqq \frac{1}{{Z}_\eta}{G}_\eta  ,
  \\
  \nonumber
  &{G}_\eta  = \frac{1}{2} {Z}_\eta\left({v}_\eta  - \widehat{{v}}_\eta \right)+ \frac{1}{2}\left({T}_\eta  - \widehat{{T}}_\eta \right)\Big|_{\xi = 1}, \quad \widetilde{G}_\eta\coloneqq \frac{1}{{Z}_\eta }{G}_\eta .
  \end{align}
  }
  The penalty terms are computed in the transformed coordinates $l,m,n$. We will now  rotate them to the physical coordinates $x,y,z$, we  have
  \begin{align}\label{eq:rotate_back_forth}
  {\mathbf{G}} := \begin{pmatrix}
{G}_{x} \\
{G}_{y} \\
{G}_{z}
\end{pmatrix}
 = \mathbf{R}^T\begin{pmatrix}
{G}_{n} \\
{G}_{m} \\
{G}_{l}
\end{pmatrix},
\quad 
\widetilde{\mathbf{G}}:= 
  \begin{pmatrix}
\widetilde{G}_{x} \\
\widetilde{G}_{y} \\
\widetilde{G}_{z}
\end{pmatrix} = \mathbf{R}^T\begin{pmatrix}
\widetilde{G}_{n} \\
\widetilde{G}_{m} \\
\widetilde{G}_{l}
\end{pmatrix}.
\end{align}
  Note that
\begin{equation}\label{eq:identity_pen}
\begin{split}
 \left(\mathbf{v}^T \mathbf{G} - \mathbf{T}^T \widetilde{\mathbf{G}} + \mathbf{v}^T\mathbf{T}\right)&\Big|_{\xi = 0} = \sum_{\eta \in \{ l,m,n \} } \left( v_\eta  G_\eta    - \frac{1}{Z_\eta  }T_\eta   G_\eta + v_\eta  T_\eta \right)\Big|_{\xi = 0} \\
 &= \sum_{\eta \in \{ l,m,n \} } \frac{1}{Z_\eta  }\left(|G_\eta |^2 + p^2_\eta \left(v_\eta , T_\eta , Z_\eta \right) - {q}^2_\eta \left(\widehat{v}_\eta , \widehat{T}_\eta , Z_\eta \right)\right)\Big|_{\xi = 0}\\
&= \sum_{\eta \in \{ l,m,n \} } \left(\frac{1}{Z_\eta  }|G_\eta |^2  + \widehat{T}_\eta \widehat{v}_\eta \right)\Big|_{\xi = 0}, \\
\left(\mathbf{v}^T \mathbf{G} + \mathbf{T}^T \widetilde{\mathbf{G}} - \mathbf{v}^T\mathbf{T}\right)&\Big|_{\xi = 1} = \sum_{\eta \in \{ l,m,n \} } \left(v_\eta  G_\eta   + \frac{1}{Z_\eta  }T_\eta   G_\eta - v_\eta  T_\eta \right)\Big|_{\xi = 1} \\
&= \sum_{\eta \in \{ l,m,n \} } \frac{1}{Z_\eta  }\left(|G_\eta |^2 + q^2_\eta \left(v_\eta , T_\eta , Z_\eta \right) - {p}^2_\eta \left(\widehat{v}_\eta , \widehat{T}_\eta , Z_\eta \right)\right)\Big|_{\xi = 1}\\
&= \sum_{\eta \in \{ l,m,n \} } \left(\frac{1}{Z_\eta  }|G_\eta |^2  - \widehat{T}_\eta \widehat{v}_\eta \right)\Big|_{\xi = 1} .
\end{split}
\end{equation}
%
We introduce the SAT vector that matches the eigen--structure of the elastic wave equation 
 {
 \begin{align}
\mathbf{SAT}_{0} = 
         \begin{pmatrix}
            {G}_x\\
            {G}_y\\
            {G}_z\\
            -{n_x}\widetilde{{G}}_x, \\
            -{n_y}\widetilde{{G}}_y \\
            -{n_z}\widetilde{{G}}_z, \\
            -\left({n_y}\widetilde{{G}}_x + {n_x}\widetilde{{G}}_y\right)\\
            -\left({n_z}\widetilde{{G}}_x + {n_x}\widetilde{{G}}_z\right)\\ -\left({n_z}\widetilde{{G}}_y + {n_y}\widetilde{{G}}_z\right)\\
         \end{pmatrix},
         \quad
\mathbf{SAT}_{n_\xi} = 
         \begin{pmatrix}
            {G}_x\\
            {G}_y\\
            {G}_z\\
            {n_x}\widetilde{{G}}_x, \\
            {n_y}\widetilde{{G}}_y \\
            {n_z}\widetilde{{G}}_z, \\
            \left({n_y}\widetilde{{G}}_x + {n_x}\widetilde{{G}}_y\right)\\
            \left({n_z}\widetilde{{G}}_x + {n_x}\widetilde{{G}}_z\right)\\ \left({n_z}\widetilde{{G}}_y + {n_y}\widetilde{{G}}_z\right)\\
         \end{pmatrix}.
\end{align}
      }
  Here, $\mathbf{n} = (n_x, n_y, n_z)^T$ is the unit normal vector on the boundary defined in \eqref{eq:normal_vector}.
  Note that
  \begin{equation}\label{eq:scalar_product_flux}
  \mathbf{Q}^T\mathbf{SAT}_{0} = \mathbf{v}^T \mathbf{G} - \mathbf{T}^T \widetilde{\mathbf{G}} , \quad \mathbf{Q}^T\mathbf{SAT}_{ n_\xi} = \mathbf{v}^T \mathbf{G} + \mathbf{T}^T \widetilde{\mathbf{G}}.
  \end{equation}
Finally, the SAT terms for the general boundary conditions are defined as follows
\begin{align}\label{eq:SAT_General}
\mathbf{SAT}_{\xi, i} = -\bm{H}_{\xi}^{-1}\mathbf{e}_{\xi,i}\bm{J}\sqrt{\bm{\xi}_x^2 + \bm{\xi}_y^2 + \bm{\xi}_z^2}\mathbf{SAT}_i.
\end{align}
Introduce the fluctuation term
\begin{align}\label{eq:fluctuation_term}
   {F}_{luc}\left({\bm{G}},\mathbf{Z}\right) \coloneqq  -\sum_{\xi \in  \{ q, r, s \} }\sum_{i \in \{ 0, n_\xi \} } \mathbb{I}_{\xi_i}\left(\sum_{\eta = l,m,n} \frac{1}{Z_\eta  }|G_\eta |^2\right) \le 0, 
\end{align}
and  discrete boundary surface terms $\mathbb{I} \left(\widehat{\mathbf{v}}^T \widehat{\mathbf{T}}\right)$. Note that 
    \begin{align}\label{eq:disc_boundary_term_hat}
    \mathbb{I} \left(\widehat{\mathbf{v}}^T \widehat{\mathbf{T}}\right)=  \sum_{\xi \in \{q,r,s\} }S_{\xi}\left(\bm{J}\sqrt{\bm{\xi}_x^2+\bm{\xi}_y^2+\bm{\xi}_z^2 },  \widehat{\mathbf{v}}^T\widehat{\mathbf{T}}\right),
\end{align}
where the surface cubature $S_{\xi}$ is defined in \eqref{eq:bt_q}--\eqref{eq:bt_s}. Note also that by \eqref{eq:identity_3_bc} the boundary term is never positive, $\mathbb{I} \left(\widehat{\mathbf{v}}^T \widehat{\mathbf{T}}\right) \le 0$ for all $|\gamma_\eta|\le 1$, and by \eqref{eq:fluctuation_term} the  fluctuation term is never positive, ${F}_{luc}\left({\bm{G}},\mathbf{Z}\right) \le 0.$
We state the second main result
    \begin{theorem}\label{theo:gneral_bc_procedure}
    Consider the semi-discrete approximation \eqref{eq:gen_hyp_transformed_discrete_SAT} of the elastic wave equation with the SAT-terms  $\mathbf{SAT}_{\xi, i}$ defined in \eqref{eq:SAT_General}.
    We have
    \begin{align*}
        \frac{d}{dt} \|\mathbf{Q}\left(\cdot, \cdot, \cdot, t\right)\|_{HP}^2 = {F}_{luc}\left({\bm{G}},\mathbf{Z}\right)+ \mathbb{I} \left(\widehat{\mathbf{v}}^T \widehat{\mathbf{T}}\right)   \le 0.
    \end{align*}
    \end{theorem}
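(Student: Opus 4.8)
The plan is to mirror the proof of Theorem~\ref{theo:free_surface_bc_procedure}, replacing the free-surface penalty by the general SAT \eqref{eq:SAT_General} and carefully tracking the extra terms that survive. First I would differentiate the discrete energy, insert the semi-discrete system \eqref{eq:gen_hyp_transformed_discrete_SAT}, and split off the penalty contribution,
\begin{align*}
\frac{d}{dt}\|\mathbf{Q}\|_{HP}^2 = \l\bm{Q},\grad_{D_{-}}\bullet\mathbf{F}(\mathbf{Q}) + \sum_{\xi}\mathbf{B}_\xi(\grad_{D_{+}}\mathbf{Q})\r_H + \sum_{\xi,i}\l\bm{Q},\mathbf{SAT}_{\xi,i}\r_H.
\end{align*}
By Theorem~\ref{theo_sbp_sem_discrete} the first inner product collapses to the physical surface term $\mathbb{I}(\bm{v}^T\bm{T})$, so the whole problem reduces to evaluating the penalty contribution and showing that it cancels the physical flux while leaving only manifestly non-positive remainders.

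Second, I would evaluate $\sum_{\xi,i}\l\bm{Q},\mathbf{SAT}_{\xi,i}\r_H$. The essential observation is that in \eqref{eq:SAT_General} the factor $\bm{H}_\xi^{-1}$ cancels the one-dimensional weight in the $\xi$-direction contained in the full norm matrix $\bm{H}$, while the boundary projector $\mathbf{e}_{\xi,i}$ restricts the remaining tensor-product sum to the single face $\xi_i$. Consequently each penalty inner product collapses to a surface cubature, $\l\bm{Q},\mathbf{SAT}_{\xi,i}\r_H = -\mathbb{I}_{\xi_i}(\bm{Q}^T\mathbf{SAT}_i)$, and the scalar-product identities \eqref{eq:scalar_product_flux} turn $\bm{Q}^T\mathbf{SAT}_i$ into $\mathbf{v}^T\mathbf{G}-\mathbf{T}^T\widetilde{\mathbf{G}}$ at the $\xi=0$ faces and $\mathbf{v}^T\mathbf{G}+\mathbf{T}^T\widetilde{\mathbf{G}}$ at the $\xi=1$ faces.

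Third, I would combine these face contributions with the matching pieces of $\mathbb{I}(\bm{v}^T\bm{T})$. Splitting $\mathbb{I}(\bm{v}^T\bm{T})$ into its two boundary faces, whose surface cubatures carry opposite signs at $\xi_0$ and $\xi_{n_\xi}$, the $\xi=0$ face yields the integrand $\mathbf{v}^T\mathbf{G}-\mathbf{T}^T\widetilde{\mathbf{G}}+\mathbf{v}^T\mathbf{T}$ and the $\xi=1$ face yields $\mathbf{v}^T\mathbf{G}+\mathbf{T}^T\widetilde{\mathbf{G}}-\mathbf{v}^T\mathbf{T}$. These are precisely the left-hand sides of the pointwise identities \eqref{eq:identity_pen}, so I would invoke \eqref{eq:identity_pen} face by face to rewrite each integrand as $\sum_{\eta}Z_\eta^{-1}|G_\eta|^2 \pm \widehat{T}_\eta\widehat{v}_\eta$. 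Collecting the squared-penalty pieces over all faces reassembles the fluctuation term $F_{luc}(\bm{G},\mathbf{Z})$ of \eqref{eq:fluctuation_term}, and collecting the $\widehat{T}_\eta\widehat{v}_\eta$ pieces with their face signs reassembles the hat boundary term $\mathbb{I}(\widehat{\mathbf{v}}^T\widehat{\mathbf{T}})$ of \eqref{eq:disc_boundary_term_hat}, giving the claimed identity.

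Finally, for the inequality I would note that $F_{luc}\le 0$ holds termwise because each impedance satisfies $Z_\eta>0$, while the non-positivity of $\mathbb{I}(\widehat{\mathbf{v}}^T\widehat{\mathbf{T}})$ for all $|\gamma_\eta|\le 1$ has already been recorded just before the statement and follows from the boundary data satisfying \eqref{eq:BC_General2}, the discrete analogue of the sign condition used in Lemma~\ref{Lem:BTs}. I expect the main obstacle to be the sign bookkeeping in the third step: the physical flux $\bm{v}^T\bm{T}$ produced by the SBP surface term and the flux hidden inside the penalty must combine with exactly the right signs at each of the two faces so that $\bm{v}^T\bm{T}$ is eliminated and the residual matches the left-hand side of \eqref{eq:identity_pen}. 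Verifying that $\bm{H}\bm{H}_\xi^{-1}\mathbf{e}_{\xi,i}$ truly collapses to the surface weight $\bm{J}\sqrt{\bm{\xi}_x^2+\bm{\xi}_y^2+\bm{\xi}_z^2}$ underlying $\mathbb{I}_{\xi_i}$, rather than leaving spurious interior factors, is the other place where care is required.
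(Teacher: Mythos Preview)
Your proposal is correct and follows essentially the same route as the paper's proof: differentiate the energy, apply Theorem~\ref{theo_sbp_sem_discrete} to reduce the interior terms to $\mathbb{I}(\bm{v}^T\bm{T})$, evaluate each $\l\bm{Q},\mathbf{SAT}_{\xi,i}\r_H$ via \eqref{eq:scalar_product_flux} as $-\mathbb{I}_{\xi_i}(\mathbf{v}^T\mathbf{G}\mp\mathbf{T}^T\widetilde{\mathbf{G}})$, combine with the face contributions of $\mathbb{I}(\bm{v}^T\bm{T})$, and then invoke \eqref{eq:identity_pen} to obtain $F_{luc}+\mathbb{I}(\widehat{\mathbf{v}}^T\widehat{\mathbf{T}})$. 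Your explicit remark that $\bm{H}\bm{H}_\xi^{-1}\mathbf{e}_{\xi,i}$ collapses to the face cubature is a point the paper leaves implicit, but otherwise the argument and its ordering coincide.
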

    \begin{proof}
    Consider
    {\small
    \begin{align*}
        \frac{d}{dt} \|\mathbf{Q}\left(\cdot, \cdot, \cdot, t\right)\|_{HP}^2 = \l \bm{Q} , P^{-1} \pf{}{t}  \bm{Q} \r_{H}  &= \l \bm{Q}, \grad_{D_{-}}\bullet {\mathbf{F} \left({\mathbf{Q}} \right)} + \sum_{\xi= q, r, s}\mathbf{B}_\xi\left(\grad_{D_{+}}{\mathbf{Q}}\right) \r_{H} \\
        &+  \l \bm{Q},\sum_{\substack{\xi \in \{q,r,s\} \\ i \in \{0, 1 \} } }\mathbf{SAT}_{\xi, i}\left({\mathbf{Q}}\right)\r_{H}.
    \end{align*}
    }
    By Theorem \ref{theo_sbp_sem_discrete} we have
    \begin{align}
        \frac{d}{dt} \|\mathbf{Q}\left(\cdot, \cdot, \cdot, t\right)\|_{HP}^2  = \mathbb{I} \left({\mathbf{v}}^T {\mathbf{T}}\right)+   \sum_{\substack{\xi \in \{q,r,s\} \\ i \in \{0, n_{\xi} \} } }\l \bm{Q},\mathbf{SAT}_{\xi, i}\left({\mathbf{Q}}\right)\r_{H},
    \end{align}
    with
    $$
    \l \bm{Q}, \mathbf{SAT}_{\xi, 0} \r_{H} = -\mathbb{I}_{\xi_0}\left(\mathbf{v}^T \mathbf{G} - \mathbf{T}^T \widetilde{\mathbf{G}}\right), \quad \l \bm{Q}, \mathbf{SAT}_{\xi, n_{\xi}} \r_{H} = -\mathbb{I}_{\xi_{n_\xi}}\left(\mathbf{v}^T \mathbf{G} + \mathbf{T}^T \widetilde{\mathbf{G}}\right),
    $$
    and
    $$
    \mathbb{I} \left({\mathbf{v}}^T {\mathbf{T}}\right) = \sum_{\substack{\xi \in \{q,r,s\} \\ i \in \{0, 1 \} } } \left(-1\right)^{\xi_i + 1}\mathbb{I}_{\xi_i}\left({\mathbf{v}}^T{\mathbf{T}}\right).
    $$
    By using the identity \eqref{eq:identity_pen} we have
    \begin{equation}
    \begin{split}
        &\frac{d}{dt} \|\mathbf{Q}\left(\cdot, \cdot, \cdot, t\right)\|_{HP}^2  = \\
        &-\sum_{\xi \in  \{ q,r,s \} }\left(\mathbb{I}_{\xi_0}\left(\mathbf{v}^T \mathbf{G} - \mathbf{T}^T \widetilde{\mathbf{G}} + \mathbf{v}^T\mathbf{T}\right) + \mathbb{I}_{\xi_{n_\xi}}\left(\mathbf{v}^T \mathbf{G} + \mathbf{T}^T \widetilde{\mathbf{G}} - \mathbf{v}^T\mathbf{T}\right)\right)\\
        &= -\sum_{\xi \in \{ q,r,s \} }\left(\mathbb{I}_{\xi_0}\left(\sum_{\eta \in \{ l,m,n \} } \frac{1}{Z_\eta  }|G_\eta |^2 + \widehat{\mathbf{v}}^T\widehat{\mathbf{T}} \right) + \mathbb{I}_{\xi_{n_\xi}}\left(\sum_{\eta \in \{ l,m,n \} } \frac{1}{Z_\eta  }|G_\eta |^2 -\widehat{\mathbf{v}}^T\widehat{\mathbf{T}} \right)\right)\\
        & =  {F}_{luc}\left({\bm{G}},\mathbf{Z}\right) + \mathbb{I} \left(\widehat{\mathbf{v}}^T \widehat{\mathbf{T}}\right)   \le 0.
        \end{split}
    \end{equation}
    The proof is complete. 
    \end{proof}
The fluctuation term ${F}_{luc}\left({\bm{G}},\mathbf{Z}\right) \le 0 $ adds a little numerical dissipation on the boundary. However, in the limit of mesh refinement the fluctuation term vanishes, that is ${F}_{luc}\left({\bm{G}},\mathbf{Z}\right) \to 0^+$ as $h \to 0^+$, and we have  $\mathbb{I} \left(\widehat{\mathbf{v}}^T \widehat{\mathbf{T}}\right) \to BTs\left(\bm{v}, \bm{T}\right)$. Thus the discrete main results, Theorems \eqref{theo:free_surface_bc_procedure} and \eqref{theo:gneral_bc_procedure}, are completely analogous to the continuous counterpart, Theorem \ref{theo:energy_estimate_cont}.

\section{Numerical error analysis}
In this section we will analyse the numerical errors for the semi-discrete approximation \eqref{eq:gen_hyp_transformed_discrete_SAT}.   
We will derive a priori error estimate and prove convergence of the error as $h \to 0^+$. Next, we will discuss numerical dispersion errors that are peculiar to wave propagation problems, and  which are most prominent at high frequencies. 
\subsection{A priori error estimate}
Let $\boldsymbol{\mathcal{Q}}$ denote the exact solution of the IBVP, and $\boldsymbol{\mathcal{Q}}(q_i, r_j, s_k, t)$ denote the restriction of the exact solution on the grid $(q_i, r_j, s_k)$ at time $t$. 
We introduce the numerical error on the grid
\begin{align}
  \boldsymbol{\mathcal{E}}_{ijk}(t) \coloneqq \mathbf{Q}_{ijk}(t)-\boldsymbol{\mathcal{Q}}(q_i, r_j, s_k, t). 
\end{align}
The error $\boldsymbol{\mathcal{E}} \in \R^{9 n_q n_r n_s}$ satisfies the error equation
\begin{align}\label{eq:gen_hyp_transformed_discrete_SAT_error}
\widetilde{\mathbf{P}}^{-1} \frac{d }{d t} \boldsymbol{\mathcal{E}} = \grad_{D_{-}}\bullet {\mathbf{F} \left(\boldsymbol{\mathcal{E}} \right)} + \sum_{\xi \in \{ q, r, s \} }\mathbf{B}_\xi\left(\grad_{D_{+}}\boldsymbol{\mathcal{E}}\right) 
+\sum_{\substack{\xi \in \{q,r,s\} \\ i \in \{0, n_\xi \} } }\mathbf{SAT}_{\xi, i}\left(\boldsymbol{\mathcal{E}}\right) + \mathbb{T},
\end{align}
where $\mathbb{T}\in \R^{9 n_q n_r n_s}$ is the truncation error of the SBP FD operator. Note that the truncation error $\mathbb{T}$ is a 3D vector field.  However, it has a structure which is similar in all spatial directions. 
In particular, for grid points $\xi_j = j h_{\xi}$ in the spatial direction $\xi \in \{q, r, s\}$ the truncation error is of the form
\begin{equation}\label{eq:truncation_error}
\begin{split}
& \mathbb{T}_{\xi,j}  = \left \{
\begin{array}{rl}
 h_{\xi}^{\gamma} \beta_j\frac{\partial^{\gamma+1} \mathcal{Q}}{\partial \xi^{\gamma +1}}\Big|_{\xi_j},  & \text{if boundary}  ,\\
 h_{\xi}^{\nu} \beta_j\frac{\partial^{\nu+1} \mathcal{Q}}{\partial \xi^{\nu+1}}\Big|_{\xi_j},
 & \text{if interior}.
\end{array} \right\} ,
\end{split}
\end{equation}
where $\mathbb{T}_{ijk} = \mathbb{T}_{q,i} + \mathbb{T}_{r,j} + \mathbb{T}_{s,k}$.
Here, $\beta_j$, are mesh independent constants,  $h_{\xi}>0$ is the grid spacing,  $\gamma \in \{1, 2, \cdots \}$ is the order of accuracy of the SBP FD stencils close to the boundary and $\nu \in \{1, 2, \cdots \}$ is the order of accuracy of the SBP FD stencils in the interior, away from the boundaries.
For traditional SBP operators based on central difference stencils the interior accuracy is always even, and  we have $(\gamma, \nu) = (p, 2p)$, for $p \in \N $. For upwind SBP operators the interior order of accuracy can be odd or even. As discussed in section \ref{sec:spatial_approximation}, see also \cite{Mattsson2017}, upwind SBP operators with even-order $\left(2p\right)$-th accuracy in the interior are closed with $p$-th order accurate stencils close to boundaries, and we also have $(\gamma, \nu) = (p, 2p)$. Upwind SBP operators with odd-order $\left(2p+1\right)$-th accuracy in the interior are closed with $p$-th order accurate stencils close to boundaries, giving $(\gamma, \nu) = (p, 2p+1)$. 
The traditional SBP operators and upwind operators can yield $\left(p+1\right)$-th global order of accuracy, for smooth solutions.

Note that at the initial time the numerical error is zero $\boldsymbol{\mathcal{E}} (0) =0$. Application of Theorem \ref{theo:free_surface_bc_procedure} or Theorem \ref{theo:gneral_bc_procedure} to the error equation \eqref{eq:gen_hyp_transformed_discrete_SAT_error} gives the error estimate
    \begin{theorem}\label{theo:error_estimate}
    Consider the semi-discrete error equation \eqref{eq:gen_hyp_transformed_discrete_SAT_error}, with the numerical error $\boldsymbol{\mathcal{E}}(t) $  and the truncation error $\boldsymbol{\mathbb{T}}$.
    If the SAT-terms are chosen such that Theorem \ref{theo:free_surface_bc_procedure} or Theorem \ref{theo:gneral_bc_procedure} holds, then we have  
    \begin{align*}
         \|\boldsymbol{\mathcal{E}}\left(t\right)\|_{HP} 
        \le
        \int_0^t\|\boldsymbol{\mathbb{T}}\left(\tau\right)\|_{HP^{-1}} d\tau.
    \end{align*}
\end{theorem}
 Theorem \ref{theo:error_estimate} proves that the numerical error $\boldsymbol{\mathcal{E}}$ is bounded by the truncation error $\boldsymbol{\mathbb{T}}$, and will converge to zero if $\boldsymbol{\mathbb{T}}$ is square integrable.  If $\boldsymbol{\mathcal{Q}}$ is sufficiently smooth such that the highest derivatives in \eqref{eq:truncation_error} are continuous then the numerical error will converge to zero optimally, $\boldsymbol{\mathcal{E}} = O(h^{\gamma+1})$.
 
  Note also that Theorem \ref{theo:error_estimate}  hold for traditional and upwind SBP operators. Thus traditional   and even-order upwind SBP operators with $(\gamma, \nu) = (p, 2p)$ will have the same asymptotic error $\boldsymbol{\mathcal{E}} = O(h^{p+1})$.
  However, as we will see from numerical experiments performed in the next section, on a marginally resolved mesh the numerical errors could completely differ, where for example the upwind SBP operators yield optimal numerical errors than the traditional SBP operator. This can be explained by analysing the numerical dispersion properties of the operators.

\subsection{Numerical dispersion relation analysis}\label{sec:dsipersion_relation}
In order to understand the numerical dispersion properties of the SBP operators we consider the 1D shear plane wave propagating along the $x$-axis
$$
Q(x,t) 
=
\begin{pmatrix}
v_0\\
\sigma_0
\end{pmatrix}
\exp \left(-i (\omega t - kx\right)), \qquad i = \sqrt{-1},
$$
with the shear wave speed $c_s = \sqrt{{\mu}/{\rho}} >0$, where $\mu > 0$ is the shear modulus and $\rho >0$ is the density of the medium.
Here, $Q_0 = \left(v_0, \sigma_0\right)^T$ is the constant polarisation vector, $\omega \in \mathbb{R}$ is the temporal frequency and $k\in \mathbb{R}$ is spatial wave number.
The plane wave $Q(x,t)$ solves the 1D elastic wave equation 
\begin{align}\label{eq:1D_elastic_wave_equation}
    \rho \frac{\partial v}{\partial t} = \frac{\partial \sigma}{\partial x}, 
    \qquad
    \frac{1}{\mu} \frac{\partial \sigma}{\partial t} = \frac{\partial v}{\partial x},
\end{align}
 subject to the solvability condition called the dispersion relation
\begin{align}\label{eq:1D_elastic_wave_equation_dispersion}
     \omega  = c_s k. 
\end{align}
We introduce the phase velocity $V_p$ and the group velocity $V_g$ defined by
$$
V_p:=\frac{\omega}{k} = c_s>0, \qquad V_g:=\frac{\partial \omega}{\partial k} = c_s>0.
$$
For the simple 1D model \eqref{eq:1D_elastic_wave_equation} the dispersion relation \eqref{eq:1D_elastic_wave_equation_dispersion} is linear, the phase and group velocities are constant $c_s>0$, and  are independent of the frequency and the wave number. 

Now let us consider the semi-discrete counterpart
\begin{align}\label{eq:1D_wave_disc}
     \rho \frac{d \mathbf{v}}{dt} = D_{+} \boldsymbol{\sigma}, \qquad \frac{1}{\mu} \frac{d \boldsymbol{\sigma}}{dt} =D_{-} \mathbf{v},
\end{align}
where $D_+$ and $D_-$ are the upwind SBP operators.
We consider the interior stencils only, thus
$$
\left(D_{+} \mathbf{v}\right)_j = \frac{1}{h}\sum_{j=-r}^{q} \alpha_j v_j, \qquad
\left(D_{-} \mathbf{v}\right)_j = \frac{1}{h}\sum_{j=-q}^{r} \beta_j v_j, \qquad
    0\le r < q,
$$
with the consistency requirements
$$
\sum_{j=-r}^{q} \alpha_j = \sum_{j=-r}^{q} \beta_j = 0, \qquad \sum_{j=-r}^{q} j\alpha_j = \sum_{j=-q}^{r} j\beta_j = 1.
$$
Here, $h>0$ is the uniform grid spacing, $\alpha_j, \beta_j$ are the non-dimensional constant coefficients defining the upwind finite difference stencils. Note that $\beta_j = -\alpha_{-j}$ for $j = -q, -(q-1), \cdots r$.
The Upwind  SBP finite difference coefficients $\alpha_j$ for the interior stencils with  order of accuracy $2, 3, 4, 5, 6, 7, 8, 9$ are given in Table \ref{tab:upwind}.
  \begin{table}[h]
        \centering
       \begin{tabular}{||ccc||c|c|c|c|c|c|c|c|c|c||}
       \hline
order & $r$ & $q$ &  $\alpha_{-4}$& $\alpha_{-3}$ & $\alpha_{-2}$ & $\alpha_{-1}$ & $\alpha_{0}$ &    $\alpha_{1}$ & $\alpha_{2}$ & $\alpha_{3}$ & $\alpha_{4}$ & $\alpha_{5}$ \\ 
\hline
2   & 0         & 2  &  - & - & - & - & $-3/2$ & $2$ & $-1/2$ & - & - & -                                   \\ \hline
3   & 1         & 2  &  - & - & - & $-1/3$ & $-1/2$ &   $1$ &  $-1/6$ & - & - & -                           \\ \hline
4   & 1         & 3  &  - & - & - & $-1/4$& $-5/6$ &    $3/2$& $-1/2$ & $1/12$ & - & -                      \\ \hline
5   & 2         & 3  &  - & - & $1/20$ &  $-1/2$& $-1/3$& $1$& $-1/4$& $1/30$  & - & -                       \\ \hline
6   & 2         & 4  &  - & - & $1/30$ & $-2/5$ & $ -7/12$ & $4/3$ & $-1/2$ & $2/15$ & $-1/60$  & -             \\ \hline
7   & 3         & 4  &  - & $-1/105$ &  $1/10$ & $-3/5$ & $-1/4$ & $1$ & $-3/10$ &    $1/15$ & $-1/140$    & -  \\ \hline
8   & 3         & 5  &  - & $-1/168$ & $1/14$ & $-1/2$ & $-9/20$ & $5/4$ & $-1/2$ & $1/6$ & $1/28$ & $1/280$          \\ \hline
9   & 4         & 5  & $1/504$& $-1/42$ & $1/7$ & $-2/3$ & $-1/5$ & $1$ & $-1/3$ & $2.0/21$&  $-1/56$ & $1/630$                    \\ \hline
\end{tabular}
\caption{Upwind forward SBP finite difference coefficients for the interior stencils with interior order of accuracy $2, 3, 4, 5, 6, 7, 8, 9.$ }
\label{tab:upwind}
\end{table}

For the traditional SBP finite difference operator we have
$$
\left(D \mathbf{v}\right)_j=\left(D_{+} \mathbf{v}\right)_j = \left(D_{-} \mathbf{v}\right)_j= \frac{1}{h}\sum_{j=-q}^{q} \gamma_j v_j, \qquad
     q\ge 1,
$$
where $\gamma_j$ are the non-dimensional constant coefficients of the finite difference operator, with $\gamma_{-j} = -\gamma_j$, $\gamma_0 =0$, and satisfying the consistency requirements
$$
\sum_{j=-q}^{q} \gamma_j  = 0, \qquad \sum_{j=1}^{q} 2j\gamma_j = 1.
$$
The traditional  SBP finite difference coefficients $\gamma_j$ for the interior stencils with  order of accuracy $2, 4, 6, 8$ are given in Table \ref{tab:traditional}.
\begin{table}[h]
        \centering
       \begin{tabular}{||cc||c|c|c|c||}
       \hline
order  & $q$ &  $\gamma_{1}$& $\gamma_{2}$ & $\gamma_{3}$ & $\gamma_{4}$      \\ 
\hline
2          & 1  &  $1/2$ & - & - & -                                   \\ \hline
4          & 2  &  $2/3$ & $-1/12$ & - & -                     \\ \hline
6           & 3  &  $3/4$ & $-3/20$ & $1/60$ & -              \\ \hline
8          & 4  &  4/5 & $-1/5$ & $4/105$ & $-1/280$           \\ \hline
\end{tabular}
\caption{Traditional (central) SBP finite difference coefficients for the interior stencils with interior order of accuracy $2,4, 6, 8.$ }
\label{tab:traditional}
\end{table}

Inserting $Q(x,t)$ in \eqref{eq:1D_wave_disc} we have the numerical dispersion relation for the upwind finite difference SBP operator
\begin{align}\label{eq:dispersion_relation_upwind}
      \widetilde{\omega} = c_s \sqrt{\left(\sum_{j=-r}^{q} \alpha_j \cos(j \widetilde{k})\right)^2 +  \left(\sum_{j=-r}^{q} \alpha_j \sin(j \widetilde{k})\right)^2},
\end{align}
 where we have taken the positive square root, and 
 $
 \widetilde{\omega} = h\omega, \quad \widetilde{k} = hk,
 $
 are the numerical frequency and numerical wave number, respectively. 
 
 Similarly, for the traditional SBP operator we have the numerical dispersion relation
 \begin{align}\label{eq:dispersion_relation_traditional}
      \widetilde{\omega} = c_s   \Big|\sum_{j=1}^{q} 2\gamma_j \sin(j \widetilde{k})\Big|.
\end{align}

For sufficiently small wave number $0\le \widetilde{k} \ll 1$,  the upwind SBP dispersion relation \eqref{eq:dispersion_relation_upwind} and the traditional SBP dispersion relation \eqref{eq:dispersion_relation_traditional} will sufficiently approximate the continuous linear dispersion relation
$\widetilde{\omega} \approx c_s\widetilde{k}$. 
However, in general the numerical dispersion relations \eqref{eq:dispersion_relation_upwind}--\eqref{eq:dispersion_relation_traditional} are nonlinear functions of the wave number $\widetilde{k}$. We consider specifically the $2\pi$-periodic interval $\widetilde{k} \in [-\pi, \pi]$. There are always unresolved numerical modes, which are most prominent at high frequencies,  present in the solution. For the traditional SBP operator, from \eqref{eq:dispersion_relation_traditional}, note in particular that $\widetilde{\omega}(0)=\widetilde{\omega}(\pm \pi) = 0$.  However, for the upwind SBP operators, from \eqref{eq:dispersion_relation_upwind}, we have $\widetilde{\omega}(0)= 0$, $\widetilde{\omega}(\pm \pi) \ne 0$. There are spurious unresolved wave  modes present in the solution. In general, a mode $\widetilde{\omega}(\widetilde{k}_1)$ is spurious if ${\partial \widetilde{\omega}(\widetilde{k})}/{\partial \widetilde{k}}\Big|_{\widetilde{k} = \widetilde{k}_1}<0 $. The negative group velocity  will propagate energy in the opposite (wrong) direction. This also  implies that there are $(\widetilde{k}_0, \widetilde{k}_1)$ with $0\le\widetilde{k}_0 < \widetilde{k}_1$ such that $\omega(\widetilde{k_0}) = \omega(\widetilde{k}_1)$. Note that $\omega(\widetilde{k_0})$ would correspond to a resolved wave mode, and $\omega(\widetilde{k_1})$ is an unresolved spurious wave mode which can be poisonous to numerical simulations.  

 For simplicity we set the shear wave speed to $c_s = 1$, and because of symmetry we plot the dispersion relations in the positive sub-interval $\widetilde{k} \in [0, \pi]$. The numerical dispersion relations are displayed in Figure \ref{fig:dispersion_all} for upwind operators with interior order of accuracy $2, 3, 4, 5, 6, 7, 8, 9$ and for traditional centered difference SBP operators with interior order of accuracy $2,  4,  6,  8$.
 
 We summarise the significant observations below:
\begin{enumerate}
    \item  In general upwind SBP operators have better dispersion properties than traditional SBP operators based on centered finite difference stencils.
 \item  The properties of the numerical dispersion relation  may improve with increasing accuracy.
 \item  The numerical dispersion relation reaches near optimal properties for 6th order accurate upwind SBP operator.
 \item  Beyond the 6th order accurate upwind SBP operator, higher order accuracy does not improve the dispersion properties of the upwind SBP operators.
 \item  even-order upwind SBP operators of order $2, 4, 6$ do support not spurious unresolved wave mode, since ${\partial\widetilde{\omega}(\widetilde{k})}/{\partial \widetilde{k}}\gtrsim 0$ for all $\widetilde{k}$.   All odd-order upwind SBP operator including upwind SBP operator of order  $8$ support spurious unresolved high frequency wave modes, ${\partial\widetilde{\omega}(\widetilde{k})}/{\partial \widetilde{k}}<0$ for some $\widetilde{k}$.
 \item For all traditional SBP operators almost half of the spectrum is dominated by spurious unresolved wave modes with ${\partial\widetilde{\omega}(\widetilde{k})}/{\partial \widetilde{k}}<0$. 
\end{enumerate}
\begin{figure}[H]
    \centering
    \includegraphics[width=0.85\textwidth]{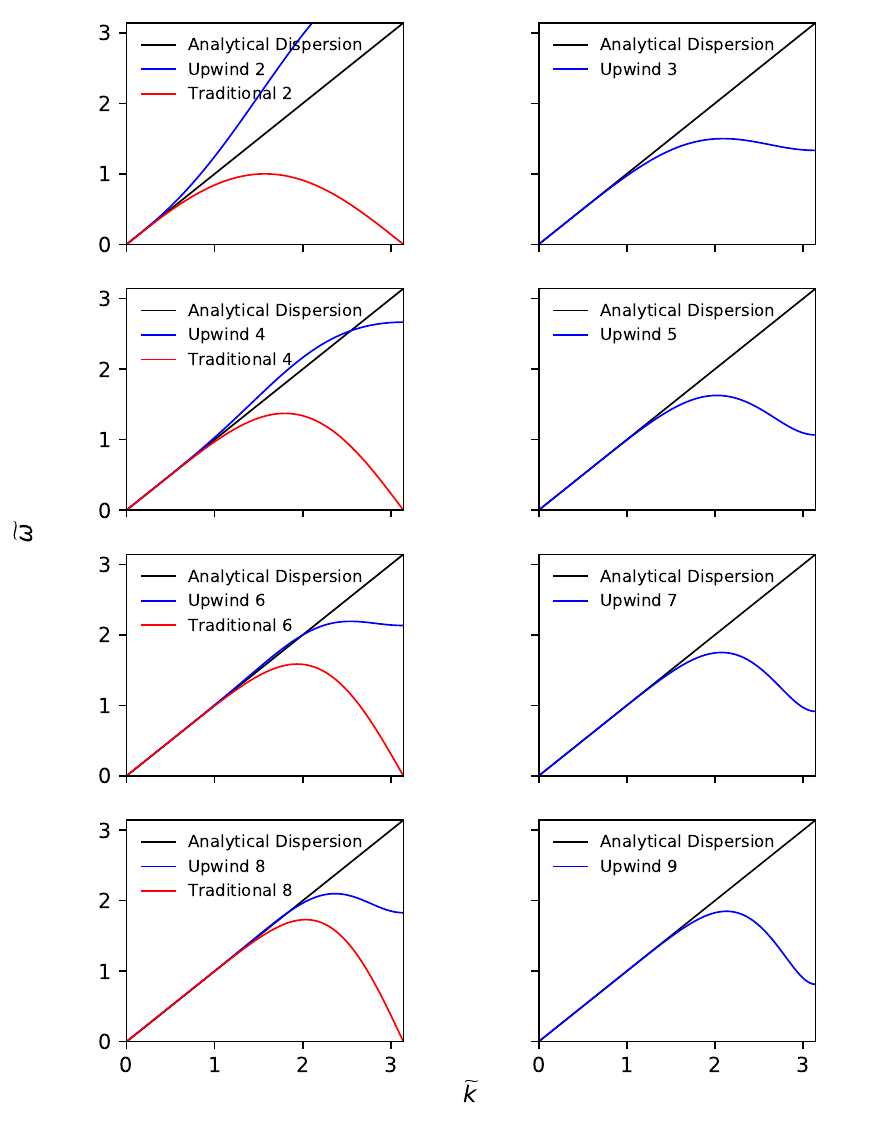}
     \vspace{-1em}
    \caption{Numerical dispersion relations for traditional operators of order $2,4,6,8$ and upwind operators of order $2,3,4,5,6,7,8,9$ compared with the analytical (exact) dispersion relation of the continuous 1D operator. }
    \label{fig:dispersion_all}
\end{figure}
%
%

\section{Numerical simulations in 3D}
In this section, we present numerical simulations in 3D. The experiments are designed to evaluate accuracy and demonstrate the efficiency of the upwind SBP operators in resolving scattered high frequency waves from complex geometries on marginally resolved meshes. We will also demonstrate parallel efficiency and perfect scaling of our parallel implementation and numerical simulation on Gadi\footnote{Gadi contains a total of 155,000 CPU cores, 567 Terabytes of memory and 640 GPUs.

    3,074 nodes each containing two 24-core Intel Xeon Scalable ‘Cascade Lake’ processors and 192 Gigabytes of memory.
        including 50 nodes each offering 1.5 Terabytes of Intel Optane DC Persistent memory.
    160 nodes each containing four Nvidia V100 GPUs and two 24-core Intel Xeon Scalable 'Cascade Lake' processors.
    Linking the storage systems and Gadi is Mellanox Technologies' latest generation HDR InfiniBand technology in a Dragonfly+ topology, capable of transferring data at up to 200 Gb/s.
    The storage sub-systems are NetApp enterprise class storage arrays, linked together in a DDN Lustre parallel file system.
    Altair’s PBSPro software optimises job scheduling and workload management.
    Gadi uses the latest version of the CentOS 8 operating system.
}, Australia's newest supercomputer. We will use WaveQLab3D's default implementation \cite{DuruandDunham2016} of traditional SBP operator of interior order $6$ and upwind operators \cite{Mattsson2017} of interior order $2,3,4,5,6,7,8,9$. Then we will compare the numerical accuracy of traditional SBP operator of interior order $6$ and upwind operators of interior order $6$. These SBP operators are closed with 3rd order accurate stencils have 4th order global accuracy. The solutions are integrated in time using $4$-th order acurate low-storage Runge-Kutta time stepping scheme \cite{CarpenterKennedy1994}. 

To verify accuracy, we compute numerical solution of the 3D benchmark problem LOH1 \cite{Kristekova_etal2009, Kristekova_etal2006}, which has a semi-analytic-solution and compare results. We then show the potential of the upwind scheme by simulating a large scale 3D wave propagation problem with complex geometry at different grid resolutions. For the $6$-th order accurate operators, the upwind scheme provides comparable results on a marginally resolved grid to the traditional SBP operator on a finer grid, thereby improving computational efficiency for 3D numerical seismic wave simulations. 

In all simulations, we have used the PML \cite{DuruKozdonKreiss2016} to prevent artificial numerical reflections, from the computational boundaries, from contaminating the numerical simulations. A stable implementation of the PML for the 3D  elastic wave equation  IBVP using the upwind SBP operators is a non-trivial task. The details of the numerical treatment of the PML using upwind SBP operators will be reported in a forthcoming paper.



\subsection{Layer over  homogeneous half-space (LOH1) 3D benchmark problem}
To verify and assess the numerical accuracy of our upwind method, we choose the 3D seismological benchmark problem,  Layer Over Homogeneous Half-space (LOH1) \cite{Seismowine, Kristekova_etal2009, Kristekova_etal2006} benchmark problem, a Seismic wave Propagation and Imaging in Complex media (SPICE) validation code.
The LOH1 benchmark has a planar free surface and an internal interface between a thin low velocity (soft) upper-layer and high velocity (hard) lower crust,  see  Figure \ref{fig:loh1_setup}.
The material properties for the soft upper-layer and hard lower-half-space are 
\begin{align*}
    \rho = 2600  \1_{ \{ (x_0,y_0,z_0) \ | \ x_0 \leq 1 \} } + 2700 \1_{ \{ (x_0,y_0,z_0) \ | \ x_0 > 1 \} },\\
    c_p = 4000  \1_{ \{ (x_0,y_0,z_0) \ | \ x_0 \leq 1 \} } + 6000 \1_{ \{ (x_0,y_0,z_0) \ | \ x_0 > 1 \} },\\
    c_s = 2000  \1_{ \{ (x_0,y_0,z_0) \ | \ x_0 \leq 1 \} } + 3343 \1_{ \{ (x_0,y_0,z_0) \ | \ x_0 > 1 \} }.
\end{align*}
The wave-speeds have units $ m/s$ and the density $\rho$ has units $kg/m^3$. Note that $\rho$, $c_p$ and $c_s$ are discontinuous in the medium.
The benchmark considers homogeneous initial conditions on the solution $\bm{Q}$ with the double-couple moment tensor point source 
\begin{align}\label{eq:momententor_pointsource}
    \bm{f} (x,y,z,t) = \bm{M} \boldsymbol{\delta}_{(x_p,y_p,z_p)} (x,y,z) g(t), && g(t) \coloneqq \frac{t}{T^2} \exp(-t/T) , \quad T = 0.1 ~\ s,
\end{align}
located  $2$ km at depth $(x_p,y_p,z_p) = (2,0,0)$, where $\boldsymbol{\delta}$ is the 3D Dirac distribution  and $\bm{M} = \left( 0, 0, 0 , 0,0,0,0,0,M_0\right)$ where $M_0 = 10^{18} $ Nm is  the moment magnitude.
Note that the moment tensor source \eqref{eq:momententor_pointsource} is spatially singular. Our numerical implementation approximates the singular source to high order accuracy as in \cite{Petersson_etal2016}.

In the $z$ and $y$ directions, the domain of the problem is unbounded. 
In the positive $x$ direction (in-towards the Earth), the domain is also unbounded with the Earth's surface $x = 0$ having the free surface, traction-free, boundary condition $\left(\bm{T}_x, \bm{T}_y, \bm{T}_z\right) = 0$.

The SPICE code validation project \cite{Seismowine} has suggested to use large enough computational model, namely $\Omega_{L} =[0, 34]\times [-26,32]^2$, so as the seismograms in the receivers do not contain waves, which are due to artificial boundaries of the model.
This would correspond to the computational domain of volume  114376~km$^3$.
To deal with the unbounded domain, we use the PML \cite{DuruKozdonKreiss2016} to absorb outgoing waves and prevent artificial reflections from the bounded computational domain. 
The PML allows us to sufficiently limit the modelling space to be $ \Omega = [0,6] \times [-5,15]^2$ with only a few grid points around the computational boundaries where the PML is active. Please see also  Figure \ref{fig:loh1_setup}.  Our computational domain $\Omega$ is only $2400$~km$^3$ in volume, and amounts to $\% 2.0983$ of the suggested large domain $\Omega_{L}$, thus saving as much as $\% 97.9017$ of  the required computational resources. Although the PML involves auxiliary variables and equations to be stored and evolved, however, the extra computational cost for evolving the auxiliary variables is very insignificant since they are only active inside the thin PML absorbing layer.

To deal with the discontinuity of the medium at $x= 1$~km  we decompose the domain into two sub-blocks, with block1: $0\le x\le 1 $~km and block2: $1\le x\le 6$~km, discretise each sub-block and couple the solutions across the interface weakly using penalties. In the thin low velocity (soft) upper-layer, block1, we use the grid size $h_x = 62.5$ m, in the $x$-direction and set the uniform grid size $h_y = h_z = h= 100$ m in the $y$- and $z$-direction. In the the hard lower-half-space we use the  uniform grid size $h_x=h_y = h_z = h = 100$ m in all directions. At this mesh resolution, the discritisation generates about 25 million degrees of freedom (DoF) for the evolving unknown vector field, and about 39 million DoF needed to store the mesh, material parameters, the Jacobian and metric parameters. 


\newcommand{\ytk}{5.8}
\newcommand{\xtk}{5.8}
\newcommand{\ztk}{3.4}

\newcommand{\xrl}{1.7}
\newcommand{\yrl}{2.1}
\newcommand{\xrr}{3.7}
\newcommand{\yrr}{4.1}

\newcommand{ \lpml}{0.24}

\begin{figure}[H]
    \centering

\begin{tikzpicture}

\coordinate (O) at (0,0,0);
\coordinate (A) at (0,\ztk,0);
\coordinate (B) at (0,\ztk,\xtk);
\coordinate (C) at (0,0,\xtk);
\coordinate (D) at (\ytk,0,0);
\coordinate (E) at (\ytk,\ztk,0);
\coordinate (F) at (\ytk,\ztk,\xtk);
\coordinate (G) at (\ytk,0,\xtk);

\coordinate (A2) at (0,3.3,0);
\coordinate (B2) at (0,3.3,\xtk);
\coordinate (E2) at (\ytk,3.3,0);
\coordinate (F2) at (\ytk,3.3,\xtk);

\coordinate (Ar) at (\yrl,\ztk,\xrl);
\coordinate (Br) at (\yrl,\ztk,\xrr);
\coordinate (Er) at (\yrr,\ztk,\xrl);
\coordinate (Fr) at (\yrr,\ztk,\xrr);

\coordinate (Arb) at (\yrl,2.8,\xrl);
\coordinate (Brb) at (\yrl,2.8,\xrr);
\coordinate (Erb) at (\yrr,2.8,\xrl);
\coordinate (Frb) at (\yrr,2.8,\xrr);

\coordinate (Arl) at (\yrl,3.3,\xrl);
\coordinate (Brl) at (\yrl,3.3,\xrr);
\coordinate (Erl) at (\yrr,3.3,\xrl);
\coordinate (Frl) at (\yrr,3.3,\xrr);

 \draw[blue,fill=blue!10,opacity=0.4] (O) -- (C) -- (G) -- (D) -- cycle;
 \draw[blue,fill=blue!10,opacity=0.4] (O) -- (A) -- (E) -- (D) -- cycle;
 \draw[blue,fill=blue!10,opacity=0.4] (O) -- (A) -- (B) -- (C) -- cycle;
\draw[blue] (D) -- (E) -- (F) -- (G) -- cycle;
 \draw[blue] (C) -- (B) -- (F) -- (G) -- cycle;
 \draw[blue] (A) -- (B) -- (F) -- (E) -- cycle;

\draw[blue,fill=yellow!40,opacity=0.5] (A2) -- (B2) -- (F2) -- (E2) -- cycle;
\draw[blue,fill=yellow!40,opacity=0.7] (Arl) -- (Brl) -- (Frl) -- (Erl) -- cycle;

\draw[blue,fill=red!40,opacity=0.7] (Ar) -- (Br) -- (Fr) -- (Er) -- cycle;
\draw[blue,fill=red!40,opacity=0.7] (Arb) -- (Brb) -- (Frb) -- (Erb) -- cycle;

\draw[blue] (Ar) -- (Arb) ; 
\draw[blue] (Br) -- (Brb) ; 
\draw[blue] (Fr) -- (Frb) ; 
\draw[blue] (Er) -- (Erb) ; 

\node[circle, draw] at (2.6,3.2,3.2) () {S};

\node[draw] at (2.6+0.7348,3.4,3.2- 0.7348) () {6};

\draw[black, -stealth] (2.6,3.4,3.2) -- (2.6,3.4,0) ; 
\draw[black, -stealth] (2.6,3.4,3.2) -- (5.8,3.4,3.2) ; 
\draw[black, -stealth] (2.6,3.4,3.2) -- (5.8,3.4,0) ;


\begin{scope}[shift = {(8,1,0)}, scale = 2.8]
\begin{scope}[shift = {(-\yrl,-2.8,-\xrl)}]
\coordinate (O) at (0,0,0);
\coordinate (A) at (0,\ztk,0);
\coordinate (B) at (0,\ztk,\xtk);
\coordinate (C) at (0,0,\xtk);
\coordinate (D) at (\ytk,0,0);
\coordinate (E) at (\ytk,\ztk,0);
\coordinate (F) at (\ytk,\ztk,\xtk);
\coordinate (G) at (\ytk,0,\xtk);

\coordinate (A2) at (0,3.3,0);
\coordinate (B2) at (0,3.3,\xtk);
\coordinate (E2) at (\ytk,3.3,0);
\coordinate (F2) at (\ytk,3.3,\xtk);

\coordinate (sAr) at (\yrl,\ztk,\xrl);
\coordinate (sBr) at (\yrl,\ztk,\xrr);
\coordinate (sEr) at (\yrr,\ztk,\xrl);
\coordinate (sFr) at (\yrr,\ztk,\xrr);

\coordinate (sArb) at (\yrl,2.8,\xrl);
\coordinate (sBrb) at (\yrl,2.8,\xrr);
\coordinate (sErb) at (\yrr,2.8,\xrl);
\coordinate (sFrb) at (\yrr,2.8,\xrr);

\coordinate (sArl) at (\yrl,3.3,\xrl);
\coordinate (sBrl) at (\yrl,3.3,\xrr);
\coordinate (sErl) at (\yrr,3.3,\xrl);
\coordinate (sFrl) at (\yrr,3.3,\xrr);


 \draw[blue,fill=yellow!40,opacity=0.7] (sArl) -- (sBrl) -- (sFrl) -- (sErl) -- cycle;

\draw[blue,fill=red!40,opacity=0.7] (sAr) -- (sBr) -- (sFr) -- (sEr) -- cycle;
\draw[blue,fill=red!40,opacity=0.7] (sArb) -- (sBrb) -- (sFrb) -- (sErb) -- cycle;

\draw[blue] (sAr) -- (sArb) ; 
\draw[blue] (sBr) -- (sBrb) ; 
\draw[blue] (sFr) -- (sFrb) ; 
\draw[blue] (sEr) -- (sErb) ;


\draw[black, -stealth,opacity=0.7] (2.6,3.4,3.2) -- (2.6,3.4,\xrl) ; 
\draw[black, -stealth,opacity=0.7] (2.6,3.4,3.2) -- (\yrr,3.4,3.2) ; 
\draw[black, -stealth,opacity=0.7] (2.6,3.4,3.2) -- (\yrr,3.4,\xrl) ;

\draw[black, dashed,opacity=0.7] (\yrl,3.4,\xrr - \lpml) -- (\yrr,3.4,\xrr - \lpml) ; 
\draw[black, dashed,opacity=0.7] (\yrl,3.4,\xrl + \lpml) -- (\yrr,3.4,\xrl + \lpml) ;

\draw[black, dashed,opacity=0.7] (\yrl + \lpml,3.4,\xrr) -- (\yrl + \lpml,3.4,\xrl) ; 
\draw[black, dashed,opacity=0.7] (\yrr - \lpml,3.4,\xrl) -- (\yrr - \lpml,3.4,\xrr) ;

\draw[black, dashed,opacity=0.7] (\yrl,2.8 + \lpml,\xrr - \lpml) -- (\yrr,2.8 + \lpml,\xrr - \lpml) ; 
\draw[black, dashed,opacity=0.7] (\yrl,2.8 + \lpml,\xrl + \lpml) -- (\yrr,2.8 + \lpml,\xrl + \lpml) ;

\draw[black, dashed,opacity=0.7] (\yrl + \lpml,2.8 + \lpml,\xrr) -- (\yrl + \lpml,2.8 + \lpml,\xrl) ; 
\draw[black, dashed,opacity=0.7] (\yrr - \lpml,2.8 + \lpml,\xrl) -- (\yrr - \lpml,2.8 + \lpml,\xrr) ;

\draw[black, dashed,opacity=0.7] (\yrl,2.8,\xrr - \lpml) -- (\yrr,2.8,\xrr - \lpml) ; 
\draw[black, dashed,opacity=0.7] (\yrl,2.8,\xrl + \lpml) -- (\yrr,2.8,\xrl + \lpml) ;

\draw[black, dashed,opacity=0.7] (\yrl + \lpml,2.8,\xrr) -- (\yrl + \lpml,2.8,\xrl) ; 
\draw[black, dashed] (\yrr - \lpml,2.8,\xrl) -- (\yrr - \lpml,2.8,\xrr) ;

\draw[black, dashed,opacity=0.7] (\yrl + \lpml,2.8,\xrr - \lpml) -- (\yrl + \lpml,3.4,\xrr - \lpml);
\draw[black, dashed,opacity=0.7] (\yrl,2.8,\xrr - \lpml) -- (\yrl,3.4,\xrr - \lpml);
\draw[black, dashed,opacity=0.7] (\yrl + \lpml,2.8,\xrr) -- (\yrl + \lpml,3.4,\xrr);

\draw[black, dashed,opacity=0.7] (\yrl + \lpml,2.8,\xrl + \lpml) -- (\yrl + \lpml,3.4,\xrl + \lpml);
\draw[black, dashed,opacity=0.7] (\yrl,2.8,\xrl + \lpml) -- (\yrl,3.4,\xrl + \lpml);
\draw[black, dashed,opacity=0.7] (\yrl + \lpml,2.8,\xrl) -- (\yrl + \lpml,3.4,\xrl);

\draw[black, dashed,opacity=0.7] (\yrr - \lpml,2.8,\xrr - \lpml) -- (\yrr - \lpml,3.4,\xrr - \lpml);
\draw[black, dashed,opacity=0.7] (\yrr,2.8,\xrr - \lpml) -- (\yrr,3.4,\xrr - \lpml);
\draw[black, dashed,opacity=0.7] (\yrr - \lpml,2.8,\xrr) -- (\yrr - \lpml,3.4,\xrr);

\draw[black, dashed,opacity=0.7] (\yrr - \lpml,2.8,\xrl + \lpml) -- (\yrr - \lpml,3.4,\xrl + \lpml);
\draw[black, dashed,opacity=0.7] (\yrr,2.8,\xrl + \lpml) -- (\yrr,3.4,\xrl + \lpml);
\draw[black, dashed,opacity=0.7] (\yrr - \lpml,2.8,\xrl) -- (\yrr - \lpml,3.4,\xrl);

 \node[circle, draw] at (2.6,3.2,3.2) () {S};

 \node[draw] at (2.6+0.7348,3.4,3.2- 0.7348) () {6};

\end{scope}
\end{scope}

\draw[dotted, gray] (sAr) -- (Ar) ;
\draw[dotted, gray] (sBr) -- (Br) ;
\draw[dotted, gray] (sFr) -- (Fr) ;
\draw[dotted, gray] (sEr) -- (Er) ;

\draw[dotted, gray] (sArb) -- (Arb) ;
\draw[dotted, gray] (sBrb) -- (Brb) ;
\draw[dotted, gray] (sFrb) -- (Frb) ;
\draw[dotted, gray] (sErb) -- (Erb) ;

\end{tikzpicture}
    \caption{LOH1 problem setup (to scale) with the upper and lower block separated by the yellow interface. 
    In blue is the suggested computational domain, in red is the computational domain we use with the PML. 
    The PML regions are sectioned along the boundary of the enlarged red block. 
    The point source is labelled S, and station 6 is marked on the Earth surface. The red region occupies approximately $2.1 \%$ of the volume of the blue region. 
Due to the efficient absorption properties of the PML, the computational load for this problem is significantly reduced.}
    \label{fig:loh1_setup}
\end{figure}
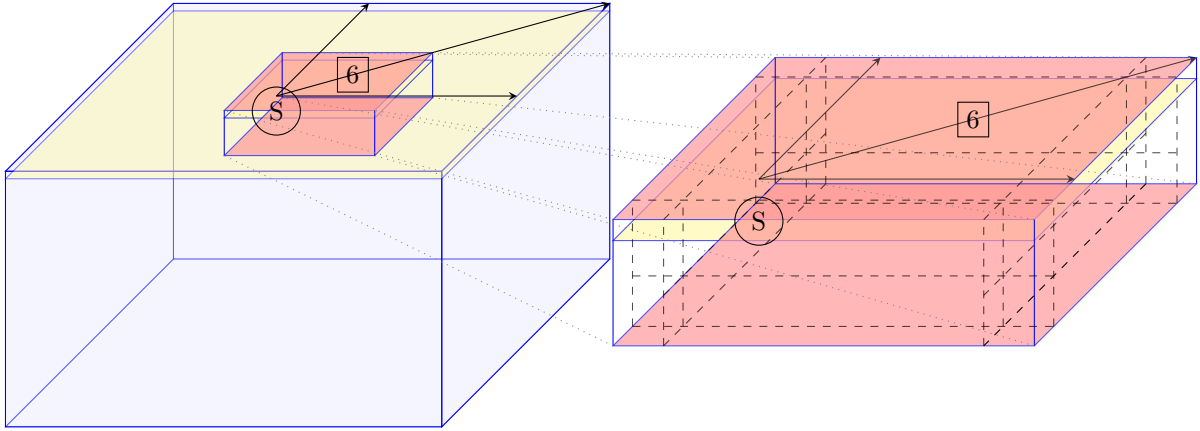

To evaluate the results of our schemes, simulated data is compared at the receiver, labelled station $6$ and $9$ in the LOH1 documentation \cite{Seismowine}, located on the face $F_{q,0}$ at the locations $(0,7.348,7.348)$ and $(0,8.647,8.647)$ relative to the epicentre $(0,0,0)$. 

In Figure \ref{fig:LOH1_6_all}, the numerical solutions are compared with the exact solution for upwind operators of order $2,3,4,5,6,7,8,9$. 
Note that the numerical solutions converge to the exact solution as the order of accuracy increases. At this mesh resolution, the accuracy of the solution becomes nearly optimal for the 6th order accurate upwind SBP operator. It would have been expected that the highest order accurate operator, upwind order 9, would yield the smallest error when compared with the exact solution. However, this is not the case. Beyond the 6th order accurate upwind SBP operator, the higher order accurate SBP operators do not necessarily improve the accuracy of numerical solution for the upwind SBP operators. This is consistent with the numerical dispersion relation analysis perform in Section \ref{sec:dsipersion_relation}. See also the numerical dispersion plots displayed in Figure \ref{fig:dispersion_all}.
\begin{figure}[H]
    \centering
    \includegraphics[width=\textwidth]{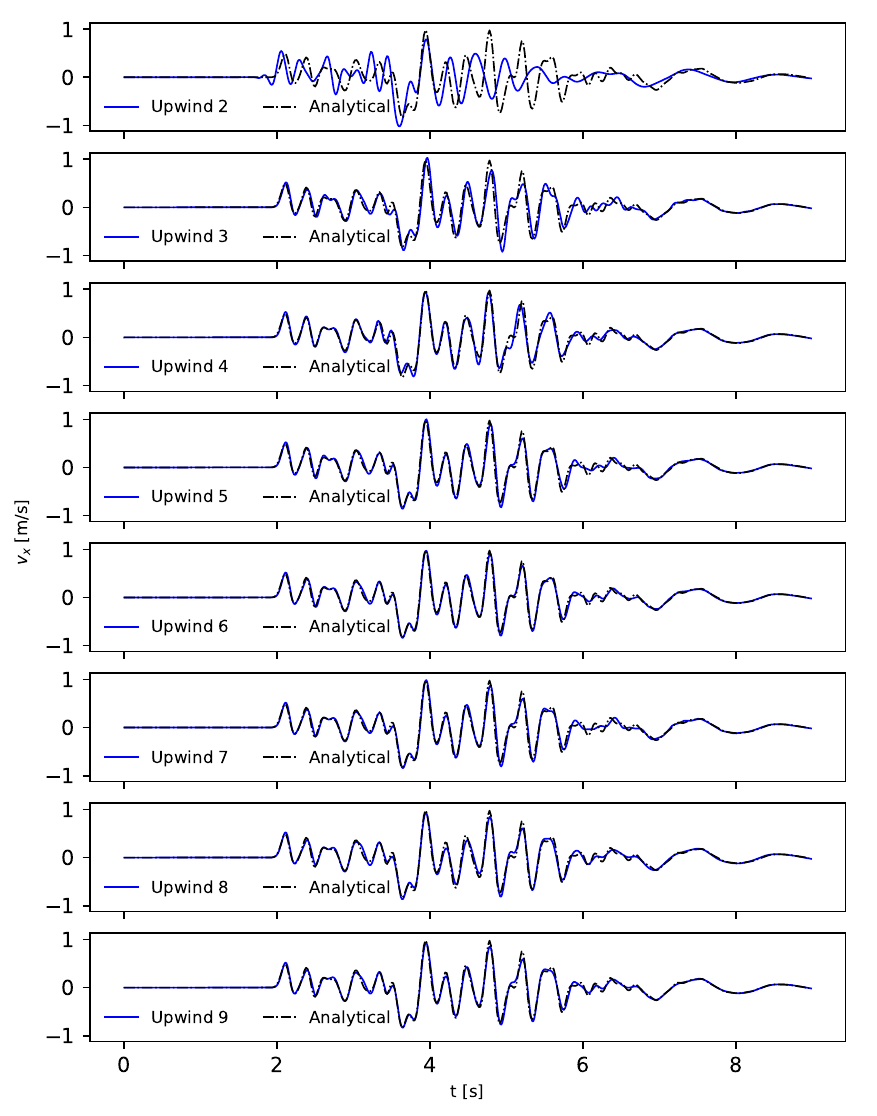}
     \vspace{-1em}
    \caption{Time history of the particle velocity vector at ``Station 6" $(x_r, y_r, z_r)=(0,7.348,7.348)$ upwind SBP FD operators of order $2,3,4,5,6,7,8,9$ compared with the analytical (exact) solution.}
    \label{fig:LOH1_6_all}
\end{figure}

In Figures \ref{fig:LOH1_6}--\ref{fig:LOH1_6_50m} we compare the analytical solution with specifically the numerical seismograms for the 6th order upwind SBP operator and the 6th order traditional SBP operator, at two mesh resolutions.
In Figure \ref{fig:LOH1_6}, it can be seen that both the upwind and traditional schemes have relatively comparable accuracy for the benchmark at $100$ m grid resolution. 
Both of these schemes approximate the analytical solution, with certain peaks in the data containing discrepancies between the numerical and analytical solutions, such as the peak at $\approx 6s$ in the $V_x$ data at station 6. We note however that the upwind SBP numerical seismogram matches the analytical solution better than the traditional SBP numerical seismogram in most parts of the solution. 

Next we refine the mesh by doubling the number of grid points in each direction. 
That is, in the thin low velocity (soft) upper-layer, block1, we use the grid size $h_x = 31.25$ m, in the $x$-direction and set the uniform grid size $h_y = h_z = h= 50$ m in the $y$- and $z$-direction. In the the hard lower-half-space, block2, we use the  uniform grid size $h_x=h_y = h_z = h = 50$ m in all directions.
Upon refinement, as seen in Figure \ref{fig:LOH1_6_50m},  the numerical solutions have converged and the numerical seismograms are visually identical with the analytical seismogram for both upwind SBP FD operator and the traditional SBP FD operator.  However, doubling the number of grid points in each direction increases the computational cost by a factor of $16$ for the 3D numerical simulation. A similar result is seen in Figure \ref{fig:LOH1_9} in the appendix.
The quantitative envelop misfit and phase misfit \cite{Kristekova_etal2009, Kristekova_etal2006} for these stations are below $\% 0.1$ for upwind operator and below $\% 0.5$ for the traditional operators. 
\begin{figure}[H]
    \centering
    \includegraphics{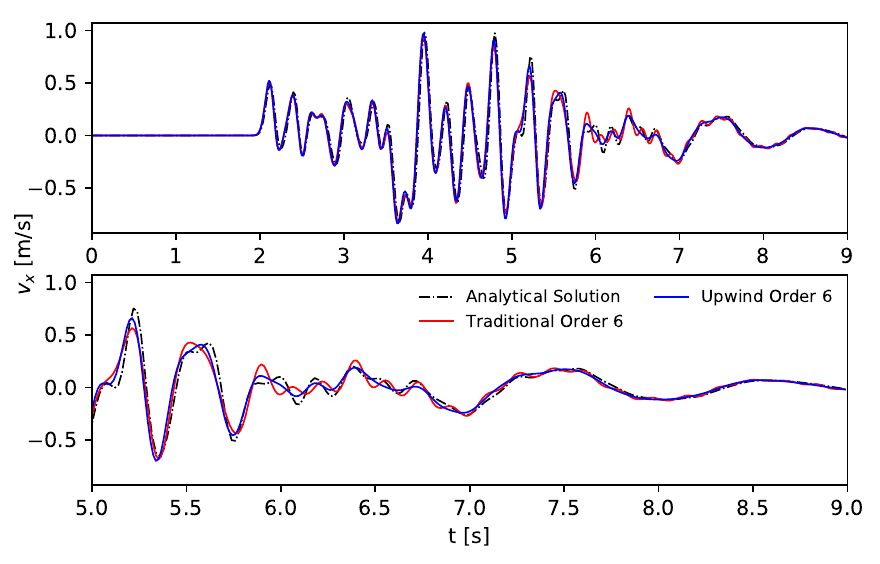}
   \caption{Time history of the particle velocity vector at ``Station 6" $(x_r, y_r, z_r)=(0,7.348,7.348)$ upwind and traditional operators of order $6$ compared with the analytical (exact) solution at $100m$ resolution.}
    \label{fig:LOH1_6}
\end{figure}

\begin{figure}[H]
    \centering
    \includegraphics{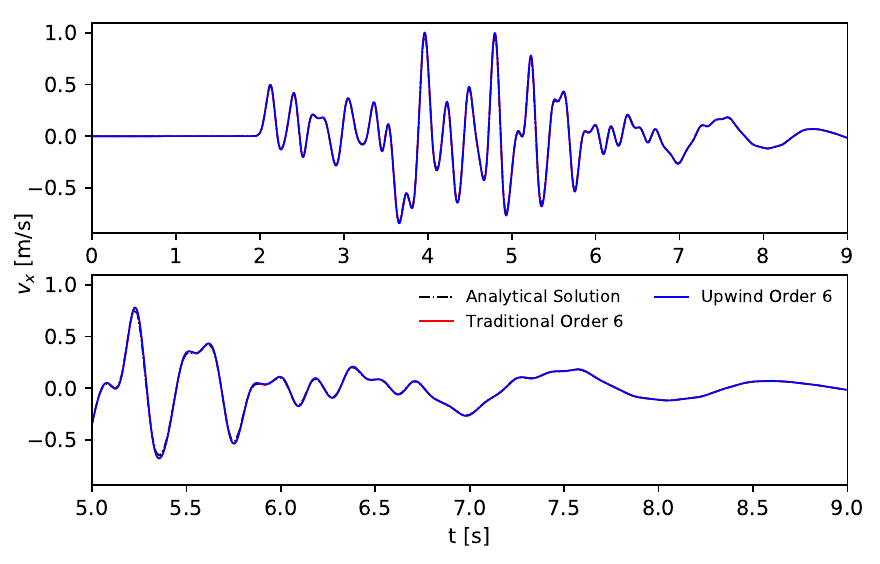}
    \caption{Time history of the particle velocity vector at ``Station 6" $(x_r, y_r, z_r)=(0,7.348,7.348)$ upwind and traditional operators of order $6$ compared with the analytical (exact) solution at $50m$ resolution.}
    \label{fig:LOH1_6_50m}
\end{figure}




\subsection{Large-scale numerical simulations in a 3D complex geometry}
We will now present numerical simulations in a complex geometry, with a geologically constrained complex non-planar free-surface topography.  Zugspitze is the tallest mountain in Germany, lying in the Wetterstein mountain range. The topography of this region is complex, with large variations in altitude across the Earths surface. We extracted the topography data of the Zugspitze region from the high resolution Alpine topography \cite{Copernicus}. 
Accurate and stable simulation of seismic wave propagation in this region is a computationally expensive task, the main reason being the high frequency wave modes generated by scattering  from the complex non-planar topography.  Effective numerical simulations will require an efficient HPC and parallel numerical elastic wave solver that scales perfectly with increasing supercomputing resources. We will present first some scaling tests to demonstrate efficient parallel implementation of the upwind SBP FD operators for the numerical simulations of 3D elastic waves in complex geometries, and will proceed later to the numerical simulations of seismic waves in the Zugspitze region.

\paragraph{The Zugspitze setup}
The modelling domain is $\Omega = \bigcup_{y,z \in [-5,85] } [\widehat{X}(x,y),80] \times [-5,85]^2 $ with the $x$-co-ordinate being positive in-towards the Earth, like our previous example, and $\widehat{X}(y,z)$ parameterising the Earth's surface. 
Our primary objective is to understand how different upwind SBP FD operators and the traditional SBP FD operator will resolve high frequency scattered wave modes generated by the complex free-surface topography without introducing spurious wave modes in the solution. To isolate scattering from complex geometries,  the material parameters of  the region is assumed to be constant and given by $\rho = 2700 ~\ kg/m^3$, $c_p = 6000 ~\ m/s$ and $c_s = 3464 ~\ m/s$. Therefore scattered wave fields seen in our simulations are primarily the effects of complex non-planar topography of the Zugspitze region.
 The location of the moment tensor point source is at $(10,10,10)$, that is $10$ km at depth.  As per the LOH1 experiment, we have free surface boundary conditions at the complex topography $\widehat{X}(y, z)$ and use the PML  to prevent artificial reflections  from the computational boundaries from contaminating the solution.

\subsubsection{Scaling tests}
For large scale numerical simulations of PDEs, it is imperative that the parallel numerical software is efficient and scalable. 
Here, we will perform strong scaling tests to verify the efficiency of our parallel implementation of high order upwind SBP operators for large-scale elastic wave simulations in 3D  geometrically complex elastic solids. We will demonstrate strong perfect scaling  with increasing supercomputing resources. 
To conduct these scaling tests we consider the Zugspitze setup as described above with $h=200$~m uniform resolution of the complex topography and the model volume. 
At this mesh resolution, the discritisation generates about $1$~billion DoF for the evolving unknown vector field, and about $1.45$~billion DoF needed to store the mesh, material parameters, the Jacobian and metric parameters. We run the simulation for $1$~s using the same time step for all order of accuracy. In these runs, the I/O is disabled as to only evaluate the performance of the numerical solver and the time-to-solution. 

The HPC implementation of WaveQLab is parallelised with MPI\footnote{The software is compiled on Gadi with Intel Fortran Compiler 2019.5.281 and is linked to Intel MPI 2019.5.281}.   The time-to-solution can be split into the CPU-time used for floating point operations to evaluate the FD stencil and the MPI-time used to communicate ghost nodes shared between adjacent processors. The CPU-time will be proportional to the width of the finite difference stencil, $(r+q + 1)$ for upwind operator and $(2q + 1)$ for the traditional operator, and the MPI-time which is proportional to the number of ghost $q$, where $r$ and $q$ are listed in Tables \ref{tab:upwind}-\ref{tab:traditional}. Note in particular, while the traditional and upwind FD SBP operators of order $6$ have the same stencil width, $7$, at any inter-processor boundary the traditional FD SBP operator has $q=3$ ghost nodes and  the  upwind FD SBP operator has $q=4$ ghost nodes.

In the scaling tests, we keep the problem size constant (about $1$~billion DoF for the evolving unknown vector field, about $1$~billion DoF for the right hand side and about $1.45$~billion DoF needed to store the mesh, material parameters, the Jacobian and metric parameters) while increasing the number of nodes, from $1,2,4,8, 16$ to $32$ nodes.  On Gadi super-computing infrastructure there are $48$ compute processor cores per node. So the number of processors increases from $48, 96, 192, 384, 768$ to $1536$ CPU cores.
In Figure \ref{fig:speedup}, we present the scaling and speed-up results for each of the upwind operators. The scaling plot shows the wall-clock time (time-to-solution) against number of nodes. A perfect strong scaling pattern is observed as the wall-clock time is halved at each time doubling the number of nodes. In Figure \ref{fig:speedup_o6} we compare the wall-clock time and the scaling plots for the traditional SBP FD operator of order 6 and the upwind SBP FD  of order 6. Note that the scaling plots as well as the supercomputing resources consumed by both operators are  very comparable.


 \begin{figure}[H]
    \centering
    \includegraphics[width=\textwidth]{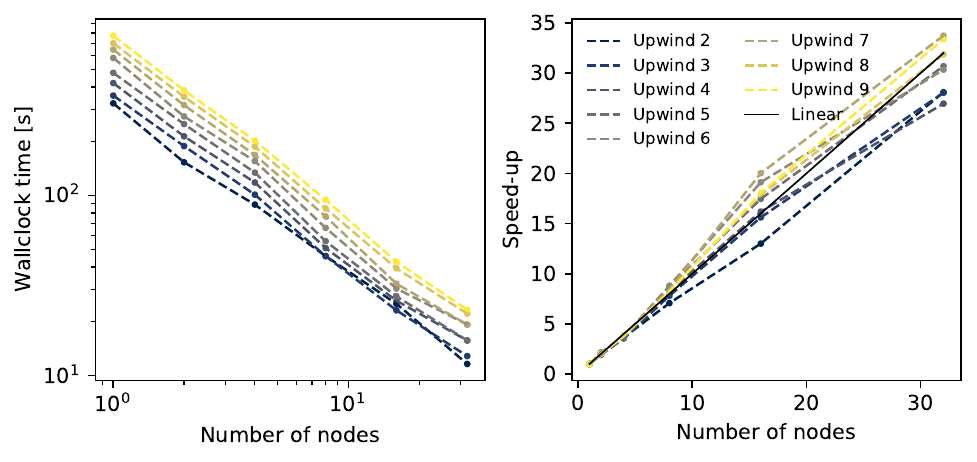}
    \caption{Scaling and speed-up plots for the upwind SBP FD operators of order $2,3,4,5,6,7,8,9$. For the scaling plot, note the log-scale in both axes.} 
    \label{fig:speedup}
\end{figure}

 \begin{figure}[H]
    \centering
    \includegraphics[width=\textwidth]{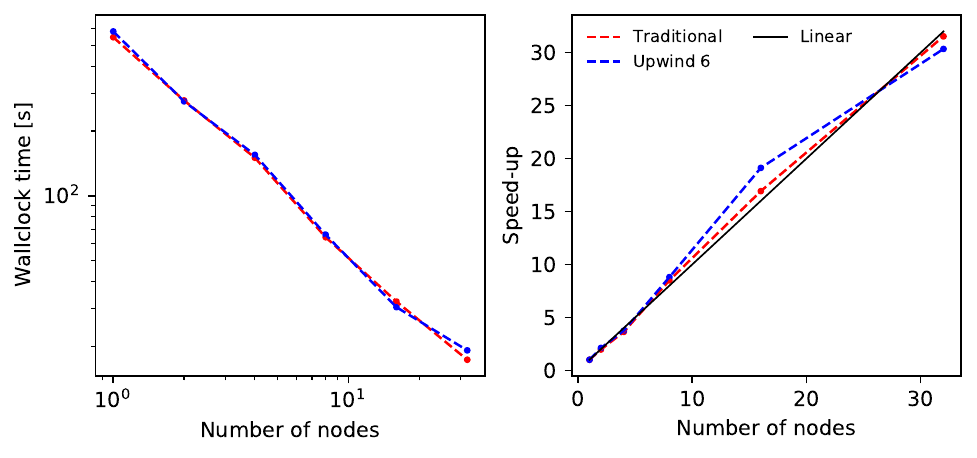}
    \caption{Scaling and speed-up plots for the order 6 traditional and upwind SBP FD operators. In red is the traditional SBP FD operator of order 6, and in blue is the upwind SBP FD  of order 6. For the scaling plot, note the log-scale in both axes.} 
    \label{fig:speedup_o6}
\end{figure}


We will now formulate a simple theoretical model for the scaling tests and speed-up.
Let $T_{N}$ denote the wall-clock time it takes to reach the final time for $N$ number of compute  nodes, and $S_N$ denotes the achieved speed-up. Ideally we have
\begin{align}
T_{N} := \frac{T_1}{N} \iff \log_{10}(T_{N}) = - \log_{10}(N) + \log_{10}(T_{1}), \qquad S_{N} := \frac{T_1}{T_{N}} = N,
\end{align}
where $T_1$ is the wall-clock time it takes to reach the final time on single node.
Thus, the experimental results are modelled by
\begin{align}
    \log_{10}(T_{N}) = m_1 \log_{10}(N) + b_1, \qquad S_N = m_2 N,
\end{align}
where $b_1 = \log_{10}(T_{1})$, and the model parameters $m_1$ and $m_2$ are determined experimentally. Note that in the theoretically ideal case we have $m_1 = -1$ and $m_2 = 1$. 
The  experimentally determined  constant parameters $m_1$ and $m_2$ are given in Table \ref{tab:scaling} for each of the upwind operators. 

\begin{table}[!h]
\begin{center}
 \begin{tabular}{||l c c c c c c c c||} 
 \hline
 & Upwind 2 & Upwind 3 & Upwind 4 & Upwind 5 & Upwind 6 & Upwind 7 & Upwind 8 & Upwind 9  \\ [0.5ex] 
 \hline\hline
 $m_1$ &
-0.93857 &
-0.97933 &
-0.97357 &
-1.01432 &
-1.01087 &
-1.0402 &
-1.01757 &
-1.02553 \\
$b_1$ &
2.50129 &
2.56456 &
2.62281 &
2.69412 &
2.75725 &
2.81822 &
2.85427 &
2.8965 \\
 \hline
 $m_2$ &
0.86685 &
0.90015 &
0.88412 &
0.98859 &
1.00183 &
1.08999 &
1.01965 &
1.05694 \\
 [1ex] 
 \hline
\end{tabular}
\end{center}
    \caption{Regression co-efficients for speed-up and parallel scaling for the upwind SBP FD operators of order $2,3,4,5,6,7,8,9$.}
    \label{tab:scaling}
\end{table}

\begin{table}[!h]
\begin{center}
 \begin{tabular}{||l c c ||} 
 \hline
 & Upwind 6 & Traditional 6  \\ [0.5ex] 
 \hline\hline
 $m_1$ &
-1.01087 &
-1.01241 \\
$b_1$ &
2.75725 &
2.74747 \\
 \hline
 $m_2$ &
1.00183 &
1.00162 \\
 [1ex] 
 \hline
\end{tabular}
\end{center}
    \caption{Regression co-efficients for sixth order speed-up and parallel scaling for the traditional SBP FD operator of order $6$ and the upwind SBP FD  of order $6$.}
    \label{tab:scaling_o6}
\end{table}

From Table \ref{tab:scaling}-\ref{tab:scaling_o6}, note that $m_1 \approx -1$ and $m_2 \approx 1$, which indicate nearly perfect strong scaling.
%
%
In particular, the speed-up plot shows a nearly linear scaling with problem size for upwind SBP operators of order $2, 3, 4, 5$, For higher order operators,of order $ 6, 7, 8, 9$, super-linear scaling is observed, with a maximal scaling rate found for the 6th and 7th order operators.  
 
 \subsubsection{The Zugspitze simulation}
For the Zugspitze simulation, we  run the simulation until the final time $t = 30$ s such that the elastic waves propagate through the media and leave the computational domain. As the waves propagate through the media, they interact with the complex topography and generate  high frequency scattered wave modes. Because of the complex non-planar topography, the Zugspitze model has no analytical solutions. To verify accuracy we have generated a reference solution using the upwind SBP FD  operator of order 6 on a resolved grid ($h = 100$~m). We note that the reference solution has been benchmarked against numerical solutions produced by ExaHyPE \cite{ExaHyPE2019,Duru_exhype_2_2019}, a DG solver, and both numerical solutions are in perfect agreement at sufficiently high frequencies. Numerical simulations using the traditional and upwind SBP FD  operator of order 6 on a resolved grid ($h = 100$~m) require about $33000$ CPU-hours, including I/O. At $h = 100$~m resolution, the discritisation generates about $8$~billion DoF for the evolving unknown vector field, about $8$~billion DoF for the right hand side and about $11.6$~billion DoF needed to store the mesh, material parameters, the Jacobian and metric parameters.  Our goal is to achieve similar accuracy with less computational resources by utilising upwind SBP operators.

 
 We consider the lower grid-spacing ($ h= 200$~m)  mesh resolution,  the final time $t = 30$~s. Note again that at $ h= 200$~m mesh resolution the discritisation generates only $1$~billion DoF for the evolving unknown vector field, and about $1.45$~billion DoF needed to store the mesh, material parameters, the Jacobian and metric parameters. We run the simulation until the final time using $480$ CPU-cores  with a wall-clock time of $\approx 0.83$ hours. In total, the $100$ m grid-spacing experiments required about $\approx 33000$ CPU-hours, whilst the $200$ m grid-spacing only needs $\approx 400$ CPU-hours. These CPU-hours also include time and resources needed for I/O, to output the wave fields on the entire topography. The required supercomputing resources are listed in Table \ref{tab:supercomputing_o6}. It is also significantly important to note that the $ h= 200$~m mesh resolution experiments use only $\%1.21$ of the CPU-hours  required by the $100$ m grid-spacing experiments, thus saving about $\%98.78$ of supercomputing resources.

\begin{table}[!h]
\begin{center}
 \begin{tabular}{||l c c ||} 
 \hline
 & $h=100$~m & $h=200$~m  \\ [0.5ex] 
 \hline\hline
 Memory [GB] &
220.8 &
27.6 \\
CPU-hours &
33000 &
400 \\
 [1ex] 
 \hline
\end{tabular}
\end{center}
    \caption{Supercomputing resources required by the traditional SBP FD operator of order $6$ and the upwind SBP FD  of order $6$.}
    \label{tab:supercomputing_o6}
\end{table}

We present the computational results in Figures \ref{fig:zugs_movie}, \ref{fig:zugs_22_22}, \ref{fig:zugs_all}, \ref{fig:zugs_40_40}. 
Figure \ref{fig:zugs_movie} shows the snapshots of the numerical solution propagating through time on the surface $\widehat{X}(y, z)$, at $h = 100$~m resolution, illustrating the propagation of the surface elastic waves through the complex free-surface topography. 
Here we can see the scattering of high frequency waves conforming to the complex free-surface topography present.
We have placed two stations, at near source $(y=22.4,z=22.4)$ and at the peak of Mount Zugspitze $(y=40,z=40)$ where the numerical solutions are sampled and compared for different SBP FD operators and at different resolutions.

Figure \ref{fig:zugs_22_22} shows the numerical solutions at the near-source station data at $(22.4,22.4)$ on the Earth surface.  In this figure, the numerical solutions of the upwind and traditional SBP FD operator of order $6$  are compared at $200$~m and $100$~m resolution. 
At   $h = 200$~m resolution the upwind SBP FD operator resolves the waveform sufficiently accurate without introducing spurious unresolved wave modes in the numerical solution. The traditional SBP operator resolves some of the important features in the waveform but it also introduces additional large amplitude spurious oscillations which can potentially destroy the accuracy of numerical simulations.
At $h=100$~m resolution the numerical solutions for traditional SBP operator  converge to the reference solution. 
Although, the spurious oscillation diminishes with increasing mesh resolutions, that is for $h=100$ m grid spacing, however, doubling the mesh resolution by using $h=100$ m grid spacing, increases the computational resources for the 3D problem by several orders of magnitude.

\begin{figure}[H]
    \centering
    \includegraphics[width=0.46\textwidth]{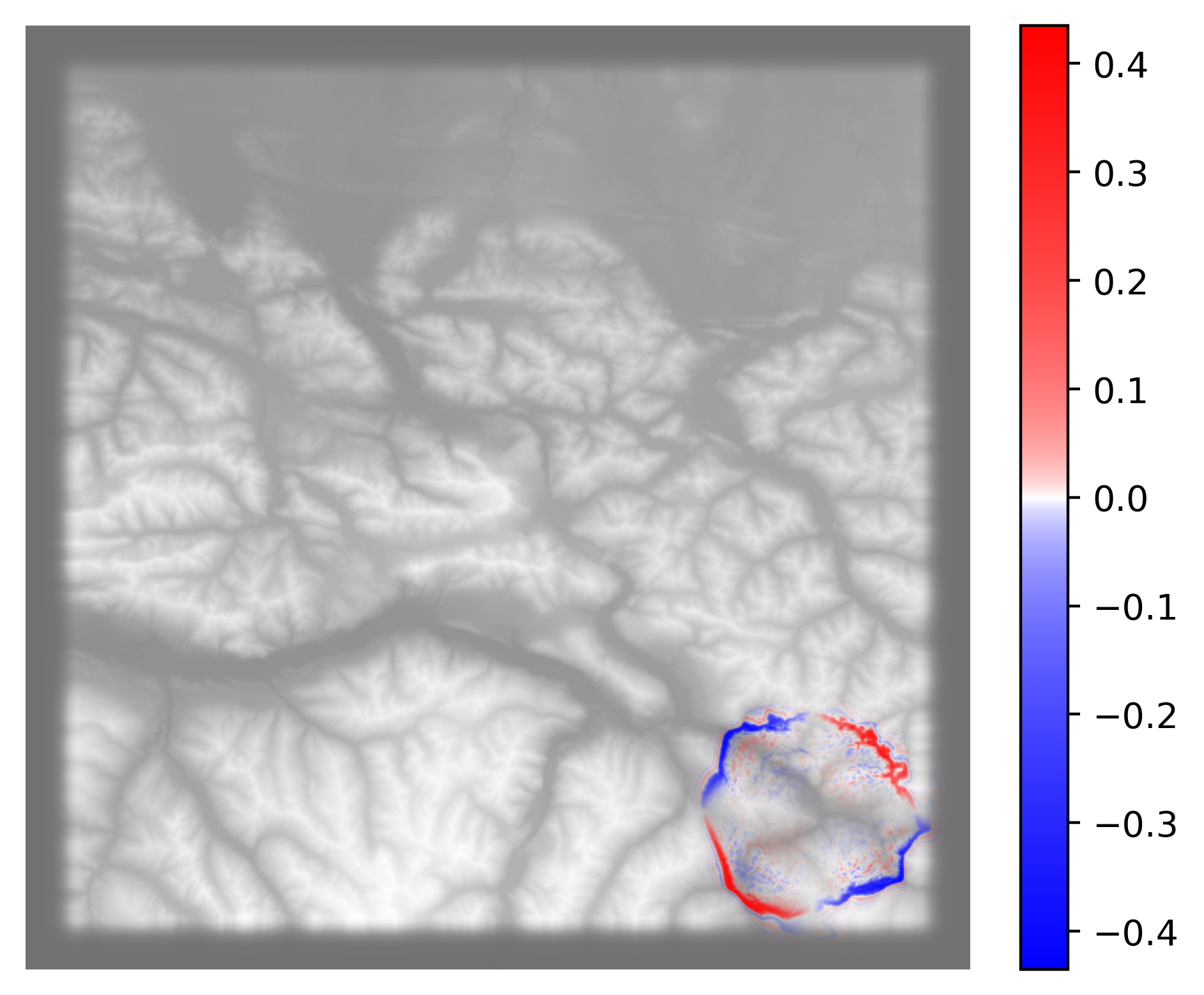}
    \includegraphics[width=0.46\textwidth]{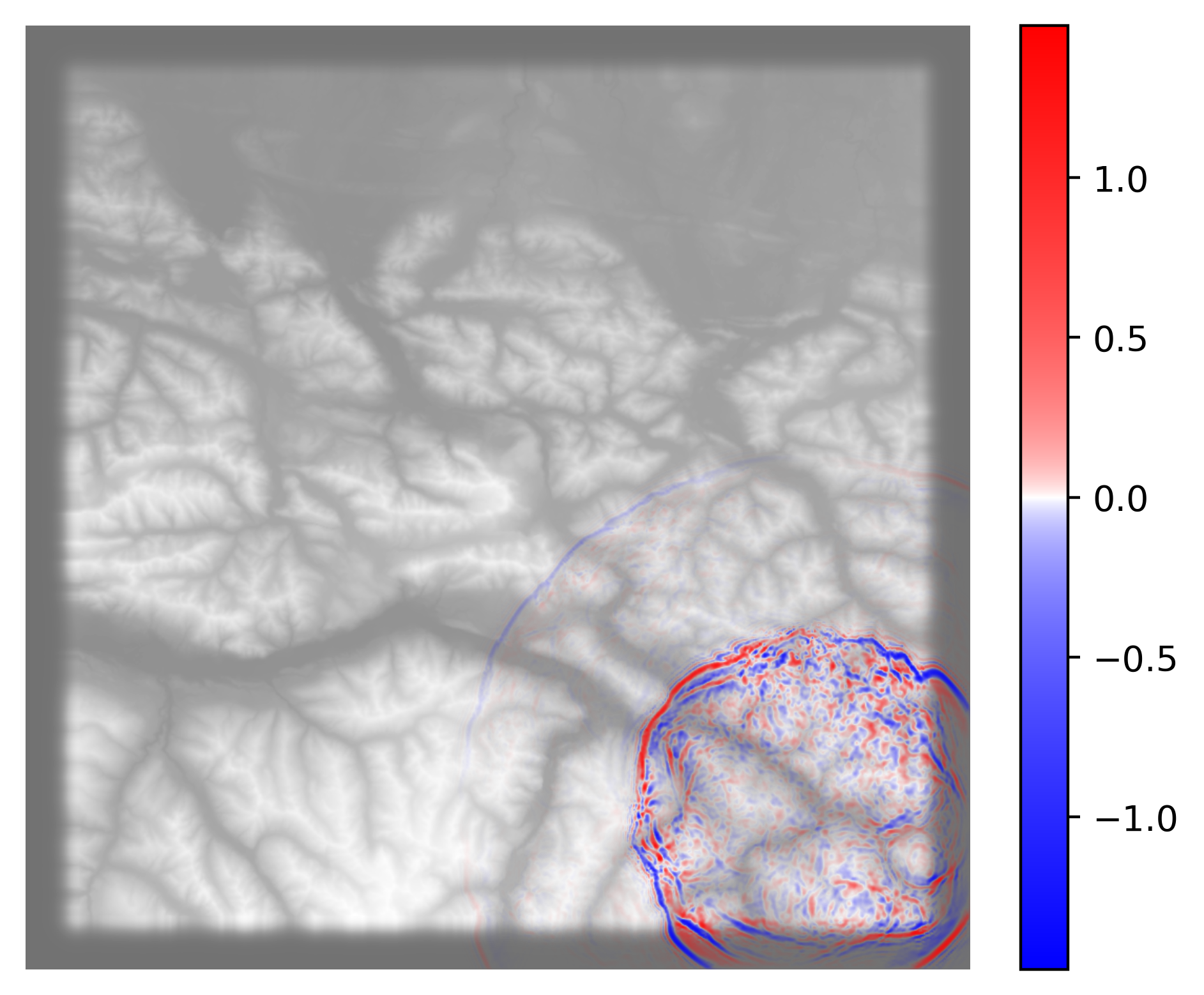}
    \includegraphics[width=0.46\textwidth]{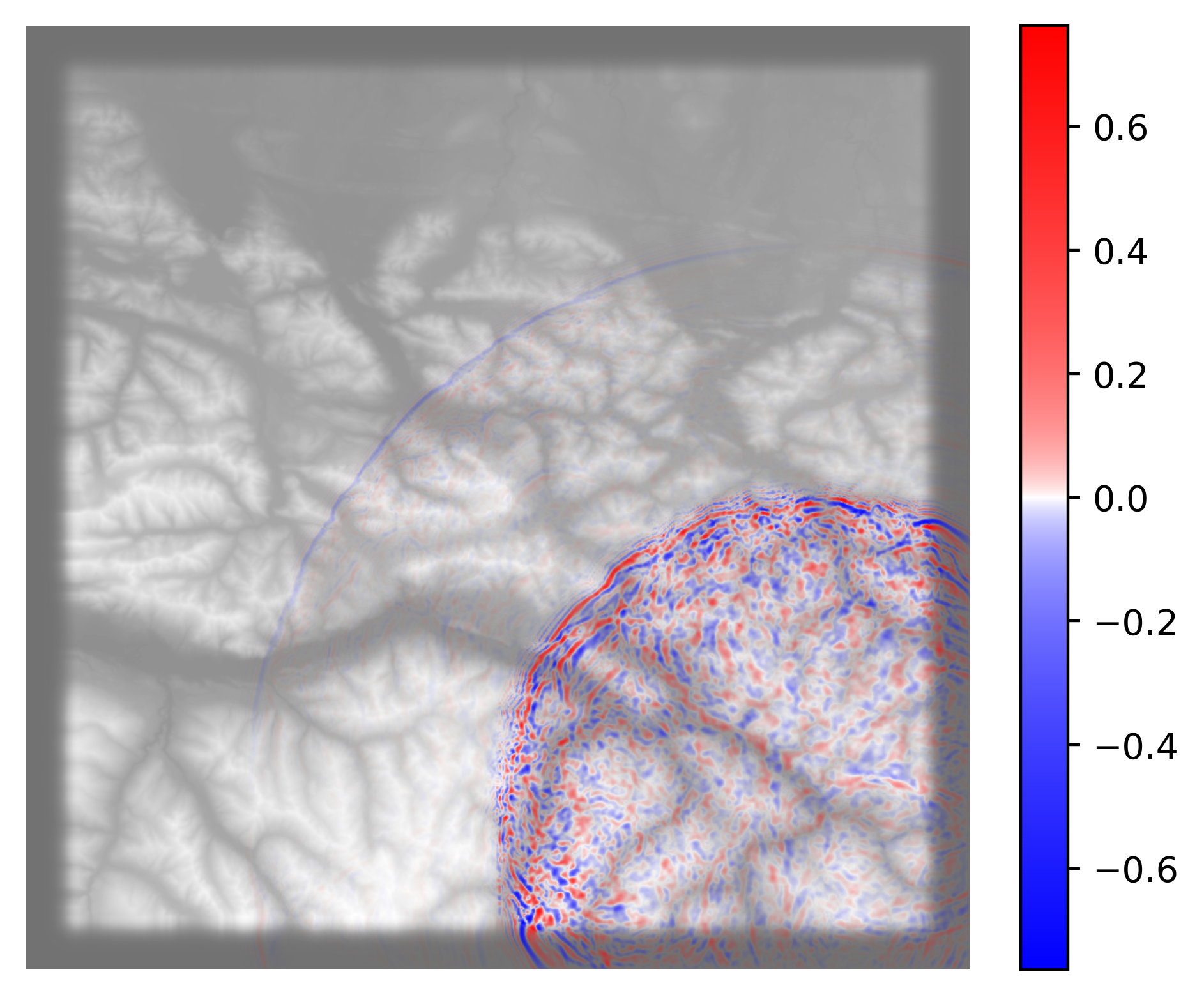}
    \includegraphics[width=0.46\textwidth]{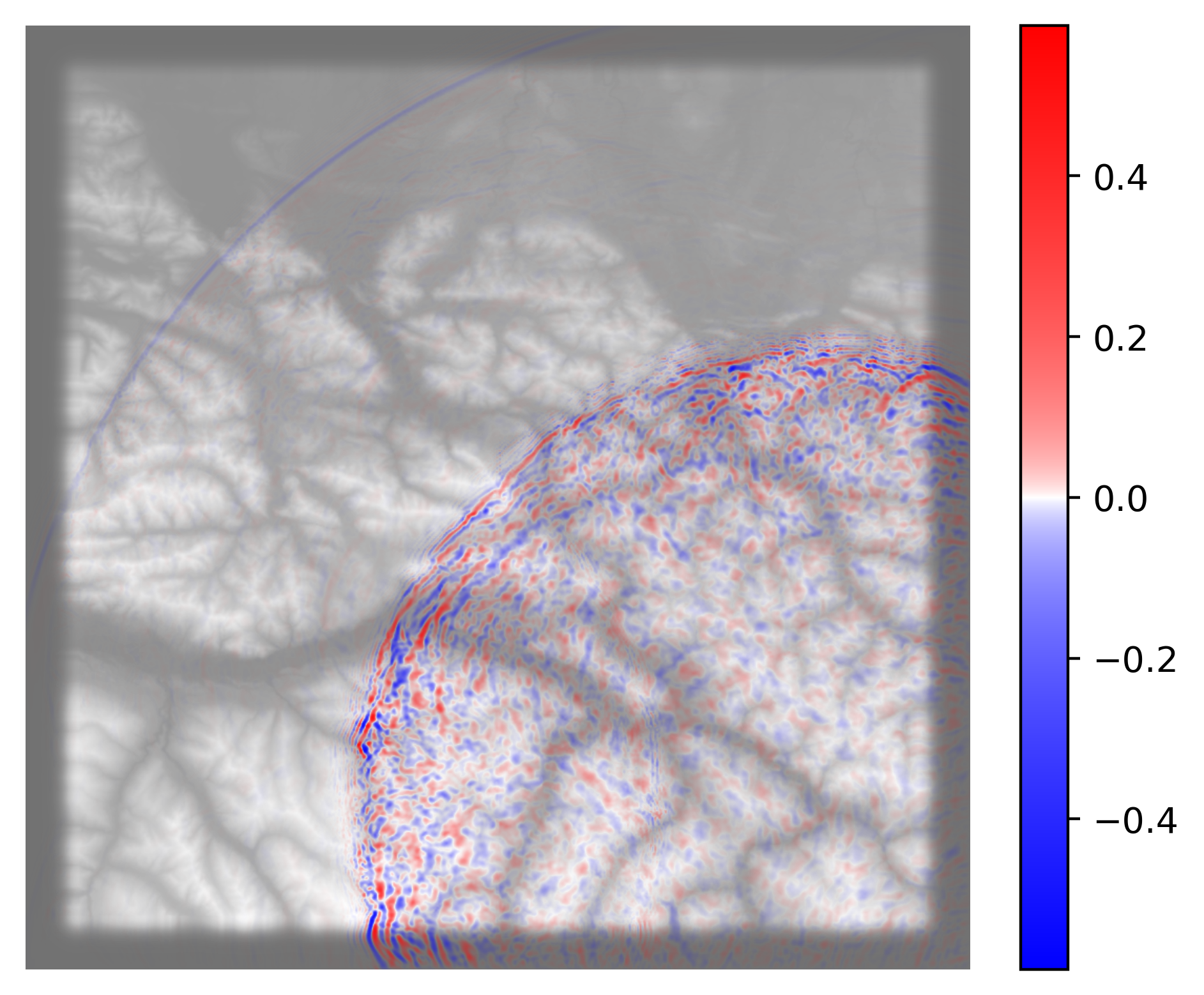}
    \caption{Snapshots of the simulated particle velocity on the geometrically complex free-surface topography   at $t \in \{2.56, 5.95, 9.18, 13\}$ seconds. The simulation is performed with the upwind SBP FD operator of order $6$. Here, the background grey represents the altitude given from the underlying topography. } 
    \label{fig:zugs_movie}
\end{figure}

\begin{figure}[H]
    \centering
    \includegraphics[width=\textwidth]{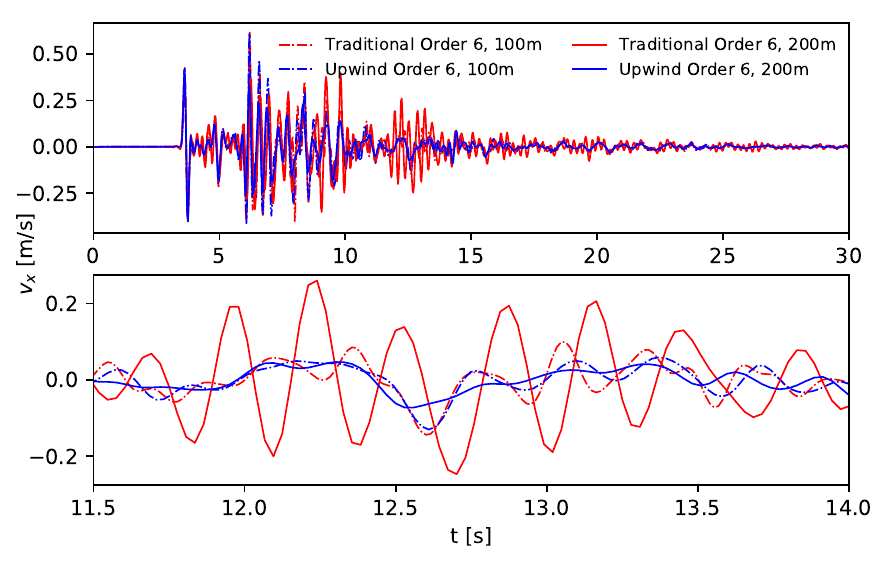}
    \caption{Seismograph from a near-source station placed at $(22.4,22.4)$ on the Earths surface. The lower plot is a zoomed section highlighting the spurious oscillations generated by the traditional SBP FD operator. }
    \label{fig:zugs_22_22}
\end{figure}

Figure \ref{fig:zugs_all} shows all of the upwind schemes, of order $2,3,4,5,6,7,8,9$, at $200$~m resolution compared to the reference $100$~m resolution solution, at the station placed at the peak of Mount Zugspitze $(y=40,z=40)$ on the Earths surface. Note that the numerical solutions converge to the reference solution as the order of accuracy increases. Again, for the marginally resolved mesh the accuracy of the solution becomes nearly optimal for the 6th order accurate upwind SBP operator.  Beyond the 6th order accurate upwind SBP operator, the higher order accurate SBP operators do not necessarily improve the accuracy of numerical solution for the upwind SBP operators without introducing spurious oscillations. We also identify that the odd-order operators have more spurious high frequency modes in their numerical solutions than their even-order operator counterparts.  As above, this is consistent with the numerical dispersion relation analysis of Section \ref{sec:dsipersion_relation} and numerical dispersion plots displayed in Figure \ref{fig:dispersion_all}.


Finally, in Figure \ref{fig:zugs_40_40} we compare the traditional and upwind order 6 computations on a marginally resolved grid to the reference solution at the station placed at the peak of Mount Zugspitze $(y=40,z=40)$ on the Earths surface. At this resolution, the traditional operator solution has spurious high frequency wave modes which are not present in the upwind solution. The upwind 6 solution computed at $h = 200m$ resolution is then compared with the traditional order 6 solution computed at $h = 100m$ resolution against the reference solution. When compared to the reference, the upwind 6 on a marginally resolved mesh ($h = 200$~m) and the  traditional operators on a well resolved mesh ($h = 100$~m) capture the key features of the waveform. Much of the spurious oscillation in the traditional scheme has diminished with increasing mesh resolution, that is for $h=100$ m grid spacing. However, doubling the mesh resolution by using $h=100$ m grid spacing, increases the computational resources for the 3D problem by several orders of magnitude.

\begin{figure}[H]
    \centering
    \includegraphics[width=0.85\textwidth]{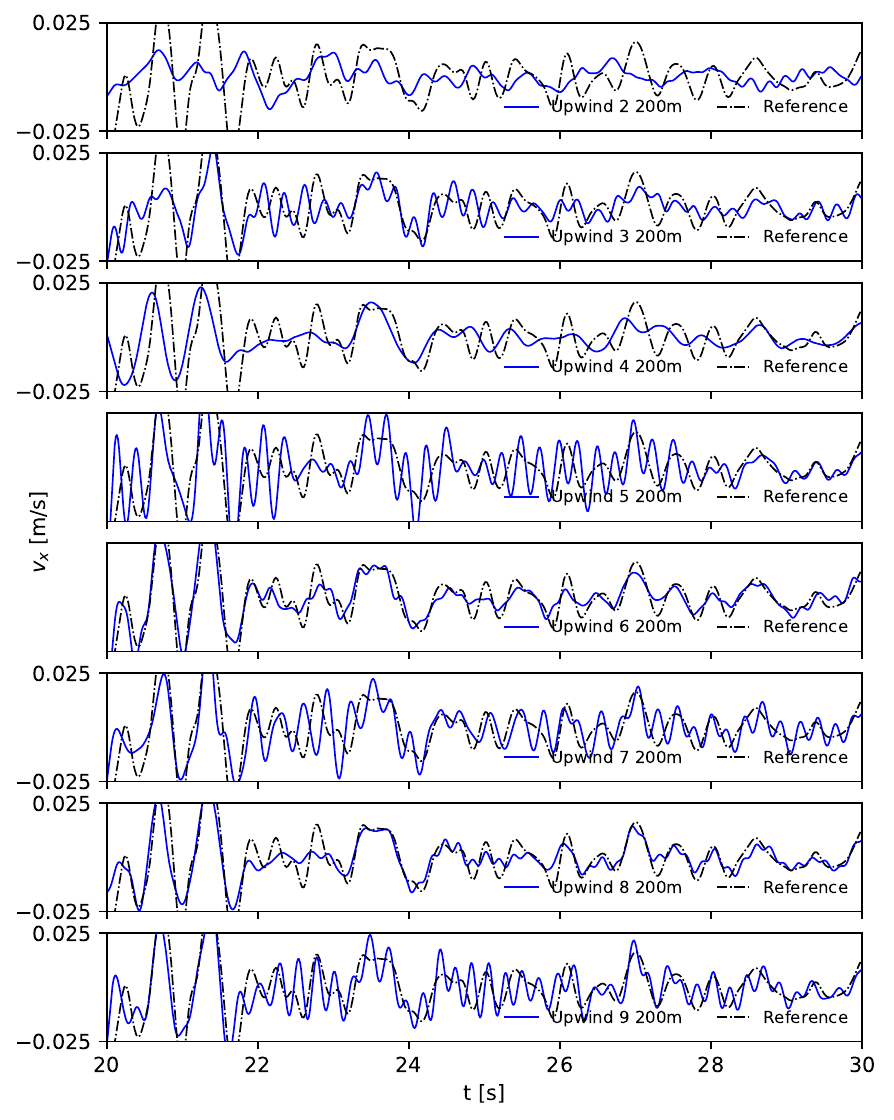}
    \caption{Seismograph from a station placed at the peak of Mount Zugspitze $(y=40,z=40)$ on the Earth's surface, truncated to see the coda waves. All odd-order $(3, 5, 7, 9)$ accurate SBP FD operators generate high frequency spurious modes.}
    \label{fig:zugs_all}
\end{figure}


\begin{figure}[H]
    \centering
    \includegraphics[width=0.85\textwidth]{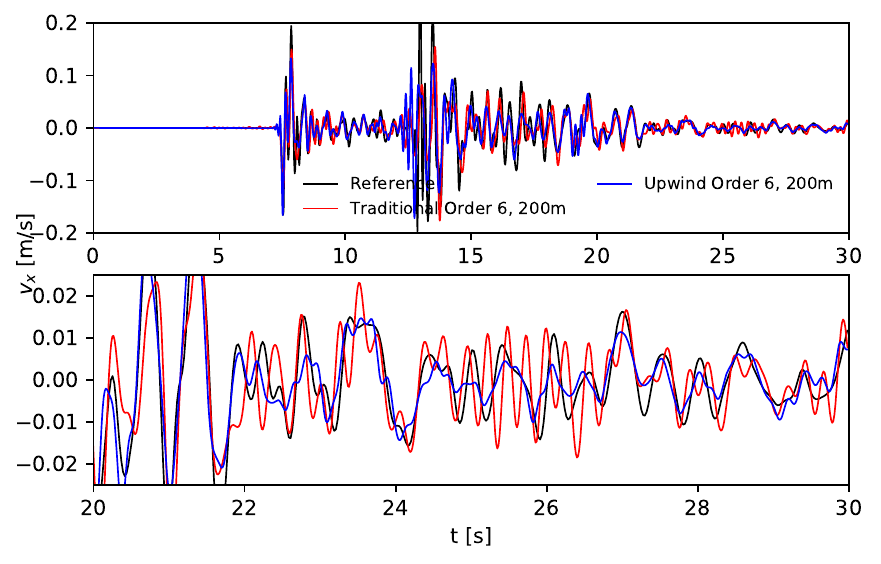}
    \includegraphics[width=0.85\textwidth]{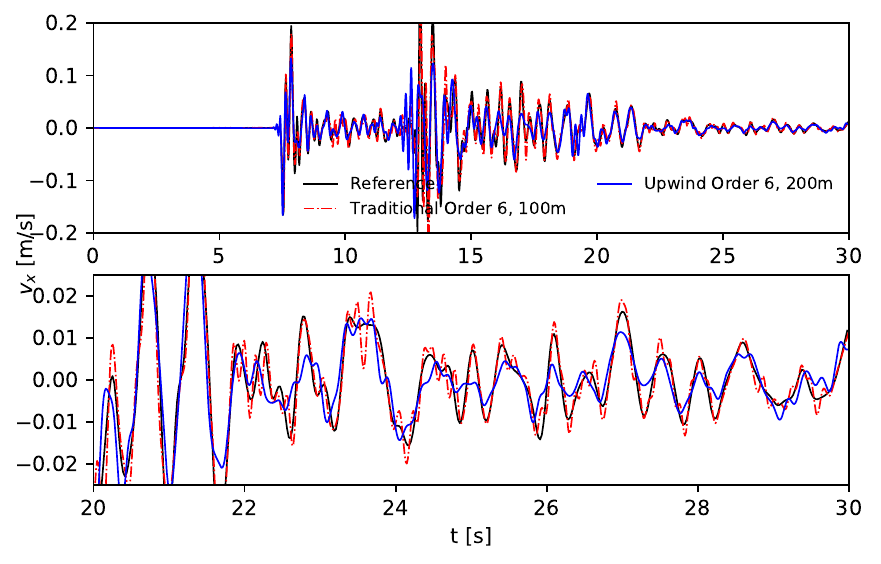}
    \caption{Seismograph's from numerical solutions of order 6 upwind and traditional operators from a station placed at the peak of Mount Zugspitze $(y=40,z=40)$ on the Earths surface. 
    In the top panel we compare the traditional and upwind solutions on marginal grids to a high resolution reference solution.
    In the lower panel we compare a upwind computation on a marginal grid to a high-resolution traditional computation. Note that in each panel the second plot is a zoomed section of the seismogram focusing on coda waves.}
    \label{fig:zugs_40_40}
\end{figure}


%



\section{Summary and outlook}\label{sec:conclusions}
 High-order accurate SBP FD methods are attractive for efficient  large-scale numerical  simulation of hyperbolic wave propagation problems. 
 Traditional SBP FD operators that approximate first-order spatial derivatives with central-difference stencils often have spurious unresolved wave-modes in their numerical solutions. 
 On marginally resolved computational grids, these spurious wave-modes have the potential to destroy the accuracy of numerical solutions for a first-order hyperbolic partial differential equation, such as the elastic wave equation.  
 Recently derived high order accurate upwind SBP operators \cite{Mattsson2017} based non-central (upwind) FD stencils have the potential to suppress  these poisonous spurious wave-modes on marginally resolved computational grids.

 We have demonstrated in this paper that not all high order upwind SBP FD operators are applicable. Numerical dispersion relation analysis shows that odd-order upwind SBP FD operators also support spurious unresolved high frequency wave modes on marginally resolved meshes. Even-order upwind SBP FD operators (of order $2, 4, 6$) do not support spurious unresolved wave modes and have better numerical dispersion properties than traditional SBP FD operators and odd-order upwind SBP FD operators.

 To ensure the accuracy of numerical solutions of the three space dimensional (3D) elastic wave equation in complex geometries, we discretise the 3D elastic wave equation with a pair of upwind SBP operators, on boundary-conforming curvilinear meshes.
 Using  the energy method we prove that the numerical method is stable, and energy conserving. We derive a priori error estimates and prove the convergence of the numerical error.

 We presented numerical simulations of the 3D elastic wave equation in heterogeneous media with complex non-planar free surface topography, including numerical simulations of community developed seismological benchmark problems. 
 Our results show that even-order upwind SBP  FD operators of order are more robust and less prone to numerical dispersion errors on marginally resolved meshes when compared to odd-order upwind SBP  FD operators and traditional SBP FD operators, thereby increasing efficiency.

 We performed  strong  scaling  tests, demonstrating nearly perfect strong scaling and   verifying  the  efficiency  of  our  parallel implementation  of  high  order  upwind  SBP  operators  for large scale elastic  wave  simulations  in  3D  complex geometries. We believe that the method and the software will increase the efficiency of numerical simulations of elastic waves in many applications such as earthquake engineering, natural minerals and energy resources exploration, as well in strong-ground motion analysis and underground fluid injection monitoring. 

Our preliminary 3D dynamic earthquake rupture simulations show promise in increasing efficiency through using upwind SBP operators with good dispersion relation properties for simulating nonlinear friction laws in elastic solids,  and earthquake source modelling. 

We have used the PML  to enable efficient domain truncation and prevent artificial numerical reflections, from the computational boundaries, from contaminating the numerical simulations. Because of the asymmetric properties of the PML and the upwind SBP operators, a stable implementation of the PML for the 3D IBVP elastic wave equation  using the upwind SBP operators is a non-trivial task. The details of the numerical treatment of the PML using upwind SBP operators will be reported in a forthcoming paper.

\appendix
\section{Hat-variables}\label{sec:hat_variables}
 The hat-variables encode the solution of the IBVP on the boundary/interface. The hat-variables  are constructed such that they preserve the amplitude of the outgoing waves and exactly satisfy the physical boundary conditions ~\cite{DuruGabrielIgel2017}. To be more specific, the hat-variables are solutions of the Riemann problem constrained against physical boundary conditions \eqref{eq:BC_General2}. We refer the reader to  ~\cite{DuruGabrielIgel2017,ExaHyPE2019} for more detailed discussions. Once the hat-variables are available, we construct physics based numerical flux fluctuations by penalizing data against the incoming characteristics \eqref{eq:characteristics} at the element faces.
 
For $Z_\eta  > 0$, we define the characteristics
\begin{align}\label{eq:charac}
q_\eta  = \frac{1}{2}\left(Z_\eta  v_\eta  + T_\eta \right), \quad
p_\eta  = \frac{1}{2}\left(Z_\eta  v_\eta  -  T_\eta \right), \quad  \eta \in \{ x, y, z \}.
\end{align}
 Here, $q_\eta$ are the left going waves, and $p_\eta$ are the right going waves.
We will  construct boundary data which satisfy the physical boundary conditions \eqref{eq:BC_General2} exactly and preserve the amplitude of the outgoing waves  $q_\eta $ \text{at}   $\xi \equiv 0$, and  $p_\eta $ at $\xi \equiv 1$.
That is  introduce $\widehat{v}_\eta$ and $\widehat{T}_\eta$ such that 
{
\begin{align}\label{eq:BC_hat_1bc}
&{q}_\eta \left(\widehat{v}_\eta, \widehat{T}_\eta , Z_\eta \right) = {q}_\eta \left({v}_\eta , {T}_\eta , Z_\eta \right),  \quad \text{at} \quad \xi \equiv 0,\\
\nonumber
&{p}_\eta \left(\widehat{v}_\eta , \widehat{T}_\eta , Z_\eta \right) = {p}_\eta \left({v}_\eta , {T}_\eta , Z_\eta \right), \quad \text{at} \quad \xi \equiv 1.
\end{align}
}
The variables $\widehat{v}_\eta$ and $\widehat{T}_\eta$ should also satisfy the physical boundary condition \eqref{eq:BC_General2}, and we have
{
\begin{align}\label{eq:BC_hat_2bc}
&\frac{Z_{\eta}}{2}\left({1-\gamma_\eta }\right)\widehat{v}_\eta  -\frac{1+\gamma_\eta }{2} \widehat{T}_\eta  = 0, \quad \text{at} \quad \xi \equiv 0,
\\
\nonumber
&\frac{Z_{\eta}}{2} \left({1-\gamma_\eta }\right)\widehat{v} _\eta + \frac{1+\gamma_\eta }{2}\widehat{T}_\eta  = 0, \quad \text{at} \quad \xi \equiv 1.
\end{align}
}
The algebraic problem defined by equations \eqref{eq:BC_hat_1bc} and \eqref{eq:BC_hat_2bc},  has a unique solution, namely
{
\begin{align}\label{eq:data_hat}
&\widehat{v}_\eta   = \frac{(1+\gamma_\eta )}{Z_{\eta}}q_\eta , \quad \widehat{T}_\eta   = {(1-\gamma_\eta )}q_\eta ,  \quad \text{at} \quad \xi \equiv 0,
\\
\nonumber
&\widehat{v}_\eta  = \frac{(1+\gamma_\eta )}{Z_{\eta}}p_\eta ,  \quad \widehat{T}_\eta   = {-(1-\gamma_\eta )}p_\eta , \quad \text{at} \quad \xi \equiv 1.
\end{align}
}
The expressions in \eqref{eq:data_hat} define a rule to update particle velocity vector and traction vector on the boundaries $\xi = 0, 1$. That is 
{
\begin{align}\label{eq:boundary_data_hat}
v_\eta   = \widehat{v}_\eta  , \quad {T}_\eta  = \widehat{T}_\eta  , \quad \text{at} \quad \xi \equiv 0, 1.
\end{align}
}
  
 Note in particular that the hat-variables $\widehat{v}_\eta , \widehat{T}_\eta $,  satisfy the following  inequalities 
\begin{equation}\label{eq:identity_3_bc}
\begin{split}
 &\widehat{T}_\eta \widehat{v}_\eta  = \frac{1-\gamma_\eta ^2}{Z_{\eta}}{q}_\eta ^2\left({v}_\eta , {T}_\eta , Z_\eta \right) \ge 0,  \quad \text{at} \quad \xi \equiv 0, \\
 &\widehat{T}_\eta \widehat{v}_\eta  = -\frac{1-\gamma_\eta ^2}{Z_{\eta}}{p}_\eta ^2\left({v}_\eta , {T}_\eta , Z_\eta \right) \le 0 , \quad \text{at} \quad \xi \equiv 1.
\end{split}
\end{equation}
The inequalities \eqref{eq:identity_3_bc} will be crucial in proving numerical stability.
Please see also  ~\cite{DuruGabrielIgel2017} for  more details.

\section{LOH1 Station 9} 
\begin{figure}[H]
    \centering
    \includegraphics[width=\textwidth]{LOH1_9x_100m_.pdf}
    \includegraphics[width=\textwidth]{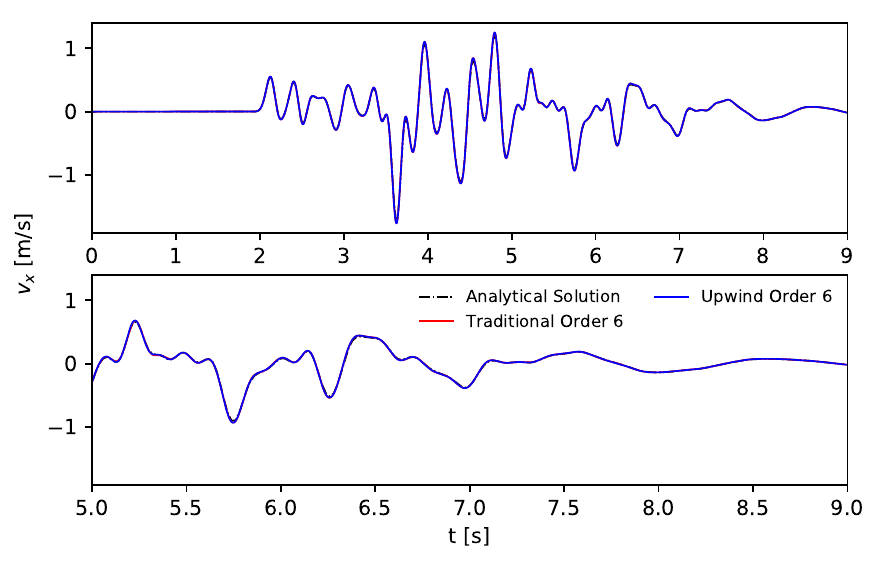}
    \caption{Time history of the particle velocity vector at ``Station 9" $(x_r, y_r, z_r)=(0, 8.647,8.647)$ with two levels of mesh refinements $h = 100$ m and $h =50$ m.}
    \label{fig:LOH1_9}
\end{figure}


\section*{Acknowledgments}
This research was undertaken with the assistance of resources and services from the National Computational Infrastructure (NCI), which is supported by the Australian Government. The authors also gratefully acknowledge the Gauss Centre for Supercomputing e.V. \footnote{www.gauss-centre.eu} for funding this project by providing computing time on the GCS Supercomputer SuperMUC-NG at Leibniz Supercomputing Centre \footnote{www.lrz.de}. Frederick Fung and Christopher Williams acknowledge support from the Australian Government Research Training Program Scholarship.

\bibliographystyle{plain}
\bibliography{refs}

\end{document}